\newtheorem{theorem}{Theorem}[section]
\theoremstyle{definition}
\newtheorem{proposition}[theorem]{Proposition}
\newtheorem{lemma}[theorem]{Lemma}
\newtheorem{definition}[theorem]{Definition}
\newtheorem{remark}[theorem]{Remark}
\newtheorem{corollary}[theorem]{Corollary}
\newtheorem{conjecture}[theorem]{Conjecture}
\newtheorem{example}[theorem]{Example}
\renewcommand{\setminus}{{\smallsetminus}}
\def\be{\begin{equation}}
\def\ee{\end{equation}}
\begin{document}

\title{Schmutz-Thurston Duality} 

\author{Ingrid Irmer}
\address{SUSTech International Center for Mathematics\\
Southern University of Science and Technology\\Shenzhen, China
}
\address{Department of Mathematics\\
Southern University of Science and Technology\\Shenzhen, China
}
\email{ingridmary@sustech.edu.cn}


\begin{abstract} Two people who pioneered the study of mapping class group-equivariant deformation retractions of Teichm\"uller space of closed compact surfaces are Schmutz Schaller and Thurston. This paper studies how the two different approaches are dual to each other.  
\end{abstract}

\maketitle

{\footnotesize
\tableofcontents
}


\section{Introduction}
\label{sec.intro}

The closed, connected topological surface of genus $g\geq 2$ will be denoted by $\mathcal{S}_{g}$, the Teichm\"uller space of $\mathcal{S}_{g}$ by $\mathcal{T}_{g}$ and the mapping class group of $\mathcal{S}_{g}$ by $\Gamma_{g}$. Thurston defined a $\Gamma_{g}$-equivariant deformation retraction of $\mathcal{T}_{g}$ onto the Thurston spine, $\mathcal{P}_{g}$, \cite{Thurston}, \cite{ThurstonSpineSurvey}, \cite{MS}. The latter consists of all the points in $\mathcal{T}_{g}$ at which the set of shortest geodesics, the systoles, cuts the surface into polygons. \\

Studying the Thurston spine is closely tied up with studying the systole function $f_{\mathrm{sys}}$ on $\mathcal{T}_{g}$. The systole function is the map from $\mathcal{T}_{g}$ to $\mathbb{R}_{+}$ whose value at any point is given by the length of the systoles of the corresponding marked hyperbolic surface. The systole function is known to be a topological Morse function \cite{Akrout}, \cite{SchmutzMorse}. Topological Morse functions have nondegenerate, isolated critical points and stable/unstable manifolds of critical points, \cite{Morse}. However, unlike the smooth case, even in the presence of a metric, the stable/unstable manifolds are not uniquely defined. One motivation for the results in this paper is to replace the stable/unstable manifolds with canonical objects, namely the Thurston spine and related sets of minima. The critical points of $f_{\mathrm{sys}}$ are all contained in $\mathcal{P}_{g}$, \cite{Thurston}. Technical terms, such as topological Morse function, will be defined in Section \ref{defns}.\\

The intuitive idea is that the Thurston spine contains a choice of unstable manifolds of the critical points of the systole function, whereas a choice of stable manifolds are given by corresponding sets of minima defined by Schmutz Schaller, \cite{SchmutzMorse}. The first part of this statement was proven in \cite{MS}, and one of the goals of this paper is to justify the second part of the statement. One of the ingredients, expressed in the next theorem, is that if $p$ is a local maximum of $f_{\mathrm{sys}}$ with set of systoles $C$, the ``faces'' of the set of minima $\mathrm{Min}(C)$ are labelled by sets of curves corresponding to the sets of curves realised as systoles on the strata adjacent to $p$. Polytopal regularity and equivalence classes are defined in Subsection \ref{backgroundforthm}.

\begin{theorem}
\label{onlyfaces}
Suppose $C$ is a set of curves for which $\mathrm{Min}(C)$ has polytopal regularity and $p$ is a local maximum of $f_{\mathrm{sys}}$ with set of systoles $C$. The sets of minima making up the faces of $\mathrm{Min}(C)$ are in 1-1 correspondence with equivalence classes of strata adjacent to $p$ in $\mathcal{P}_{g}$.
\end{theorem}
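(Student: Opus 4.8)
The plan is to show that the face lattice of $\mathrm{Min}(C)$ and the poset of strata of $\mathcal{P}_{g}$ incident to $p$ are two combinatorial shadows of the same piece of convex geometry: the polytope $P:=\mathrm{conv}\{\,d\ell_{\gamma}|_{p}:\gamma\in C\,\}\subset T_{p}^{*}\mathcal{T}_{g}$ of systole differentials at $p$, where $\ell_{\gamma}$ is the geodesic length function of $\gamma$. The first step is to extract the rigidity that makes this possible. Because $p$ is a local \emph{maximum} of $f_{\mathrm{sys}}$ and not merely a critical point, the systole differentials positively span $T_{p}^{*}\mathcal{T}_{g}$, so $0$ lies in the \emph{interior} of $P$: along any Weil--Petersson geodesic through $p$ whose initial velocity is annihilated by all $d\ell_{\gamma}|_{p}$, strict convexity of length functions (Wolpert) would force $f_{\mathrm{sys}}$ to increase, contradicting maximality; combined with Akrout's eutacticity criterion~\cite{Akrout} this yields $0\in\mathrm{int}(P)$, and in particular $P$ is full-dimensional with each $d\ell_{\gamma}|_{p}$ a vertex. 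This is the main place where the hypothesis that $p$ is a local maximum, rather than an arbitrary critical point, enters.

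Next I would describe $\mathcal{P}_{g}$ near $p$. For a tangent direction $v$, the curves of $C$ that stay systolic to first order along the real-analytic ray from $p$ in direction $v$ are exactly those in the exposed subset $C_{v}:=\{\gamma\in C:d\ell_{\gamma}|_{p}(v)=\min_{\gamma'\in C}d\ell_{\gamma'}|_{p}(v)\}$; since at $p$ there is a positive gap between the lengths of the curves of $C$ and of all other simple closed geodesics, the systole set of that ray is exactly $C_{v}$ for small positive time, and the ray meets $\mathcal{P}_{g}$ if and only if $C_{v}$ fills $\mathcal{S}_{g}$. A short lemma (analyticity of $\ell_{\gamma}$ together with the spectral gap at $p$) upgrades this first-order picture to an exact one. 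As $v$ varies, the subsets $C_{v}$ run over the vertex sets of the proper faces of $P$; hence the strata of $\mathcal{P}_{g}$ adjacent to $p$ are indexed by the filling faces of $P$, two adjacent strata being carried by one face of $P$ precisely when their systole sets coincide. I would then verify that this last relation is exactly the equivalence relation on strata defined in Subsection~\ref{backgroundforthm}.

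On the other side I would read off the face lattice of $\mathrm{Min}(C)$ from its defining properties together with the polytopal-regularity hypothesis: each proper face of $\mathrm{Min}(C)$ is itself a set of minima $\mathrm{Min}(C')$ with $C'\subsetneq C$ the subsystem that is active (systolic) along the relative interior of that face, boundedness of $\mathrm{Min}(C')$ forcing $C'$ to fill, and the normal cone of that face corresponding, under polar duality, to the face of $P$ spanned by $\{\,d\ell_{\gamma}|_{p}:\gamma\in C'\,\}$. The proof is then the chain of equivalences: $C'$ labels a face of $\mathrm{Min}(C)$ $\iff$ $C'$ fills and is eutactic on the substratum it cuts out $\iff$ $C'$ is the vertex set of a filling proper face of $P$ $\iff$ $C'$ labels an equivalence class of strata adjacent to $p$; the first equivalence feeds polytopal regularity into Akrout's criterion, and the last runs Akrout backwards from the eutactic data to reconstruct the stratum. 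Matching the two label maps and passing to stratum-equivalence classes produces the asserted bijection.

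I expect the real difficulty to be not the shape of this correspondence but its exactness: ruling out ``phantom'' faces of $\mathrm{Min}(C)$ whose active subsystem fails to fill (a compactness/properness argument for sets of minima of non-filling systems), and conversely showing that every filling proper face of $P$ really is realised as an exposed face of $\mathrm{Min}(C)$ of the expected dimension rather than being collapsed or merged with a neighbour. Both require globalising the first-order identification made at the single point $p$ to a statement about the whole cell $\mathrm{Min}(C)$, and this is exactly where the polytopal-regularity hypothesis must be used essentially, in combination with the Weil--Petersson (or earthquake) convexity of geodesic length functions, which guarantees that $\mathrm{Min}(C)$ and each $\mathrm{Min}(C')$ are honest convex cells whose incidences are determined by the differentials at $p$.
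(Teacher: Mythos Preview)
Your overall architecture---read both the face lattice of $\mathrm{Min}(C)$ and the strata adjacent to $p$ off the polytope $P=\mathrm{conv}\{d\ell_\gamma|_p\}$---is the same as the paper's, which works with the dual picture $\mathcal{D}(p)$ and its Voronoi-like partner $\mathcal{V}(p)$. The local description of strata via exposed faces of $P$ is essentially Remark~\ref{correspondencerem}, and you correctly locate the hard step: globalising the first-order picture at $p$ to all of $\mathrm{Min}(C)$.

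The gap is in your proposed resolution of that hard step. You assert that Weil--Petersson convexity of length functions makes $\mathrm{Min}(C)$ and each $\mathrm{Min}(C')$ ``honest convex cells whose incidences are determined by the differentials at $p$.'' Neither part of this is available. Sets of minima are images under $\phi$ of positive orthants; the paper explicitly allows them to be \emph{pinched} cells (Subsection~\ref{subminima}), not convex sets in any metric under discussion, and polytopal regularity is a hypothesis precisely because convexity of $L(A,C)$ does not by itself control the rank of $\{\nabla L(c)\}$ away from $p$. More seriously, nothing you have said rules out the scenario the paper spends most of its proof on: that for some $c''\in C\setminus C'$, the gradient $-\nabla L(c'')$ might wander into the convex hull of $\{-\nabla L(c):c\in C'\}$ somewhere on $\mathrm{Min}(C)$ far from $p$, which would destroy the correspondence between faces of $P$ and faces of $\mathrm{Min}(C)$. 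The paper's argument to exclude this (the ``claim'' in Subsection~\ref{proofthmonlyfaces}) is not a convexity argument at all; it builds flowcones in the auxiliary metric $\mathbf{g}$, analyses the $2$-dimensional locus $E(C(f))\cap\mathrm{Min}(C)$, introduces the ``wedge'' and ``cone of decrease'' pictures (Figures~\ref{thepolytope}--\ref{orangeconeb}), and uses polytopal regularity in a pointwise way to force monotonicity of the difference functions $L(c')-L(c_f)$ along a specific path $\beta$. Your proposal contains no substitute for this mechanism.

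Two smaller issues. First, on a face $\mathrm{Min}(C')$ the curves in $C'$ are \emph{eutactic}, not systolic; conflating the two breaks your chain of equivalences. Second, you omit the ``folding'' phenomenon (Subsection~\ref{backgroundforthm}): when the gradients labelling a facet of $\mathcal{D}(p)$ are linearly dependent, equality of lengths may hold only to first order, a single facet can split into several strata with the same tangent cone, and the equivalence relation on strata is designed to absorb exactly this. Without treating folding, the map from faces of $P$ to strata need not be well defined, let alone bijective onto equivalence classes.
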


It was shown in \cite{Akrout} that a critical point $p$ of $f_{\mathrm{sys}}$ is a point at which, for some set $C$,
\begin{itemize}
\item{$C$ is the set of systoles at $p$, and}
\item{$p$ is in $\mathrm{Min}(C)$.}
\end{itemize}
The purpose of the next theorem is to show that, for a critical point in a locally top-dimensional cell of $\mathcal{P}_{g}$, $\mathrm{Min}(C)$ has the right intersection properties with $\mathcal{P}_{g}$ to be considered a dual. The difficulty here is that, geometrically, $\mathcal{P}_{g}$ as well as the sets of minima can be quite pathological objects; these are not the unual intersections of embedded submanifolds. By construction, the pre-image of a point of $\mathcal{P}_{g}$ under Thurston's deformation retraction intersects $\mathcal{P}_{g}$ in a single point. Since Thurston's deformation retraction is constructed from an $f_{\mathrm{sys}}$-increasing flow, such a pre-image is a valid choice of stable manifold.

\begin{theorem}
\label{neededforduality}
Suppose $p$ is a critical point of $f_{\mathrm{sys}}$ with set of systoles $C$ and $T_{p}\mathrm{Min}(C)$ does not have any vectors in the tangent cone to $\mathcal{P}_{g}$. Then $\mathrm{Min}(C)$ is a cell with empty boundary. In addition, there is a homotopy of $\mathrm{Min}(C)$ to a cell obtained as the pre-image of $p$ under Thurston's equivariant deformation retraction of $\mathcal{T}_{g}$ onto $\mathcal{P}_{g}$. This homotopy fixes $p$ and keeps the points in the thin part of $\mathrm{Min}(C)$ in the thin part of $\mathcal{T}_{g}$.
\end{theorem}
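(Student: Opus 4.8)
The plan is to obtain the three conclusions of Theorem~\ref{neededforduality} separately: that $\mathrm{Min}(C)$ is contractible, that it has empty boundary, and that there is a homotopy onto the Thurston fibre carrying the stated constraints. Write $r\colon\mathcal{T}_{g}\to\mathcal{P}_{g}$ for Thurston's retraction. For contractibility I would lean on Weil--Petersson convexity of geodesic length functions (Wolpert): any finite maximum $\max_{\gamma\in C}\ell_{\gamma}$ is convex along Weil--Petersson geodesics, so the locus on which the lengths in $C$ are equal and their common value minimal is a sublevel set of a convex function, hence geodesically convex in the Weil--Petersson completion of $\mathcal{T}_{g}$, hence contractible by radial contraction toward $p\in\mathrm{Min}(C)$. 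The same convexity yields $\mathrm{Min}(C)\cap\mathcal{P}_{g}=\{p\}$: any second common point would be joined to $p$ by a geodesic inside $\mathrm{Min}(C)$ whose initial velocity lies in the tangent cone of $\mathcal{P}_{g}$ at $p$, which the hypothesis on $T_{p}\mathrm{Min}(C)$ excludes.

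To promote contractibility to ``cell with empty boundary'' I would appeal to Theorem~\ref{onlyfaces}: under its polytopal-regularity hypothesis the proper faces of $\mathrm{Min}(C)$ are in bijection with equivalence classes of strata of $\mathcal{P}_{g}$ adjacent to $p$. Each such face is itself a set of minima $\mathrm{Min}(C')$ for a proper subconfiguration $C'\subsetneq C$ realised as systoles on the adjacent stratum, and the direction in which this face departs from $p$ is, under the correspondence, a nonzero vector of $T_{p}\mathrm{Min}(C)$ lying in the tangent cone of that stratum and hence of $\mathcal{P}_{g}$. The hypothesis forbids this, so $\mathrm{Min}(C)$ has no proper faces; combined with polytopal regularity (a Euclidean neighbourhood at every point, with no intervening lower strata) this makes $\mathrm{Min}(C)$ a boundaryless topological manifold, which by the preceding paragraph is contractible and therefore a cell with empty boundary. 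The same bookkeeping pins its dimension to $\dim\mathcal{T}_{g}$ minus the dimension of the tangent cone to $\mathcal{P}_{g}$ at $p$, which is $\dim r^{-1}(p)$.

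For the homotopy, observe that $\mathrm{Min}(C)$ and $r^{-1}(p)$ are cells with empty boundary of the same dimension -- the latter by construction of $r$ from a gradient-like flow -- each containing $p$, each meeting $\mathcal{P}_{g}$ transversally there (the force of the tangent hypothesis), and each of codimension at least $3$ in $\mathcal{T}_{g}\cong\mathbb{R}^{6g-6}$, so each is unknotted. Abstractly an ambient isotopy of $\mathcal{T}_{g}$ carries one onto the other; the work is to choose it fixing $p$ and preserving thinness. Near $p$ I would match the two cells via the fact that they are transverse complements to $\mathcal{P}_{g}$ there; in the thick part I would flow $\mathrm{Min}(C)$ toward $r^{-1}(p)$ using Thurston's deformation retraction $R\colon\mathcal{T}_{g}\times[0,1]\to\mathcal{T}_{g}$ (which fixes $\mathcal{P}_{g}$, hence $p$, and contracts each of the two cells onto $p$); and in a neighbourhood of the thin part, where Thurston's $f_{\mathrm{sys}}$-increasing flow lengthens short curves and so cannot be used directly, I would interpolate in the Fenchel--Nielsen coordinates of the short curves, moving the twists and the non-short length parameters while leaving the short lengths fixed, so that thin points of $\mathrm{Min}(C)$ are sent to thin points of $r^{-1}(p)$. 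Splicing these three recipes produces the required homotopy.

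I expect the main obstacle to be concentrated in the last two paragraphs, and to be twofold. Since neither $\mathrm{Min}(C)$ nor $\mathcal{P}_{g}$ is an embedded submanifold, the transversality and tangent-cone statements above are not available from differential topology and must be wrung out of polytopal regularity and Theorem~\ref{onlyfaces}; in particular, verifying that a hypothetical boundary face of $\mathrm{Min}(C)$ genuinely contributes a tangent vector inside the tangent cone of $\mathcal{P}_{g}$ -- equivalently, that the combinatorial correspondence of Theorem~\ref{onlyfaces} is compatible with the first-order geometry at $p$ -- is the delicate point. And because $\mathrm{Min}(C)$ and $r^{-1}(p)$ are noncompact, the homotopy between them is only as good as one's asymptotic control of both sets near the boundary of $\mathcal{T}_{g}$: producing an interpolation in the thin part that at once fixes $p$, respects the cell structure, and never lengthens a short curve is where the real difficulty lies.
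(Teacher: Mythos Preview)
Your proposal contains a basic misidentification and two genuine gaps.

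First, your description of $\mathrm{Min}(C)$ as ``the locus on which the lengths in $C$ are equal and their common value minimal'' is not what $\mathrm{Min}(C)$ is. In the paper's terminology, that locus would be (a subset of) $E(C)$. The set $\mathrm{Min}(C)$ consists of the points where $C$ is eutactic, equivalently the minima of the length functions $L(A,C)$ as $A$ ranges over the positive orthant; curves in $C$ generally have different lengths at a point of $\mathrm{Min}(C)$. So your Weil--Petersson convexity argument for contractibility and for $\mathrm{Min}(C)\cap\mathcal{P}_{g}=\{p\}$ is not applied to the right object, and in any case the paper does not claim or use the set-theoretic intersection statement.

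Second, your argument for empty boundary appeals to Theorem~\ref{onlyfaces}, which is proven under the hypothesis of polytopal regularity --- an assumption that is \emph{not} made in Theorem~\ref{neededforduality}. The paper's proof explicitly notes that ``unlike in Subsection~\ref{backgroundforthm}, there are now no regularity assumptions being made.'' The paper argues directly: if $C'$ labels a facet of $\mathcal{D}(p)$ and $C'$ fills, then either the stratum $\mathrm{Sys}(C')$, or (if the facet is folded) the strata labelled by the folded faces, are adjacent to $p$ with filling systoles, producing a vector in $T_{p}\mathrm{Min}(C)$ inside the tangent cone to $\mathcal{P}_{g}$. Hence no facet $C'$ fills; the negative-gradient paths $\alpha$ run into the thin part, stay in $\mathrm{Min}(C)$ by Lemma~\ref{14} (using the metric $\mathbf{g}$), and $\mathrm{Min}(C)$ has empty boundary. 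From this the cell statement follows from Theorem~10 of \cite{SchmutzMorse}, not from a global convexity argument.

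Third, your homotopy sketch is too abstract to succeed and misses the content of the paper's argument. The codimension-$\geq 3$ unknotting claim is unjustified (nothing in the hypotheses bounds the index of $p$ away from $6g-9$), and the ``splicing'' of three recipes in the thin part is exactly the part that cannot be done by soft arguments. The paper instead constructs a concrete comparison cell $\mathcal{B}(p)$: a small ball $\mathcal{N}(p)\subset\mathrm{Min}(C)$ together with the pre-image of $\partial\mathcal{N}(p)$ under Thurston's retraction. The substantive step, using Proposition~\ref{29} and the structure of Thurston's open cover, is the claim that a Thurston flowline $\tilde{\alpha}$ starting in direction $-v_{C'}$ only passes through open sets labelled by curves contained in the subsurface of $\mathcal{S}_{g}$ filled by $C'$. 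This pins down the asymptotics of $\tilde{\alpha}$ in $\overline{\mathcal{T}}_{g}$: its endpoint lies in the same or an adjacent stratum to that of $\alpha$, so the homotopy between $\mathrm{Min}(C)$ and $\mathcal{B}(p)$ can be built by flowing along $-\nabla L(A(x),C)$ and only moves points within the thin part. Without this control on which curves can become short along the Thurston flowlines, there is no reason the two cells should match up asymptotically, and your Fenchel--Nielsen interpolation would have nothing to interpolate between.
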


At a critical point described by Theorem \ref{neededforduality}, the assumptions of the theorem together with Proposition 1.3 of \cite{MS} imply that $\mathcal{P}_{g}$ has a tangent space at $p$ of dimension equal to the codimension of $\mathrm{Min}(C)$. \\

In topology, stable and unstable manifolds of critical points are standard tools for constructing cell decompositions of manifolds. In the case of Teichm\"uller space of punctured surfaces, $\Gamma_{g}$-equivariant cell decompositions of a bordification of Teichm\"uller space were obtained in \cite{BE88}, \cite{Harer} and \cite{PennerComplex} or by using topological Morse functions in \cite{SchmutzCelldecomp2}. Sets of minima have many properties analogous to the cells of \cite{BE88}, \cite{Harer} and \cite{PennerComplex}. As discussed briefly at the end of Section \ref{constructingduals}, the closest puncture-free, marked point-free analogue of the cell decompositions of $\mathcal{T}_{g}$ appears to be a ``pinched cell decomposition'' using sets of minima.\\

In the process of proving Theorems \ref{onlyfaces} and \ref{neededforduality}, the sets of curves that become arbitrarily short on a set of minima were completely determined. This gives a puncture-free analogue of the screens of Penner-McShane \cite{PM} and of a construction due to Harer \cite{Harer}. As explained in Section \ref{horizonsec}, the thin parts of sets of minima have a combinatorial structure that can be identified with a subcomplex of the barycentric subdivision of Harvey's curve complex. By analogy with punctured surfaces, it would seem that the horizon map uniquely determines a set of minima. If so, this implies the index of a critical point can be computed combinatorially under fairly general assumptions. Previously, except when the set of systoles at the critical point does not contain any proper filling subsets (see Section 3 of \cite{MS}) this could only be done numerically.\\

\textbf{Outline of the paper.} This paper is a sequel to \cite{MS}, and relies on a a number of results about sets of minima, loci and the Thurston spine. These results are surveyed in Section \ref{defns} in an attempt at self-containment. Section \ref{horizonsec} defines the horizon map and explains how to calculate it. The horizon map determines the asymptotic properties of the combinatorial description of sets of minima discussed in Section \ref{twocases}. Theorem \ref{onlyfaces} is also proven in this Section. Finally, in Section \ref{dualitysec}, Theorem \ref{neededforduality} is proven as part of a discussion of what a dual to the Thurston spine should be, and how to construct duals from sets of minima. 



\section{Notation and background}
\label{defns}

The purpose of this section is to provide the important definitions and background that will be used throughout this paper. \\

$\mathcal{S}_{g}$ denotes a closed, orientable topological surface of genus $g\geq 2$, and $S_{g}$ will be used for the surface $\mathcal{S}_{g}$ endowed with a marked hyperbolic structure corresponding to a point in $\mathcal{T}_{g}$. The symbol $\Gamma_{g}$ will be used for the mapping class group of $\mathcal{S}_{g}$. Curves are simple, closed, nontrivial isotopy classes on $\mathcal{S}_{g}$. On $S_{g}$, a curve will also be thought of as the unique geodesic representative of its isotopy class. A curve will usually be denoted by a lower case $c$, whereas uppercase $C$ is used for a finite set of curves, with pairwise geometric intersection number at most one. The geometric intersection number of two curves is the minimum number of crossings, where this minimum is evaluated over pairs of representatives of the isotopy classes that intersect transversely.\\

A point in $\mathcal{T}_{g}$ representing a marked hyperbolic surface will usually be denoted by lowercase $x$, unless it is required to satisfy some properties that make it rigid, such as being a critical point of $f_{\mathrm{sys}}$, which will then be denoted by $p$.\\

A set of curves $C$ in $\mathcal{S}_{g}$ will be said to \textit{fill} the surface if the complement on $\mathcal{S}_{g}$ of a set of representatives in minimal position consists of a union of topological disks. Equivalently, $C$ fills $\mathcal{S}_{g}$ iff every curve on $\mathcal{S}_{g}$ has nonzero geometric intersection number with at least one curve in $C$.\\

The length of a curve $c$ is an analytic function $L(c)$ from $\mathcal{T}_{g}$ to $\mathbb{R}_{+}$ given by the length of the geodesic representative. The length of a curve is a special case of a so-called length function.

\begin{definition}[Length function]
A finite ordered set of curves $C=(c_{1}, \ldots, c_{n})$ together with a finite ordered set of real, positive weights $A=(a_{1}, \ldots, a_{k})$ define an analytic function $L(A,C):\mathcal{T}_{g}\rightarrow \mathbb{R}^{+}$ , called a length function, given by
\begin{equation*}
L(A,C)(x)\,=\, \sum_{j=1}^{k} a_{j}L(c_{j})(x)
\end{equation*}

\end{definition}
Length functions satisfy many convexity properties, \cite{Bestvina}, \cite{Kerckhoff} and \cite{Wolpert}. For example, they are strictly convex with respect to the Weil-Petersson metric.\\

The interior of a $k$-cell is homeomorphic to an open ball in $\mathbb{R}^{k}$. In the literature, usually a cell is assumed to be closed. However, when working with sets of minima, some of the boundary points of the open ball in $\mathbb{R}^{k}$ might be mapped to points in $\mathcal{T}_{g}$, while some might only be mapped to points in a bordification of $\mathcal{T}_{g}$. As $\mathcal{T}_{g}$ is open, cell decompositions are usualy only of some choice of bordification of $\mathcal{T}_{g}$. For this reason, cells in this paper may be open, closed or neither, depending on the context. In addition, the interior of all cells will be continuously differentiable. When studying homotopies of cells, for simplicity the same symbol will be used for the cell and the map of the ball into $\mathcal{T}_{g}$. Vertices, edges and faces of a cell will automatically refer to the vertices, edges and faces of the closure of the cell in $\mathcal{T}_{g}$.\\

The statement in Theorem \ref{neededforduality} that a cell has empty boundary means that any boundary points of the cell should be thought of as noded surfaces in the metric completion of $\mathcal{T}_{g}$ with respect to the Weil-Petersson metric.\\

$\mathrm{Sys}(C)$ will be used to denote set of points in $\mathcal{T}_{g}$ on which $C$ is the set of equal shortest curves (the set of systoles). Following Thurston, $\mathrm{Sys}(C)$ will also be called a stratum. This stratification of $\mathcal{T}_{g}$ extends to the completion of $\mathcal{T}_{g}$ with respect to the Weil-Petersson metric. The symbol $\mathrm{Sys}(m)^{\infty}$ will be used to denote the set of points in the Weil-Petersson completion of $\mathcal{T}_{g}$ corresponding to noded surfaces along which the multicurve $m$ has been pinched.\\

 The Thurston spine, $\mathcal{P}_{g}$, is the union of strata labelled by filling sets of curves. It follows from a theorem of Lojasiewicz, \cite{Lojasiewicz1964}, that there is a triangulation of $\mathcal{P}_{g}$ compatible with the stratification, i.e. it is possible to triangulate $\mathcal{P}_{g}$ in such a way that the interior of each simplex is contained in a single straum. As a cell complex in $\mathcal{T}_{g}$, it is not the case that the dimension of $\mathcal{P}_{g}$ is locally constant. A locally top-dimensional cell of $\mathcal{P}_{g}$ is therefore defined to be a cell, every point of which has a neighbourhod in $\mathcal{T}_{g}$ that does not intersect a larger dimensional cell of $\mathcal{P}_{g}$.\\

A set of systoles has the property that any two systoles in the set intersect in at most one point. This can be proven by contradiction; assume more than two points of intersection, and use hyperbolic geometry and cut and paste to construct a curve shorter than the systoles. \\

Unless otherwise stated, a set $C$ of curves will always be assumed have pairwise geometric intersection number at most 1. This is motivated by the fact that systoles also have this property.\\

It is known that for any fixed $l>0$, the number of curves on $S_{g}$ of length less than or equal to $l\in \mathbb{R}$ is finite, \cite{M}. This will be called \textit{local finiteness}. A point $x\in \mathrm{Sys}(C)$ therefore has a neighbourhood on which the systoles are all contained in the set $C$.\\

Another elementary property of strata is the following: If $\mathrm{Sys}(C'')$ is a stratum, and $\mathrm{Sys}(C')$ is on the boundary of $\mathrm{Sys}(C'')$, then $C''\subsetneq C'$. Loosely speaking, the dimension of $\mathrm{Sys}(C)$ goes down as the cardinality of $C$ goes up. If the curves in $C''$ fill $\mathcal{S}_{g}$, this is also true for any stratum on $\partial Sys(C'')$. It follows that $\mathcal{T}_{g}$ is closed.

\begin{definition}[Topological Morse function]
Let $M$ be a topological $n$-dimensional manifold. A continuous function $f:M\rightarrow \mathbb{R}$ is a topological Morse function if the points of $M$ are all either regular points or critical points. A regular point $p\in M$ is a point with an open neighbourhood $U$ in $M$, such that $U$ is a homeomorphic coordinate patch, with one of the coordinates on $U$ being the function $f$. For a critical point $p$, there is a neighbourhood $U$ of $p$, and an index, $k\in \mathbb{Z}$, $0\leq k\leq n$, such that $U$ is a homeomorphic coordinate patch with the coordinates $\{x_{1}, \ldots, x_{n}\}$, and in $U$, $f$ is given by the formula
\begin{equation*}
f(x)-f(p)=\sum_{i=1}^{i=n-k}x_{i}^{2} - \sum_{i-n-k+1}^{i=n}x_{i}^{2}
\end{equation*}
\end{definition}

An important example, \cite{Akrout}, of a topological Morse function is the systole function $f_{\mathrm{sys}}:\mathcal{T}_{g}\rightarrow \mathbb{R}_{+}$, that maps $x\in\mathcal{T}_{g}$ to the lengths of the systoles at $x$. A point at which there is only one systole is contained in an open stratum of dimension equal to the dimension of $\mathcal{T}_{g}$. It is not hard to show that all strata of dimension equal to the dimension of $\mathcal{T}_{g}$  are of this type. The systole function is smooth when restricted to a top-dimensional stratum, but in general it is only continuous.\\

A key result when discussing $\mathcal{P}_{g}$ is the following

\begin{theorem}[Proposition 1 of \cite{Thurston} and first proven in \cite{Bers}]
\label{Thurstonprop}
Suppose $C$ is a set of curves on $\mathcal{S}_{g}$ that do not fill. Then at any point $x$ of $\mathcal{T}_{g}$, there is a derivation in $T_{x}\mathcal{T}_{g}$ whose evaluation on every one of the functions $L(c_{i})$ for $c_{i}\in C$ is strictly positive. 
\end{theorem}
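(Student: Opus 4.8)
The plan is to prove the conclusion by a theorem of the alternative, so that the filling hypothesis enters only through a single, clean geometric fact. Fix $x\in\mathcal{T}_g$ and write $C=\{c_1,\dots,c_n\}$. The differentials $dL(c_1),\dots,dL(c_n)$ are finitely many covectors in the finite-dimensional space $T_x^{*}\mathcal{T}_g$, so Gordan's theorem of the alternative applies: either there is a derivation $v\in T_x\mathcal{T}_g$ with $dL(c_i)(v)>0$ for every $i$ — which is exactly what we want — or there are reals $a_1,\dots,a_n\geq 0$, not all zero, with $\sum_i a_i\,dL(c_i)=0$ in $T_x^{*}\mathcal{T}_g$. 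It therefore suffices to rule out the second possibility.

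So suppose $\sum_i a_i\,dL(c_i)=0$ at $x$ with all $a_i\geq 0$ and $\sum_i a_i>0$. Then $F:=\sum_i a_i L(c_i)$ is a length function in the sense of the definition above, and the supposition says precisely that $x$ is a critical point of $F$. Since length functions are (strictly) convex along Weil--Petersson geodesics \cite{Wolpert}, a critical point of $F$ is a global minimum. Now the non-filling hypothesis supplies, by the characterization recalled above, an essential simple closed curve $\alpha$ with $i(\alpha,c_i)=0$ for every $i$. The Fenchel--Nielsen twist flow $\{E_t^{\alpha}\}$ along $\alpha$ changes only the gluing along $\alpha$ and so preserves the length of every curve disjoint from $\alpha$ (and of $\alpha$ itself); hence $L(c_i)\circ E_t^{\alpha}=L(c_i)$ for all $i$ and all $t$, and therefore $F\circ E_t^{\alpha}=F$. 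Choosing any $t_0\neq 0$ and setting $y:=E_{t_0}^{\alpha}x$, we get $F(y)=F(x)$ while $y\neq x$ (the twist flow has no fixed points, e.g.\ because $L(\beta)\circ E_t^{\alpha}$ is unbounded in $t$ for any $\beta$ with $i(\alpha,\beta)>0$, and such $\beta$ exists since $g\geq 2$). Joining $x$ to $y$ by the Weil--Petersson geodesic $\sigma$ (which stays in $\mathcal{T}_g$ by geodesic convexity of the Weil--Petersson metric \cite{Wolpert}) and using strict convexity of $F$ along $\sigma$, we find $F<F(x)$ somewhere on $\sigma$, contradicting that $x$ minimizes $F$. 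This kills the second alternative and proves the statement.

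One should also check the small point that the curves $c_i$ with $a_i>0$ again form a non-filling set, so that the disjoint essential curve $\alpha$ is available; this is immediate since any subset of a non-filling set is non-filling. With that in hand the argument above needs no further input beyond the cited Weil--Petersson facts.

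The step I expect to carry the real weight is the passage from "a nonnegative relation among the $dL(c_i)$" to "an honest length function with a critical point at $x$," together with the use of strict Weil--Petersson convexity and geodesic convexity to force a contradiction with the $\alpha$-twist symmetry; everything else is bookkeeping. A more hands-on route, closer to Thurston's original reasoning, would instead construct the derivation $v$ directly as an infinitesimal deformation supported in a thin collar of $\alpha$, but verifying that one such deformation simultaneously increases all the $L(c_i)$ leads, via the Gardiner representation of the differentials $dL(c_i)$ and holomorphicity, back to the same theorem-of-the-alternative statement, so I would present the proof in the form above.
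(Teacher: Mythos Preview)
The paper does not supply its own proof of this theorem; it is quoted as background with attribution to \cite{Thurston} and \cite{Bers}. Your argument is correct. Gordan's alternative cleanly converts the existence question into ruling out a nontrivial nonnegative relation $\sum a_i\,dL(c_i)=0$, and the contradiction via strict Weil--Petersson convexity of $F=\sum a_i L(c_i)$ combined with its invariance under the Fenchel--Nielsen twist along a curve $\alpha$ disjoint from $C$ is watertight. The one nonobvious ingredient --- that any two points of $\mathcal{T}_g$ are joined by a Weil--Petersson geodesic lying in $\mathcal{T}_g$, despite incompleteness of the metric --- is indeed a theorem of Wolpert, so your citation is appropriate.

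Your route differs from the classical arguments of Bers and Thurston, which produce the derivation directly by an explicit deformation. A related direct argument, consonant with the paper's later use of Riera's formula, is simply to take $v=\nabla_{\mathrm{WP}}L(\alpha)$ for $\alpha$ disjoint from every $c_i$: Riera's computation gives $\langle\nabla L(\alpha),\nabla L(c_i)\rangle_{\mathrm{WP}}>0$ for each $i$, so $v$ works immediately. Your approach buys independence from any explicit pairing formula, needing only qualitative strict convexity of length functions; the cost is that it is nonconstructive. As you yourself observe at the end, the two viewpoints are the two sides of the same theorem-of-the-alternative coin, and in the context of this paper --- where the eutactic condition and sets of minima are exactly about such nonnegative relations among length gradients --- your formulation fits naturally.
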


Theorem \ref{Thurstonprop} implies that the critical points of $f_{\mathrm{sys}}$ are all contained in $\mathcal{P}_{g}$. It might seem initially surprising that the systoles can intersect at all. This is a consequence of Theorem \ref{Thurstonprop} and the existence of an upper bound, Bers' constant, on $f_{\mathrm{sys}}$, \cite{Bersconstant}. As all maxima of $f_{\mathrm{sys}}$ are realised in the thick part of $\mathcal{T}_{g}$, which is a compact modulo the action of $\Gamma_{g}$, the existence of an upper bound for the $\Gamma_{g}$-equivariant function $f_{\mathrm{sys}}$ is a consequence of continuity.\\

The $\delta$-thick part of $\mathcal{T}_{g}$, $\mathcal{T}_{g}^{\delta}$ is defined to be the set of all points of $\mathcal{T}_{g}$ on which $f_{\mathrm{sys}}$ is less than or equal to $\delta$. The complement of the $\delta$-thick part of $\mathcal{T}_{g}$ is the $\delta$-thin part. When $\delta$ is less than a constant $\epsilon_{M}$ known as the Margulis constant, it is a consequence of the collar lemma, see for example Lemma 13.6 of \cite{FandM}, that the systoles in the $\delta$-thick part are pairwise disjoint.\\

Suppose $\delta$ is less than or equal to the Margulis constant. Denote by $\mathcal{C}_{g}^{\circ}$ the first barycentric subdivision of Harvey's curve complex $\mathcal{C}_{g}$. \\

\begin{theorem}[Proven in \cite{Ivanov}]
\label{complexembedding}
$\mathcal{C}_{g}^{\circ}$ is $\Gamma_{g}$-equivariantly homotopy equivalent to $\partial\mathcal{T}_{g}^{\delta}$.
\end{theorem}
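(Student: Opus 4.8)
The plan is to identify $\partial\mathcal{T}_{g}^{\delta}$ with the nerve of an explicit $\Gamma_{g}$-invariant cover, up to $\Gamma_{g}$-equivariant homotopy. The first step is to replace $\partial\mathcal{T}_{g}^{\delta}=\{f_{\mathrm{sys}}=\delta\}$ by the region $\mathcal{T}_{g}^{\delta}=\{f_{\mathrm{sys}}\leq\delta\}$ that it bounds. This is legitimate because $f_{\mathrm{sys}}$ has no critical point with value $\leq\delta$: by Theorem \ref{Thurstonprop} every critical point lies in $\mathcal{P}_{g}$, so its systoles fill $\mathcal{S}_{g}$; but systoles of length at most $\delta\leq\epsilon_{M}$ are pairwise disjoint by the collar lemma, and a pairwise disjoint set of curves cannot fill a closed surface. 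Hence $\delta$ is a regular value, and $\partial\mathcal{T}_{g}^{\delta}$ is a $\Gamma_{g}$-equivariant deformation retract of $\mathcal{T}_{g}^{\delta}$: running a $\Gamma_{g}$-equivariant, $f_{\mathrm{sys}}$-increasing flow of the kind produced in \cite{Akrout} (assembled from Weil--Petersson gradients of the active length functions, hence natural under $\Gamma_{g}$) carries every point of $\mathcal{T}_{g}^{\delta}$ out to the level $\{f_{\mathrm{sys}}=\delta\}$ and fixes that level set. So it suffices to prove $\mathcal{T}_{g}^{\delta}\simeq_{\Gamma_{g}}\mathcal{C}_{g}^{\circ}$.

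Next I would cover $\mathcal{T}_{g}^{\delta}$ by the closed sets $A_{\gamma}:=\{x\,:\,L(\gamma)(x)\leq\delta\}$, one for each curve $\gamma$; the collection is $\Gamma_{g}$-invariant, it covers $\mathcal{T}_{g}^{\delta}$ since every $x$ with $f_{\mathrm{sys}}(x)\leq\delta$ has a systole lying in some $A_{\gamma}$, and it is locally finite because lengths of curves vary by a bounded factor under bounded Teichm\"uller displacement and only finitely many curves are short at a given point. The key combinatorial claim is that for distinct curves $\gamma_{0},\dots,\gamma_{k}$ the intersection $A_{\gamma_{0}}\cap\cdots\cap A_{\gamma_{k}}$ is non-empty precisely when the $\gamma_{i}$ are pairwise disjoint: one implication is again the collar lemma ($\delta\leq\epsilon_{M}$), and for the converse, if $\{\gamma_{i}\}$ is a multicurve one pinches it and makes all its components simultaneously shorter than $\delta$. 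Moreover, when non-empty this intersection is contractible: extending $\{\gamma_{0},\dots,\gamma_{k}\}$ to a pants decomposition and passing to the associated Fenchel--Nielsen coordinates presents $A_{\gamma_{0}}\cap\cdots\cap A_{\gamma_{k}}$ as a product of half-open intervals $(0,\delta]$ for the $\gamma_{i}$, open rays for the remaining pants-curve lengths, and lines for the twist parameters. Therefore the nerve of $\{A_{\gamma}\}$ is exactly Harvey's curve complex $\mathcal{C}_{g}$.

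It then remains to apply the nerve lemma and to recall the $\Gamma_{g}$-equivariant homeomorphism $|\mathcal{C}_{g}|\cong|\mathcal{C}_{g}^{\circ}|$ between $\mathcal{C}_{g}$ and its first barycentric subdivision. Since $\Gamma_{g}$ is infinite one cannot average a subordinate partition of unity, so to keep the equivalence equivariant I would run the nerve lemma through the homotopy colimit of the diagram $\sigma\mapsto A_{\sigma}$ of intersections, where both natural maps $\mathrm{hocolim}_{\sigma}A_{\sigma}\to\mathcal{T}_{g}^{\delta}$ and $\mathrm{hocolim}_{\sigma}A_{\sigma}\to\mathrm{hocolim}_{\sigma}\ast=|\mathcal{C}_{g}|$ are weak equivalences and are functorial in the cover, hence $\Gamma_{g}$-equivariant. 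I expect the main obstacle to be the first step: verifying that the $f_{\mathrm{sys}}$-increasing flow is complete enough to sweep all of $\mathcal{T}_{g}^{\delta}$ onto $\partial\mathcal{T}_{g}^{\delta}$, since flow lines reaching far into the thin part near a stratum $\mathrm{Sys}(m)^{\infty}$ must still converge to the level $\delta$. For this I would appeal to Wolpert's product-region description of the Weil--Petersson geometry near $\mathrm{Sys}(m)^{\infty}$, in which the relevant length functions behave like radial coordinates, so that the flow reaches $\{L(\gamma_{i})=\delta\}$ in finite time; the remaining closed-cover bookkeeping in the nerve lemma (local finiteness, or thickening each $A_{\gamma}$ to an open set $\{L(\gamma)<\delta'\}$ with $\delta'\leq\epsilon_{M}$) is routine.
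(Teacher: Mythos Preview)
The paper does not give its own proof of this statement; it is quoted verbatim as a result of Ivanov and used as a black box. So there is nothing in the paper to compare your argument against beyond the citation.

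That said, your sketch is essentially the standard argument one finds in Ivanov's work: cover the thin region by the sublevel sets $A_{\gamma}=\{L(\gamma)\le\delta\}$, use the collar lemma to see that the nerve is $\mathcal{C}_{g}$, use Fenchel--Nielsen coordinates to see that nonempty finite intersections are contractible, and invoke an equivariant nerve theorem. Your use of the homotopy colimit to secure $\Gamma_{g}$-equivariance is the right device here.

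The one place I would tighten is your Step~1. You want $\partial\mathcal{T}_{g}^{\delta}$ to be a $\Gamma_{g}$-equivariant deformation retract of the entire thin region $\mathcal{T}_{g}^{\delta}$, and you propose to achieve this with an $f_{\mathrm{sys}}$-increasing flow. The thin region is noncompact (it accumulates on the Weil--Petersson boundary strata), so you genuinely need to know that every flow line reaches the level $\{f_{\mathrm{sys}}=\delta\}$; this is exactly the ``main obstacle'' you flag, and Wolpert's expansion of the Weil--Petersson metric near the boundary does give the required finite-time estimate. A cleaner alternative, which avoids any flow-completeness issue, is to skip Step~1 entirely and run the nerve argument directly on $\partial\mathcal{T}_{g}^{\delta}$: the restricted cover $\{A_{\gamma}\cap\partial\mathcal{T}_{g}^{\delta}\}$ has the same nerve $\mathcal{C}_{g}$, and in the Fenchel--Nielsen box each nonempty intersection is the level set $\{\min_{i}L(\gamma_{i})=\delta\}$ inside a product of intervals, which is still contractible. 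Either route closes the argument.
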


The dimension of $\mathcal{C}_{g}$ is $3g-4$, which is quite a bit smaller than the dimension $6g-7$ of $\partial\mathcal{T}_{g}^{\delta}$. Despite this, in Section \ref{horizonsec}, a bordification of Schmutz Schaller's sets of minima (defined below) will be decribed in terms of subcomplexes of $\mathcal{C}_{g}$ or its barycentric subdivision $\mathcal{C}_{g}^{\circ}$. This is done by taking the intersection of the set of minima with $\partial \mathcal{T}_{g}^{\delta}$. In this intersection, the systoles are disjoint, so each set of systoles is a set of curves labelling a vertex of $\mathcal{C}_{g}^{\circ}$. As $\delta$ approaches zero, the induced stratification of the intersection of the set of minima with $\partial \mathcal{T}_{g}^{\delta}$ gives a subcomplex of $\mathcal{C}_{g}^{\circ}$.\\

When a metric is needed, it is often possible to assume any $\Gamma_{g}$-equivariant metric, such as the Weil-Petersson metric. The exception to this is when a metric $\mathbf{g}$, defined below, will be assumed when working with sets of minima.

\subsection{Loci and sets of minima}
\label{subminima}
This subsection will introduce Schmutz Schaller's sets of minima and state some of their basic properties. A reference is Section 2 of \cite{SchmutzMorse}. In addition, loci of sets of curves will be defined, and some basic properties from Section 3 of \cite{MS} will be surveyed.\\

A set $C$ of curves will be said to be \textit{eutactic} at $x\in \mathcal{T}_{g}$ if for every derivation $v\in T_{x}\mathcal{T}_{g}$ one of the following holds
\begin{itemize}
\item{$v(L(c_{i}))(x)>0$ and $v(L(c_{j}))(x)<0$ for some $c_{i}\neq c_{j}$ in $C$, or}
\item{$v(L(c_{i}))(x)=0$ for every $c_{i}$ in $C$.}
\end{itemize}

\begin{definition}[Set of minima $\mathrm{Min}(C)$ and $\partial\mathrm{Min}(C)$, \cite{SchmutzMorse}]
Fix a finite set $C$ of curves on $\mathcal{S}_{g}$. The set of minima, $\mathrm{Min}(C)$, is defined to be the set of all $x\in \mathcal{T}_{g}$ at which $C$ is eutactic. 
Alternatively, $\mathrm{Min}(C)$ can be defined to be the set of all points in $\mathcal{T}_{g}$ at which a length function $L(A,C)$ has its minimum for some $A\in \mathbb{R}_{+}^{|C|}$.
A point $x$ is in $\partial \mathrm{Min}(C)\cup \mathrm{Min}(C)$ iff there is no derivation $v$ in $T_{x}\mathcal{T}_{g}$ whose evaluation on $L(c)$ at $x$ is greater than zero for every $c$ in $C$.
\end{definition}

Equivalence of the two definitions of $\mathrm{Min}(C)$ is a consequence of the fact that a length function $L(A, C)$ is convex and hence has a minimum at $x$ iff the gradient at $x$ is zero.  

\begin{lemma}[Lemma 1 of \cite{SchmutzMorse}]
\label{lemma1}
When $C$ is a filling set of curves, for any fixed $A\in \mathbb{R}_{+}^{|C|}$ the length function $L(A, C)$ has a unique minimum in $\mathcal{T}_{g}$.
\end{lemma}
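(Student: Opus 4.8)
The plan is to get uniqueness from strict convexity and existence from properness. I would first record that $L(A,C)=\sum_{j} a_{j}L(c_{j})$ is, as a positive linear combination of curve-length functions, strictly convex along Weil--Petersson geodesics (the convexity results of Kerckhoff, Wolpert and Bestvina cited above), and that $\mathcal{T}_{g}$ is geodesically convex for the Weil--Petersson metric. If $L(A,C)$ had two distinct minima $x_{1}\neq x_{2}$, restricting to the unique Weil--Petersson geodesic joining them would give a strictly convex function on an interval attaining its minimum at both endpoints, which is impossible. So there is at most one minimum.

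For existence I would show $L(A,C)$ is proper on $\mathcal{T}_{g}$, i.e. every sublevel set $\{x\in\mathcal{T}_{g}:L(A,C)(x)\leq M\}$ is compact; a continuous proper function attains its infimum (pick $x_{0}$, apply this to $M=L(A,C)(x_{0})$). Such a sublevel set is closed by continuity of length functions and is contained in $K_{L_{0}}:=\{x\in\mathcal{T}_{g}:L(c_{j})(x)\leq L_{0}\ \text{for all}\ c_{j}\in C\}$ with $L_{0}=\max_{j}M/a_{j}$, so it suffices to show $K_{L_{0}}$ is compact. The first step is that $K_{L_{0}}$ stays away from the thin part: if a geodesic $\gamma$ on $x$ has length below the Margulis constant, the collar lemma gives it an embedded collar whose width grows like $\log\!\big(1/L(\gamma)(x)\big)$; since $C$ fills, some $c_{j}\in C$ has nonzero geometric intersection number with $\gamma$ and must cross this collar, forcing $L(c_{j})(x)$ to be comparable to the collar width. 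Hence $L(\gamma)(x)$ is bounded below in terms of $L_{0}$, so $K_{L_{0}}$ is contained in a fixed thick part $\{x:f_{\mathrm{sys}}(x)\geq \epsilon(L_{0})\}$.

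The remaining step upgrades ``thick'' to ``compact inside $\mathcal{T}_{g}$''. Given $x_{n}\in K_{L_{0}}$, Mumford compactness of the thick part of moduli space produces $\phi_{n}\in\Gamma_{g}$ and $y_{n}\to y_{\infty}$ in $\mathcal{T}_{g}$ with $x_{n}=\phi_{n}\cdot y_{n}$. Then $L(\phi_{n}^{-1}c_{j})(y_{n})=L(c_{j})(x_{n})\leq L_{0}$, and local finiteness near $y_{\infty}$ leaves only finitely many isotopy classes of length $\leq L_{0}+1$ on surfaces close to $y_{\infty}$, so after passing to a subsequence $\phi_{n}^{-1}C$ equals a fixed set $C'$ (necessarily filling). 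Because the setwise stabiliser in $\Gamma_{g}$ of a filling set of curves is finite, $\phi_{n}$ takes only finitely many values, and a further subsequence has $\phi_{n}$ constant, so $x_{n}$ converges in $\mathcal{T}_{g}$; thus $K_{L_{0}}$ is compact. I expect this final compactness argument to be the main obstacle: the collar lemma, Mumford compactness and finiteness of the stabiliser of a filling curve system must be combined, since none of them alone yields compactness of $K_{L_{0}}$ inside $\mathcal{T}_{g}$ rather than merely in $\mathcal{M}_{g}$.
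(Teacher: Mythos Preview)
The paper does not prove this lemma; it simply quotes it as Lemma~1 of \cite{SchmutzMorse}. So there is no in-paper proof to compare against, and the question is only whether your argument stands on its own.

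It does. Uniqueness via strict convexity of $L(A,C)$ along Weil--Petersson geodesics is exactly the mechanism the paper alludes to when it says length functions ``are strictly convex with respect to the Weil--Petersson metric''; the one nontrivial input you are silently using is that $\mathcal{T}_{g}$ is Weil--Petersson geodesically convex despite incompleteness, which is a theorem of Wolpert and is worth citing explicitly. Your existence argument is also correct: the collar lemma plus the filling hypothesis bounds $f_{\mathrm{sys}}$ from below on $K_{L_{0}}$, Mumford compactness gives convergence in moduli space, and the passage back to $\mathcal{T}_{g}$ via finiteness of the setwise stabiliser of a filling curve system is the right way to close the gap between compactness in $\mathcal{M}_{g}$ and in $\mathcal{T}_{g}$. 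The local-finiteness step (only finitely many isotopy classes of length $\leq L_{0}+1$ on surfaces near $y_{\infty}$) is precisely the fact the paper records from \cite{M}, and it holds uniformly on a compact neighbourhood of $y_{\infty}$ in the thick part, which is what you need.

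This is essentially the standard proof one would expect in Schmutz Schaller's original paper: convexity for uniqueness, and properness of weighted length functions of filling systems for existence. Your write-up is more detailed than what the cited lemma likely contains, but the strategy is the same.
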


Lemma \ref{lemma1} gives a surjective map $\phi:\mathbb{R}_{+}^{|C|}\rightarrow \mathrm{Min}(C)$, that takes a point $A=(a_{1}, \ldots, a_{|C|})$ to the unique point in $\mathcal{T}_{g}$ at which $L(A,C)$ has its minimum. It follows from Lemma \ref{lemma1} and Theorem \ref{Thurstonprop} that $\mathrm{Min}(C)$ is nonempty iff $C$ fills.\\

The pre-image of $\phi$ is the intersection of an affine subspace with the positive orthant of $\mathbb{R}^{|C|}$. It has dimension at least 1 in $\mathbb{R}_{+}^{|C|}$. For $x\in \mathrm{Min}(C)$, the dimension of $\phi^{-1}(x)$ is equal to 
\begin{equation*}
 |C|-\mathrm{dim}(\mathrm{Span}\{\nabla L(c)(x)\ |\ c\in C\})
 \end{equation*}

It was shown in \cite{SchmutzMorse} that $\phi$ can be defined on a larger set whose image contains $\partial \mathrm{Min}(C)\cup \mathrm{Min}(C)$. The image of this set is called $\mathrm{Conv}(C)$ and consists of all the points in $\mathcal{T}_{g}$ at which there exists a convex function (not necessarily a length function) obtained as a linear combination of $\{\nabla L(c)\ |\ c\in C\}$. An admissible boundary point is a point in the pre-image of $\partial \mathrm{Min}(C)$ under $\phi$. Denote by $A$ the set of admissible boundary points of $\mathbb{R}_{+}^{|C|}$.  \\

\begin{definition}[$C$-regularity]
 A point $x$ in $\mathrm{Min}(C)$ is called $C$-regular if there is a neighbourhood $U$ of $x$ in $\mathrm{Conv}(C)$ on which the dimension of pre-images of points in $U$ under $\phi$ is constant. 
\end{definition}

A set of minima $\mathrm{Min}(C)$ is a cell in $\mathcal{T}_{g}$ unless the dimension of pre-image of points under $\phi$ varies, i.e. unless $C$-regularity breaks down on $\mathrm{Min}(C)$. For this reason, $\mathrm{Min}(C)$ will be described as a pinched cell; this is either a cell or a cell that has been collapsed along pairwise disjoint properly embedded submanifolds. The dimension of $\mathrm{Min}(C)$ at a point $x$ is defined to be the dimension of the span of $\{\nabla L(c)(x)\ |\ c\in C\}$.\\

\begin{lemma}[Corollary 13 of \cite{SchmutzMorse}]
\label{regularitylem}
When $C$-regularity holds at every point of $\mathrm{Min}(C)$, $\mathrm{Min}(C)$ is a continuously differentiable submanifold of $\mathcal{T}_{g}$.
\end{lemma}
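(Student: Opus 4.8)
The plan is to work with the surjection $\phi:\mathbb{R}_{+}^{|C|}\to\mathrm{Min}(C)$ of Lemma~\ref{lemma1} and show that, under $C$-regularity, $\phi$ is a smooth map of \emph{locally constant rank}; the Constant Rank Theorem then identifies its image $\mathrm{Min}(C)$ with an embedded submanifold of $\mathcal{T}_{g}$. (If $C$ does not fill then $\mathrm{Min}(C)=\emptyset$ by Theorem~\ref{Thurstonprop}, so there is nothing to prove; assume $C$ fills, so that $\phi$ is defined on all of $\mathbb{R}_{+}^{|C|}$ with the uniqueness furnished by Lemma~\ref{lemma1}.)

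First I would upgrade $\phi$ from continuous to real-analytic and compute its differential. The point $x=\phi(A)$ is characterised by $d_{x}L(A,C)=0$. The map $(A,x)\mapsto d_{x}L(A,C)$ is real-analytic and affine in $A$, and its partial derivative in $x$ at a solution is the Hessian $\mathrm{Hess}\,L(A,C)(\phi(A))$, which is positive definite — hence invertible — since length functions are strictly convex for the Weil--Petersson metric (\cite{Bestvina},\cite{Kerckhoff},\cite{Wolpert}). The analytic Implicit Function Theorem then makes $\phi$ real-analytic (in particular $C^{1}$), and differentiating the identity $d_{x}L(A,C)(\phi(A))=0$ in $A$ gives
\begin{equation*}
d\phi_{A}\;=\;-\bigl(\mathrm{Hess}\,L(A,C)(\phi(A))\bigr)^{-1}\circ\Bigl((b_{c})_{c\in C}\mapsto\sum_{c\in C}b_{c}\,\nabla L(c)(\phi(A))\Bigr).
\end{equation*}
Because the Hessian factor is invertible, $\mathrm{rank}\,d\phi_{A}=\dim\mathrm{Span}\{\nabla L(c)(\phi(A))\mid c\in C\}$, which is exactly the dimension of $\mathrm{Min}(C)$ at $\phi(A)$ in the sense of Subsection~\ref{subminima}; note also that $\mathrm{image}(d\phi_{A})=\mathrm{Span}\{\nabla L(c)(\phi(A))\mid c\in C\}$ is independent of which $A\in\phi^{-1}(\phi(A))$ is chosen.

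Next I would feed in the hypothesis. By the formula $\dim\phi^{-1}(x)=|C|-\dim\mathrm{Span}\{\nabla L(c)(x)\mid c\in C\}$ recalled above, $C$-regularity at every point of $\mathrm{Min}(C)$ says precisely that $x\mapsto\dim\mathrm{Span}\{\nabla L(c)(x)\mid c\in C\}$ is locally constant on $\mathrm{Min}(C)$. Composing with the continuous surjection $\phi$ and using that $\mathbb{R}_{+}^{|C|}$ is connected, $\mathrm{rank}\,d\phi$ is \emph{globally} constant, say equal to $k$. The Constant Rank Theorem then supplies, near every $A_{0}$ and $x_{0}=\phi(A_{0})$, coordinates in which $\phi$ is a linear projection onto a $k$-plane; in particular $\phi$ is locally open onto its image and, near $x_{0}$, $\mathrm{Min}(C)$ is a $k$-dimensional embedded $C^{1}$ (indeed analytic) slice with tangent space $\mathrm{Span}\{\nabla L(c)(x_{0})\mid c\in C\}$.

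The step I expect to be the main obstacle is the passage from this \emph{local} picture to the global assertion that $\mathrm{Min}(C)$ is genuinely an embedded submanifold — i.e.\ ruling out that the slices obtained from different $A_{0}\in\phi^{-1}(x_{0})$ fail to coincide, or that $\mathrm{Min}(C)$ accumulates on itself (a constant-rank map need not have an embedded image in general). Here I would use two structural facts about $\mathrm{Min}(C)$: that it is locally closed in $\mathcal{T}_{g}$ — the set $\mathrm{Min}(C)\cup\partial\mathrm{Min}(C)=\{x:0\in\mathrm{conv}\{\nabla L(c)(x)\mid c\in C\}\}$ is closed, and $\partial\mathrm{Min}(C)$ is closed in it — and that $\phi$ is a topological quotient onto $\mathrm{Min}(C)$, which follows from its local openness together with the fact that its fibres are the relatively open convex subsets $\{A\in\mathbb{R}_{+}^{|C|}:\sum_{c}a_{c}\nabla L(c)(x)=0\}$ and vary continuously. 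Combining these, a sufficiently small neighbourhood $W$ of $x_{0}$ in $\mathcal{T}_{g}$ meets $\mathrm{Min}(C)$ in exactly the single slice through $x_{0}$, which is the required submanifold chart. I would finally remark that the argument in fact yields real-analyticity of $\mathrm{Min}(C)$, stronger than the $C^{1}$ asserted, and that it is exactly the failure of $C$-regularity that lets $\mathrm{rank}\,d\phi$ — hence the local dimension of $\mathrm{Min}(C)$ — jump, producing a ``pinched cell'' rather than a manifold.
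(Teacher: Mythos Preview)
The paper does not supply its own proof of this lemma; it is quoted as Corollary~13 of \cite{SchmutzMorse} and used as background, so there is nothing in the present paper to compare your argument against directly.

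On its own merits your approach is sound and is essentially what one expects the Schmutz argument to be: use the implicit characterisation $d_{x}L(A,C)\bigl(\phi(A)\bigr)=0$ together with strict Weil--Petersson convexity to make $\phi$ real-analytic, read off $\mathrm{rank}\,d\phi_{A}=\dim\mathrm{Span}\{\nabla L(c)(\phi(A))\}$, feed in $C$-regularity to get constant rank, and apply the Constant Rank Theorem. The local-to-global passage you flag is indeed the only delicate point. Your outline for it is reasonable; the two structural facts doing the work are (i) the fibres of $\phi$ are convex, hence connected, relatively open affine pieces of $\mathbb{R}_{+}^{|C|}$, and (ii) $\mathrm{Min}(C)$ is open in $\mathrm{Min}(C)\cup\partial\mathrm{Min}(C)$, since eutacticity --- $0$ lying in the \emph{relative interior} of the convex hull of the gradients --- is an open condition on $x$. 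Together with the local normal form from the Constant Rank Theorem these give a genuine submanifold chart at each point; if you wanted to tighten the exposition you could phrase the step as a Godement-type quotient criterion for $\mathbb{R}_{+}^{|C|}$ modulo the fibres of $\phi$, but what you have written is adequate. Your closing remark that the argument actually yields real-analyticity, not merely $C^{1}$, is correct.
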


A set of minima $\mathrm{Min}(C)$ not only has a boundary and a dimension, but as shown in Corollary 12 of \cite{SchmutzMorse}, it also has a tangent space at every point. This tangent space at $x\in \mathrm{Min}(C)$ is identified with the span of $\{\nabla L(c)(x)\ |\ c\in C\}$. 

\begin{lemma}
\label{themetric}
For every set of minima $\mathrm{Min}(C)$, there is a metric $\mathbf{g}$ on $\mathcal{T}_{g}$, possibly depending on $C$, for which the gradients $\{\nabla L(c)(x)\ |\ c\in C\}$ span the tangent space to $\mathrm{Min}(C)$ at every point $x$ in $\mathrm{Min}(C)$.
\end{lemma}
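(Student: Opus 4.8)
The plan is to exploit the fact that, at every point $x\in\mathrm{Min}(C)$, the gradients $\{\nabla L(c)(x)\mid c\in C\}$ already span a well-defined subspace of $T_x\mathcal{T}_g$ of dimension $\dim(\mathrm{Span}\{\nabla L(c)(x)\})$, and that this subspace has been identified (Corollary 12 of \cite{SchmutzMorse}, quoted just above) with the tangent space $T_x\mathrm{Min}(C)$. The only thing that can fail for an arbitrary $\Gamma_g$-equivariant metric such as Weil--Petersson is that $T_x\mathrm{Min}(C)$, viewed as the tangent space to the submanifold (or pinched cell), need not coincide with the span of the gradients computed with respect to that metric: the gradient of $L(c)$ depends on the metric, whereas the differential $dL(c)$ does not. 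So the strategy is to produce, for the fixed set $C$, a metric $\mathbf{g}$ with respect to which $\nabla^{\mathbf{g}} L(c)(x)$ lies in $T_x\mathrm{Min}(C)$ for every $c\in C$ and every $x\in\mathrm{Min}(C)$, and such that these gradients still span $T_x\mathrm{Min}(C)$.

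First I would recall that $\mathrm{Min}(C)$ is (possibly after removing the pinching locus where $C$-regularity fails) a continuously differentiable submanifold by Lemma \ref{regularitylem}, and in any case has a continuously varying tangent subspace $T_x\mathrm{Min}(C)\subseteq T_x\mathcal{T}_g$ of locally constant dimension on each $C$-regular piece. Fix any background $\Gamma_g$-equivariant metric $\mathbf{g}_0$ (say Weil--Petersson). At each $x\in\mathrm{Min}(C)$ write $T_x\mathcal{T}_g = T_x\mathrm{Min}(C)\oplus N_x$ for the $\mathbf{g}_0$-orthogonal complement $N_x$. The key observation is that the common kernel $\bigcap_{c\in C}\ker dL(c)(x)$ is precisely the $\mathbf{g}_0$-orthogonal complement of $\mathrm{Span}\{\nabla^{\mathbf{g}_0}L(c)(x)\}$, and by the eutaxy characterization of $\mathrm{Min}(C)$ this kernel is exactly the annihilator of $T_x\mathrm{Min}(C)$ under the pairing given by the differentials — equivalently, $T_x\mathrm{Min}(C)$ is the span of the $\mathbf{g}_0$-gradients already, on the $C$-regular locus. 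Thus on the $C$-regular part there is in fact nothing to prove for a sufficiently good background metric, and the content of the lemma is to handle the behaviour of the span across the whole of $\mathrm{Min}(C)$, including where $C$-regularity degenerates, in a way that is consistent along the pinching submanifolds.

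The main step is therefore the construction. I would define $\mathbf{g}$ by modifying $\mathbf{g}_0$ so that the distribution $x\mapsto T_x\mathrm{Min}(C)$ becomes $\mathbf{g}$-orthogonal to a fixed complementary distribution $x\mapsto N_x$ chosen to contain, for each $x$, a complement to $\bigcap_{c}\ker dL(c)(x)$ — concretely, declare $\mathbf{g} = \mathbf{g}_0$ on $T_x\mathrm{Min}(C)$, declare $\mathbf{g} = \mathbf{g}_0$ on $N_x$, and declare the two subspaces $\mathbf{g}$-orthogonal. Since $dL(c)(x)$ annihilates the common kernel and hence factors through the quotient $T_x\mathcal{T}_g\big/\bigcap_c\ker dL(c)(x)$, with this choice of $\mathbf{g}$ the gradient $\nabla^{\mathbf{g}}L(c)(x)$ automatically lands in $T_x\mathrm{Min}(C)$; and since the $dL(c)(x)$, $c\in C$, span the dual of $T_x\mathrm{Min}(C)/(\text{kernel})$ — this is the identification of Corollary 12 of \cite{SchmutzMorse} — the gradients span $T_x\mathrm{Min}(C)$, as required. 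Extend $\mathbf{g}$ off a neighbourhood of $\mathrm{Min}(C)$ arbitrarily (smoothly, and $\Gamma_g$-equivariantly by averaging over the stabilizer of $C$, which is finite since $C$ is a finite set of curves, then transporting over the orbit), which suffices because the statement only constrains $\mathbf{g}$ on $\mathrm{Min}(C)$ itself.

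The step I expect to be the main obstacle is the regularity of this construction along the pinching locus, i.e.\ checking that the assignment $x\mapsto(T_x\mathrm{Min}(C), N_x)$ and hence $\mathbf{g}$ can be made continuous (indeed $C^1$, to match the running convention that cell interiors are continuously differentiable) across points where the dimension of $\mathrm{Span}\{\nabla L(c)(x)\}$ jumps. On the $C$-regular strata this is automatic; at a degeneration point the dimension of the span drops, so nearby the span sits inside a limiting subspace, and one must choose $N_x$ to vary continuously through such a point — this is possible because $\bigcap_c\ker dL(c)(x)$ is upper semicontinuous in dimension and one may fix a single continuous complementary distribution of the maximal needed dimension throughout a neighbourhood, then let $\nabla^{\mathbf{g}}L(c)(x)$ collapse into the smaller span on its own accord. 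I would spell this out using a partition of unity subordinate to the Lojasiewicz-type triangulation of $\mathrm{Min}(C)$ compatible with the $C$-regular stratification, mirroring the way $\mathcal{P}_g$ is triangulated compatibly with its stratification as recalled in Section \ref{defns}.
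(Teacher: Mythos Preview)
Your overall strategy --- pick a splitting $T_x\mathcal{T}_g = T_x\mathrm{Min}(C)\oplus N_x$, declare the summands $\mathbf g$-orthogonal, and patch with a partition of unity --- is the paper's approach as well. The paper phrases it in terms of local coordinate charts adapted simultaneously to $\mathrm{Min}(C)$ and to the metric-independent subspace $\bigcap_c\ker dL(c)=\{\nabla L(c)\}^\perp$, and the main content of its proof is an explicit description (via an ``applecore'' picture) of how such coordinates can be chosen across a point where $C$-regularity fails, so that the dimension drop is absorbed into the coordinate block $x_{n-j+1},\dots,x_n$.

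However, your write-up contains a genuine error. You assert that on the $C$-regular locus the span of the $\mathbf g_0$-gradients already equals $T_x\mathrm{Min}(C)$, so that ``there is in fact nothing to prove for a sufficiently good background metric''. This is false. The common kernel $K=\bigcap_c\ker dL(c)(x)$ is metric-independent, and for \emph{every} metric the span of the gradients is $K^{\perp_{\mathbf g}}$; but the geometric tangent space $T_x\mathrm{Min}(C)$ is a \emph{specific} complement to $K$ (concretely, differentiating the defining equation $\sum a_i\,dL(c_i)|_{\phi(A)}=0$ shows it is $K^{\perp_{\mathrm{Hess}\,L(A,C)}}$), and two complements to $K$ of the same dimension need not coincide. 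Your appeal to eutaxy does not bridge this gap: eutaxy constrains the configuration of the $dL(c)$ at $x$, not the relation between $K$ and $T_x\mathrm{Min}(C)$. So the construction is genuinely needed already on the regular locus, not only at pinching points.

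Relatedly, your specification of $N_x$ as ``chosen to contain, for each $x$, a complement to $\bigcap_c\ker dL(c)(x)$'' is backwards and dimensionally impossible in general. The choice that makes your construction work is $N_x=K$ itself: since $T_x\mathrm{Min}(C)\cap K=0$ (by the positive-definiteness of the Hessian at the minimum), $K$ is a valid complement, and declaring $T_x\mathrm{Min}(C)\perp_{\mathbf g}K$ forces $K^{\perp_{\mathbf g}}=T_x\mathrm{Min}(C)$, hence $\nabla^{\mathbf g}L(c)(x)\in T_x\mathrm{Min}(C)$. With this correction your argument and the paper's coincide; the remaining issue --- continuity of the splitting across the pinching locus --- is precisely what the paper's coordinate construction handles, and your partition-of-unity sketch points in the same direction but is less explicit.
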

\begin{proof}
Once it is understood how to construct a suitable set of coordinates around points of $\mathrm{Min}(C)$ where $C$-regularity breaks down, a metric $\mathbf{g}$ can be constructed following the usual proof of the existence of a metric on a differentiable manifold using charts and partitions of unity. See for example \cite{Lee} for details.\\

Note that the orthogonal complement of $\{\nabla L(c)(x)\ |\ c\in C\}$, denoted $\{\nabla L(c)(x)\ |\ c\in C\}^{\perp}$, is independent of the choice of metric, as it can be characterised in terms of level sets of length functions.\\

\begin{figure}
\centering
\includegraphics[width=4cm]{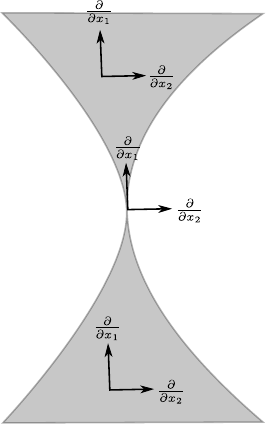}
\caption{A 2-dimensional illustration of how to construct coordinates near a point in $\mathrm{Min}(C)$ where regularity breaks down. The set of minima $\mathrm{Min}(C)$ is shaded.}
\label{applecore}
\end{figure}

A lower dimensional analogue of how to construct a set of coordinates around a point where regularity breaks down is illustrated in Figure \ref{applecore}. Suppose the dimension drops by $j>0$ at a point $x\in \mathrm{Min}(C)$ where regularity breaks down, and that the maximum dimension of $\mathrm{Min}(C)$ on a small neighbourhood $\mathcal{N}$ of $x$ is $n$. A chart on $\mathcal{N}$ and coordinates $x_{1}, \ldots, x_{n-j}, x_{n-j+1}, \ldots, x_{n}, x_{n+1}, \ldots, x_{6g-6}$ are chosen as follows: The vectors $\frac{\partial}{\partial x_{1}}, \ldots, \frac{\partial}{\partial x_{n}}$ span the tangent space to the image of $\mathcal{N}\subset \mathbb{R}^{6g-6}$ at points where the dimension is $n$. At points where the dimension drops, some or all of the vectors $\frac{\partial}{\partial x_{n-j+1}}, \ldots, \frac{\partial}{\partial x_{n}}$ are tangent to the image of $\{\nabla L(c)(x)\ |\ c\in C\}^{\perp}$ under the chart. The vectors $\frac{\partial}{\partial x_{n}}, \ldots, \frac{\partial}{\partial x_{6g-5}}$ are all tangent to the image of $\{\nabla L(c)(x)\ |\ c\in C\}^{\perp}$ under the chart.
\end{proof}

\begin{lemma}[Consequence of Lemma 14 and Theorem 15 of \cite{SchmutzMorse}]
\label{14}
If $x\in \mathcal{T}_{g}$ is in $\partial\mathrm{Min}(C)$, then $x\in \mathrm{Min}(C')$ for some $C'\subsetneq C$. If $C'$ is the largest subset of $C$ for which $x\in \mathrm{Min}(C')$ there exists a derivation $v\in T_{x}\mathcal{T}_{g}$ such that $vL(c)(x)=0$ for all $c\in C'$ and $vL(c)(x)<0$ for all $c\in C\setminus C'$.
\end{lemma}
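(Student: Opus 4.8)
The plan is to translate the eutaxy and boundary conditions into statements about the finite family of gradients $g_{c} := \nabla L(c)(x)$, $c \in C$ (so that a derivation $v$ acts by $vL(c)(x) = \langle g_{c}, v\rangle$), and then invoke standard theorems of the alternative. First I would set up the dictionary. From the length-function description of $\mathrm{Min}(C)$ together with convexity, $x \in \mathrm{Min}(C)$ holds iff there is a strictly positive $a \in \mathbb{R}_{+}^{|C|}$ with $\sum_{c \in C} a_{c} g_{c} = 0$. From the characterisation of $\partial\mathrm{Min}(C) \cup \mathrm{Min}(C)$ recalled in the excerpt (no derivation $v$ satisfies $vL(c)(x) > 0$ for all $c \in C$) together with Gordan's theorem, $x \in \partial\mathrm{Min}(C) \cup \mathrm{Min}(C)$ holds iff there is a nonzero $a \in \mathbb{R}_{\geq 0}^{|C|}$ with $\sum_{c \in C} a_{c} g_{c} = 0$. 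Consequently $x \in \partial\mathrm{Min}(C)$ precisely when the cone $\mathcal{A} := \{ a \in \mathbb{R}_{\geq 0}^{|C|} : \sum_{c} a_{c} g_{c}(x) = 0\}$ contains a nonzero vector but no strictly positive vector.

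Assume now $x \in \partial\mathrm{Min}(C)$, and let $C' \subseteq C$ be the union of the supports $\{c : a_{c} > 0\}$ of the vectors $a \in \mathcal{A}$. Since $C$ is finite, a nonnegative combination of finitely many elements of $\mathcal{A}$ realises this union, so $C' = \{ c : (a_{0})_{c} > 0\}$ for a single $a_{0} \in \mathcal{A}$, giving $\sum_{c \in C'} (a_{0})_{c} g_{c} = 0$ with all coefficients strictly positive. Since $\mathcal{A}$ contains a nonzero but no strictly positive vector, $C'$ is nonempty and $C' \subsetneq C$. If $C'$ failed to fill, Theorem \ref{Thurstonprop} would give a derivation $w$ with $\langle g_{c}, w\rangle > 0$ for all $c \in C'$, and pairing $w$ against $\sum_{c \in C'} (a_{0})_{c} g_{c} = 0$ would give $0 > 0$; hence $C'$ fills and $x \in \mathrm{Min}(C')$, which is the first assertion. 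Any $C'' \subseteq C$ with $x \in \mathrm{Min}(C'')$ yields, via the dictionary above, an element of $\mathcal{A}$ supported on $C''$, so $C'' \subseteq C'$; thus $C'$ is the largest subset of $C$ for which $x \in \mathrm{Min}(C')$.

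It remains to produce the derivation. I would look for $v \in T_{x}\mathcal{T}_{g}$ with $\langle g_{c}, v\rangle = 0$ for $c \in C'$ and $\langle g_{c}, v\rangle < 0$ for $c \in C \setminus C'$ (a nonempty set, as $C' \subsetneq C$). By Motzkin's transposition theorem, such a $v$ exists unless there are reals $(\lambda_{c})_{c \in C'}$ and nonnegative $(\mu_{c})_{c \in C \setminus C'}$, with the $\mu_{c}$ not all zero, such that $\sum_{c \in C'} \lambda_{c} g_{c} + \sum_{c \in C \setminus C'} \mu_{c} g_{c} = 0$. Given such a relation, I would add $t$ times the positive relation $\sum_{c \in C'} (a_{0})_{c} g_{c} = 0$ and pick $t$ large enough that $\lambda_{c} + t(a_{0})_{c} > 0$ for every $c \in C'$; the result is a vector of $\mathcal{A}$ whose support is $C'$ together with $\{ c \in C \setminus C' : \mu_{c} > 0\} \neq \emptyset$, contradicting the maximality of $C'$. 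Hence the required $v$ exists, and $vL(c)(x) = \langle g_{c}, v\rangle$ has the stated signs.

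I expect the real work to be bookkeeping rather than conceptual: checking that the union of supports is attained by one element of $\mathcal{A}$ (where finiteness of $C$ is used) and, more importantly, verifying via Theorem \ref{Thurstonprop} that $C'$ fills, so that ``$x \in \mathrm{Min}(C')$'' refers to a genuine nonempty set of minima and the maximality statement is about actual faces. Finally one should check that this matches Lemma 14 and Theorem 15 of \cite{SchmutzMorse} — a matter of reconciling conventions; those results go further and assemble the various $\mathrm{Min}(C')$ into the face poset of $\mathrm{Min}(C)$, but only the alternative-theorem argument above is needed for the statement as given.
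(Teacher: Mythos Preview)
Your argument is correct. The paper itself does not give a proof of this lemma; it is stated as a ``Consequence of Lemma 14 and Theorem 15 of \cite{SchmutzMorse}'' and then used as a black box. What you have written is a clean, self-contained derivation via standard theorems of the alternative (Gordan for the closure characterisation, Stiemke implicitly for the eutaxy $\Leftrightarrow$ strictly-positive-relation equivalence, Motzkin for producing the derivation $v$), together with the identification of the maximal $C'$ as the union of supports in the cone $\mathcal{A}$. Each step checks out: the support of a sum of nonnegative vectors realises the union of supports, the contradiction via Theorem~\ref{Thurstonprop} correctly rules out nonfilling $C'$ (though, as you note, the positive relation already gives eutaxy and hence $x\in\mathrm{Min}(C')$ directly; filling is then automatic), and the Motzkin alternative combined with the ``add $t\cdot a_{0}$'' trick cleanly forces the contradiction with maximality of $C'$.

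Since there is no proof in the paper to compare against, the only relevant remark is that your approach is exactly the linear-algebraic skeleton one would expect underlies Schmutz's original Lemma~14/Theorem~15; you have simply made the convex-geometry content explicit rather than quoting it.
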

\begin{remark}
\label{minfilling}
When $C$ is a filling set, but does not contain any proper subsets that fill, Lemma \ref{14} implies that $\mathrm{Min}(C)$ has empty boundary. Consequently, $\mathrm{Min}(C)$ cannot be pinched, and is therefore a continuously differentiable submanifold of $\mathcal{T}_{g}$ by Lemma \ref{regularitylem}. This is also a consequence of Lemmas 8 and 9 of \cite{SchmutzMorse}.
\end{remark}

In Lemma \ref{14}, the derivation $v$ is pointing into $\mathrm{Min}(C)$.\\

Some ideas that motivate the notion of Schmutz-Thurston duality are given in the next two theorems.

\begin{theorem}[Shown in \cite{SchmutzMorse} in the case of local maxima of $f_{\mathrm{sys}}$ and for general critical points in \cite{Akrout}]
\label{Akroutthm}
For any set $C'$, if $\mathrm{Min}(C')$ and $\mathrm{Sys}(C')$ intersect at all, the intersection is a unique point.
The point $p$ is a critical point of $f_{\mathrm{sys}}$ iff there exists a set $C$ such that $p$ is in $\mathrm{Min}(C)\cap\mathrm{Sys}(C)$.  
\end{theorem}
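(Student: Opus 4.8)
I would prove the two assertions separately. The uniqueness of $\mathrm{Min}(C')\cap\mathrm{Sys}(C')$ is elementary and self-contained, whereas the characterisation of critical points is the substantial part and rests on Akrout's analysis of $f_{\mathrm{sys}}$ as a pointwise minimum of analytic, Weil--Petersson strictly convex functions.

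\textbf{Uniqueness of the intersection.} Let $x,y\in\mathrm{Min}(C')\cap\mathrm{Sys}(C')$. Since $\mathrm{Min}(C')\neq\emptyset$ the set $C'$ fills, so Lemma \ref{lemma1} applies; pick $A\in\mathbb{R}_{+}^{|C'|}$ so that $x$ is the unique minimum of $L(A,C')$. Every curve of $C'$ has length $f_{\mathrm{sys}}$ on the stratum $\mathrm{Sys}(C')$, so there $L(A,C')$ coincides with $\bigl(\sum_{j}a_{j}\bigr)f_{\mathrm{sys}}$; minimality of $x$ then gives $f_{\mathrm{sys}}(x)\leq f_{\mathrm{sys}}(y)$, and the symmetric argument with a weight vector adapted to $y$ gives the reverse inequality. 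Hence $L(A,C')(y)=L(A,C')(x)=\min L(A,C')$, and uniqueness in Lemma \ref{lemma1} forces $x=y$.

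\textbf{Critical points versus eutaxy.} Let $p\in\mathcal{T}_{g}$ and let $C$ be its set of systoles, so $p\in\mathrm{Sys}(C)$; conversely any set $C$ with $p\in\mathrm{Sys}(C)$ is exactly this systole set, and by local finiteness $f_{\mathrm{sys}}=\min_{c\in C}L(c)$ near $p$. I would first record the convex-geometric reformulation: by the definition of eutaxy, $C$ is eutactic at $p$ exactly when $0$ lies in the relative interior of the convex hull of $\{\nabla L(c)(p):c\in C\}$, equivalently when $\sum_{c\in C}a_{c}\nabla L(c)(p)=0$ for some strictly positive weights $a_{c}$; and since $L(A,C)$ is convex, this says precisely that $L(A,C)$ is minimised at $p$, i.e. $p\in\mathrm{Min}(C)$. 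Thus ``$p\in\mathrm{Min}(C)\cap\mathrm{Sys}(C)$ for some $C$'' is equivalent to ``$C$ is eutactic at $p$'', and the theorem reduces to: $p$ is a critical point of $f_{\mathrm{sys}}$ if and only if its systole set is eutactic at $p$. For the contrapositive of one direction, suppose $C$ is not eutactic at $p$. If $C$ does not fill, Theorem \ref{Thurstonprop} produces $v\in T_{p}\mathcal{T}_{g}$ with $vL(c)(p)>0$ for all $c\in C$; otherwise there is $v$ with $vL(c)(p)\geq 0$ for all $c\in C$ and $vL(c_{0})(p)>0$ for some $c_{0}$, and then along the geodesic through $p$ in direction $v$ every $L(c)$ with $vL(c)(p)>0$ increases to first order while strict convexity forces each remaining $L(c)$ (which has vanishing first derivative there) to have a strict local minimum at $p$, so it too increases away from $p$. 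In both cases $f_{\mathrm{sys}}=\min_{c\in C}L(c)$ strictly increases along one direction through $p$ and, reversing it, strictly decreases along the opposite one; Akrout's argument then promotes this, using strict convexity uniformly near $p$, to a homeomorphic chart in which $f_{\mathrm{sys}}$ is one of the coordinates, so $p$ is regular. Conversely, if $C$ is eutactic at $p$, the same discussion run in reverse shows no direction strictly increases $f_{\mathrm{sys}}$ near $p$, and Akrout (and, for local maxima, Schmutz Schaller) verifies that $f_{\mathrm{sys}}$ has the quadratic normal form at $p$, with index the dimension count dictated by the eutactic configuration, so $p$ is critical.

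\textbf{Main obstacle.} The difficulty is entirely in promoting first-order data --- one increasing and one decreasing direction at a non-eutactic point, or the absence of an increasing direction at a eutactic one --- to the uniform statement demanded by the definition of a topological Morse function: a homeomorphic chart carrying $f_{\mathrm{sys}}$ to a coordinate (regular case) or to the normal form $\sum x_{i}^{2}-\sum x_{j}^{2}$ (critical case), compatibly with the stratification of $\mathcal{T}_{g}$. Producing the directions is easy from strict convexity of length functions and Theorem \ref{Thurstonprop}; constructing the chart is exactly the content of Akrout's theorem, and it is the strict convexity of the Weil--Petersson metric that makes it go through, the statement failing for minima of merely affine functions. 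When $p$ is a local maximum of $f_{\mathrm{sys}}$, the case handled by Schmutz Schaller, this whole issue evaporates: a single direction of strict increase, obtained directly from strict convexity, already contradicts maximality.
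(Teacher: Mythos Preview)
The paper does not prove this theorem: it is quoted as background, with the attribution ``Shown in \cite{SchmutzMorse} in the case of local maxima of $f_{\mathrm{sys}}$ and for general critical points in \cite{Akrout}'', and no argument is supplied. So there is no in-paper proof to compare against; your sketch is essentially an outline of Akrout's own argument, which is the intended reference.

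On the content of your sketch: the uniqueness argument is clean and correct. Since $x,y\in\mathrm{Sys}(C')$ forces $L(A,C')=(\sum a_j)f_{\mathrm{sys}}$ there, global minimality of $L(A,C')$ at $x$ gives $f_{\mathrm{sys}}(x)\le f_{\mathrm{sys}}(y)$, the symmetric bound gives equality, and then uniqueness of the minimiser in Lemma~\ref{lemma1} finishes it. This is the standard proof.

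There is one slip in the eutactic direction. You write that when $C$ is eutactic at $p$, ``no direction strictly increases $f_{\mathrm{sys}}$ near $p$''. That is false in general: eutaxy allows the alternative $vL(c)(p)=0$ for every $c\in C$, and along such a $v$ strict Weil--Petersson convexity makes every $L(c)$ increase, hence $f_{\mathrm{sys}}$ increases. These are exactly the directions spanning $\{\nabla L(c)(p):c\in C\}^{\perp}$, i.e.\ the unstable directions of the critical point, and they are present whenever $p$ is not a local maximum. Your conclusion that $p$ is critical is still right, and you correctly defer the chart construction to Akrout, but the heuristic you give for why is not accurate; the point is rather that in every direction either some $L(c)$ decreases to first order or all $L(c)$ increase to second order, and it is this dichotomy that Akrout upgrades to the quadratic normal form. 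Apart from this, your identification of the main obstacle---manufacturing the homeomorphic chart from the first-order convex-geometric data---is exactly right, and is precisely what the cited references supply.
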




\begin{theorem}[Theorems 1.1 and 1.2 and Proposition 1.3 of \cite{MS}]
\label{MSthm}
There is a $\Gamma_{g}$-equivariant deformation retraction of $\mathcal{P}_{g}$ onto a complex $\mathcal{P}^{X}_{g}$ consisting of a choice of the unstable manifolds of the critical points of $f_{\mathrm{sys}}$. At a critical point $p\in \mathrm{Sys}(C)$, the tangent space to the unstable manifold is given by $\{\nabla L(c)(p)\ |\ c\in C\}^{\perp}$.
\end{theorem}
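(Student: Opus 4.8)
The plan is to run topological Morse theory for $f_{\mathrm{sys}}$, using Akrout's theorem that $f_{\mathrm{sys}}$ is a topological Morse function together with Theorem \ref{Thurstonprop}, which forces every critical point of $f_{\mathrm{sys}}$ into $\mathcal{P}_{g}$. Three things have to be produced: (a) an equivariant deformation retraction of $\mathcal{T}_{g}$ onto $\mathcal{P}_{g}$; (b) an equivariant deformation retraction of $\mathcal{P}_{g}$ onto a complex $\mathcal{P}^{X}_{g}$ assembled from unstable manifolds of the critical points; and (c) the identification of the tangent space to each such unstable manifold at its critical point.

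For (a) and (b) I would first build a $\Gamma_{g}$-equivariant, $f_{\mathrm{sys}}$-increasing ``gradient-like'' flow. At a point $x\notin\mathcal{P}_{g}$ the systoles do not fill, so Theorem \ref{Thurstonprop} supplies a direction on which every systole length function strictly increases, and local finiteness lets such directions be chosen consistently; integrating recovers Thurston's retraction of $\mathcal{T}_{g}$ onto $\mathcal{P}_{g}$. Inside $\mathcal{P}_{g}$, where $f_{\mathrm{sys}}$ is smooth on each stratum but only continuous across strata, one assembles a compatible gradient-like flow using the convexity of the length functions as in \cite{Akrout}, patched by a $\Gamma_{g}$-equivariant partition of unity subordinate to a cover adapted to the stratification. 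A Morse-theoretic argument organizing $\mathcal{P}_{g}$ by the critical values of $f_{\mathrm{sys}}$ (this is the technical core of \cite{MS}) then yields a deformation retraction of $\mathcal{P}_{g}$ onto the union $\mathcal{P}^{X}_{g}$ of the compactified unstable manifolds $W^{u}(p)$, where ``unstable'' is with respect to the $f_{\mathrm{sys}}$-increasing flow, so that $T_{p}W^{u}(p)$ is the span of the ascending directions of $f_{\mathrm{sys}}$ at $p$; equivariance is inherited from the flow.

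For (c), fix a critical point $p\in\mathrm{Sys}(C)$. By Theorem \ref{Akroutthm}, $p\in\mathrm{Min}(C)$, so there are weights $a_{c}>0$ with $\sum_{c\in C}a_{c}\nabla L(c)(p)=0$; recall also that $\{\nabla L(c)(p)\,|\,c\in C\}^{\perp}=\{v\in T_{p}\mathcal{T}_{g}\,|\,v(L(c))(p)=0\ \text{for all}\ c\in C\}$, independently of the metric. For $v\in T_{p}\mathcal{T}_{g}$, local finiteness gives $f_{\mathrm{sys}}=\min_{c\in C}L(c)$ near $p$, so the one-sided derivative of $f_{\mathrm{sys}}$ at $p$ along $v$ is $\min_{c\in C}v(L(c))(p)$; since $\sum_{c}a_{c}\,v(L(c))(p)=0$ with all $a_{c}>0$, this is strictly negative as soon as $v\notin\{\nabla L(c)(p)\,|\,c\in C\}^{\perp}$, and likewise for $-v$. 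Hence along any line through $p$ transverse to $\{\nabla L(c)(p)\,|\,c\in C\}^{\perp}$ the function $f_{\mathrm{sys}}$ has a strict local maximum at $p$. On the other hand, for $v\in\{\nabla L(c)(p)\,|\,c\in C\}^{\perp}$ each $L(c)$ is strictly convex along the Weil--Petersson geodesic through $p$ tangent to $v$ and has vanishing first derivative there, hence strictly increases on both sides of $p$, and therefore so does $f_{\mathrm{sys}}=\min_{c\in C}L(c)$. Comparing these two facts with the topological Morse normal form of $f_{\mathrm{sys}}$ at $p$ identifies the ascending subspace, hence $T_{p}W^{u}(p)$, with $\{\nabla L(c)(p)\,|\,c\in C\}^{\perp}$; in passing, the Morse index of $p$ equals $\dim\mathrm{Span}\{\nabla L(c)(p)\,|\,c\in C\}$.

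The main obstacle is (b). Because $f_{\mathrm{sys}}$ is only a topological Morse function and $\mathcal{P}_{g}$ is a genuinely singular stratified subset of $\mathcal{T}_{g}$ whose local dimension is not constant, smooth Morse--Smale theory does not apply, and the gradient-like flow must be simultaneously $\Gamma_{g}$-equivariant, compatible with the stratification, and in the correct topological normal form at each critical point. A secondary difficulty is compactifying the $W^{u}(p)$ into genuine cells that assemble into a subcomplex, which requires understanding the flow lines between the critical points of $f_{\mathrm{sys}}$ restricted to the various strata of $\mathcal{P}_{g}$, not just the critical points of $f_{\mathrm{sys}}$ on $\mathcal{T}_{g}$ itself.
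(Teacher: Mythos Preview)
This theorem is not proved in the present paper at all: it is quoted verbatim as ``Theorems 1.1 and 1.2 and Proposition 1.3 of \cite{MS}'' and used as background input. There is therefore no proof in the paper to compare your proposal against.

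Evaluating your sketch on its own terms: your argument for (c) is essentially correct and is in fact the standard one. The eutactic condition at $p$ (equivalently, the existence of positive weights $a_{c}$ with $\sum a_{c}\nabla L(c)(p)=0$) forces the one-sided derivative $\min_{c\in C} v(L(c))(p)$ to be strictly negative whenever $v$ is not in $\{\nabla L(c)(p)\}^{\perp}$, and Wolpert's strict Weil--Petersson convexity of length functions then handles the orthogonal directions. This cleanly identifies the ascending subspace in the topological Morse chart, and with it the index. One small point you might tighten: you pass from ``ascending subspace in the Morse chart'' to ``$T_{p}W^{u}(p)$'' without comment, but since the unstable manifold is defined via the flow $X$ rather than the chart, you implicitly need that $X$ can be chosen to agree to first order with the Morse normal form near each critical point. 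This is exactly the sort of compatibility condition that has to be built into the construction of $X$ in part (b).

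For (b) you correctly flag the genuine content and the genuine difficulty --- a gradient-like flow on a stratified, non-equidimensional set, equivariant and in normal form at each critical point, whose unstable sets compactify to cells --- and then defer it to \cite{MS}. That is an honest assessment, but it means your proposal is an outline rather than a proof: the statement that the forward orbits of $X$ assemble into a subcomplex onto which $\mathcal{P}_{g}$ retracts is precisely what requires work, and nothing in your sketch supplies it. If you want to push further without simply reading \cite{MS}, the two concrete sub-problems are (i) showing that on each stratum $\mathrm{Sys}(C')\subset\mathcal{P}_{g}$ the vector $v_{C'}$ in the convex hull of $\{\nabla L(c)\mid c\in C'\}$ can be chosen to glue across strata boundaries, and (ii) controlling how flowlines of $X$ can enter lower-dimensional strata (the paper hints at this later when it discusses ``flaps'').
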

Examples studied in \cite{Ni} demonstrate that, for some values of $g$, $\mathcal{P}_{g}$ contains points not in $\mathcal{P}^{X}_{g}$.\\

\textbf{Loci.} Fix a set of curves $C=\{c_{1}, \ldots, c_{n}\}$ and a tuple $d=(d_{1}, \ldots, d_{n})$ in $\mathbb{R}^{n}$. The locus $E(C,d)$ is given by
\begin{equation*}
\{x\in \mathcal{T}_{g}\ |\ L(c_{i})(x)+d_{i}=L(c_{j})(x)+d_{j}\  \forall i,j\in 1,\ldots,n\}
\end{equation*}
The most important example of a locus has $d=(0, \ldots, 0)$, and will be written $E(C)$. As for $\mathcal{P}_{g}$, it follows from a theorem due to Lojasiewicz, \cite{Lojasiewicz1964}, that $E(C,d)$ can be triangulated. A stratum $\mathrm{Sys}(C)$ is a subset of $E(C)$.\\

For an object such as a stratum or locus that can be triangulated, tangent cones are defined in analogy with polyhedra. The \textit{tangent cone} to $E(C,d)$ at a point $x\in E(C,d)$ is the set of $v\in T_{x}\mathcal{T}_{g}$ such that $v=\dot{\gamma}(0)$ for a smooth oriented path with $\gamma(0)=x$ and $\gamma(t)$ contained in $E(C,d)$ for sufficiently small $t>0$. The tangent cone to a stratum is defined analogously. The tangent cone to $\mathcal{P}_{g}$ at $x\in \mathcal{P}_{g}$ is the union of the tangent cones of the strata in $\mathcal{P}_{g}$ incident on $x$.\\

A generalised minimum of the set $C$ of curves will now be defined. This is related to -- but should not be confused with -- a point in $\mathrm{Min}(C)$. For any $x\in \mathcal{T}_{g}$, there is a $d(x)$ such that $E(C,d(x))$ contains the point $x$. The point $x$ will be called a \textit{generalised minimum of} $C$ if $L(c)|_{E(C,d(x))}$ has a local minimum at $x$ for some (and hence every) $c\in C$.\\

A point in $\mathrm{Min}(C)$ is a generalised minimum of $C$, as are points of $\mathrm{Conv}(C)$. It is not the case that for a fixed set $C$, every minimum of $C$ is in either $\mathrm{Min}(C)$ or $\mathrm{Conv}(C)$. For example, when $C$ is a nonfilling set, both $\mathrm{Min}(C)$ and $\mathrm{Conv}(C)$ are empty. In contrast, when $C$ is the nonfilling set of curves in Figure \ref{nonfillingminima}, there are generalised minima of $C$. In this example, the generalised minima occur at points where the gradient of the length of the separating curve is linearly dependent on the gradients of the lengths of the nonseparating curves. \\

\begin{figure}
\centering
\includegraphics[width=4cm]{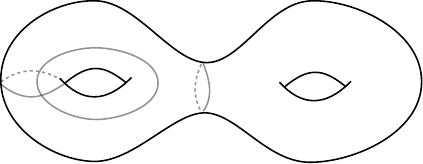}
\caption{When $C$ is the set of curves shown in grey, $C$ has generalised minima.}
\label{nonfillingminima}
\end{figure}


There are different ways of extending Lemma \ref{regularitylem} to the larger set consisting of the generalised minima of $C$. The next lemma is a simple statement along these lines. 

\begin{lemma}
\label{differentminima}
Suppose $C$ is a set of filling curves for which $\{\nabla L(c)\ |\ c\in C\}$ are linearly independent except at generalised minima of $C$, and that the dimension of the span of $\{\nabla L(c)\ |\ c\in C\}$ drops by one at the generalised minima of $C$. All the points realised as generalised minima of $C$ make up a smooth embedded submanifold of $\mathcal{T}_{g}$.
\end{lemma}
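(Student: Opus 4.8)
The plan is to reduce Lemma~\ref{differentminima} to a standard "clean collection of submanifolds that glue to a submanifold" argument, using the fact that the set of generalised minima of $C$ is locally the image of the map $\phi$ (extended to $\mathrm{Conv}(C)$) together with the closely related charts constructed in the proof of Lemma~\ref{themetric}. First I would fix a generalised minimum $x_0$ of $C$ and work in a small neighbourhood $\mathcal{N}$ of $x_0$ in $\mathcal{T}_g$. By hypothesis, the gradients $\{\nabla L(c)\ |\ c\in C\}$ are linearly independent away from the generalised minima, and their span drops in dimension by exactly one along the generalised-minimum set; call this set $M_0$ near $x_0$. Away from $M_0$, $\mathrm{Min}(C)$ (equivalently $\mathrm{Conv}(C)$, since we are only looking at generalised minima and the distinction is about filling subsets which do not enter here) is a $C$-regular set and hence, by Lemma~\ref{regularitylem}, a continuously differentiable submanifold of dimension $|C|$. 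So the only thing to check is what happens along $M_0$.

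Next I would analyse $M_0$ directly. A point $x$ is a generalised minimum of $C$ iff $L(c)|_{E(C,d(x))}$ has a local minimum at $x$; since the tangent space to $E(C,d(x))$ at $x$ is the common kernel of the differences $dL(c_i)-dL(c_j)$ and the functions $L(c)$ restricted there agree up to constants, this is the condition that the common restricted function has vanishing gradient, i.e. that $\nabla L(c_1)(x)$ lies in the span of $\{\nabla L(c_i)(x)-\nabla L(c_j)(x)\}$, which under the drop-by-one hypothesis is exactly one linear condition of the form $\sum_i \lambda_i(x)\nabla L(c_i)(x)=0$ with all $\lambda_i(x)>0$ (the positivity coming from the local minimum, not merely critical, character — this is where convexity of length functions enters). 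So near $x_0$, $M_0$ is cut out by the $6g-6$ equations expressing that the Gram-type matrix of the $\nabla L(c_i)$ with respect to a fixed background metric has a one-dimensional kernel spanned by a positive vector; more efficiently, one can describe $M_0$ as the zero set of the single $\mathcal{T}_g$-valued (codimension-counting) obstruction "$\nabla(L(A,C))=0$ has a solution $A$ on the boundary ray," and use the implicit function theorem. The clean way is: parametrise a neighbourhood in the generalised-minimum set by $\phi$, where now $\phi$ is defined on (the closure of) the set of admissible weights $A$, note that $\phi$ is continuously differentiable there, and that the rank of $d\phi$ is $|C|-1$ along the preimage of $M_0$ and $|C|$ elsewhere, with the kernel directions being precisely the rescaling directions in weight space that lie along the single extra linear relation; the constant-rank theorem then gives that the image is locally a $C^1$ submanifold of dimension $|C|-1$, and it matches up $C^1$-smoothly with the dimension-$|C|$ piece coming from $C$-regular points because the charts from Lemma~\ref{themetric} were built precisely so that $\{\nabla L(c)(x)\}^\perp$ varies continuously across $M_0$.

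The key steps, in order: (1) Away from $M_0$, invoke Lemma~\ref{regularitylem} to get a $C^1$ submanifold of dimension $|C|$. (2) Identify $M_0$ as the locus where a positive linear relation $\sum\lambda_i\nabla L(c_i)=0$ holds, and show this is a $C^1$ submanifold of dimension $|C|-1$ — here use that the drop is by exactly one so the relation (up to scale) and the positive weights $\lambda_i$ are uniquely determined and depend $C^1$-ly on $x$, then apply the implicit/constant-rank function theorem to the map sending $x$ to the "failure of a positive relation," or equivalently analyse $\phi$ on admissible boundary weights. (3) Glue: show the $|C|$-dimensional sheet limits onto $M_0$ tangentially, i.e. that the tangent spaces $\mathrm{Span}\{\nabla L(c)(x)\}$ converge to $T_{x_0}M_0 = \mathrm{Span}\{\nabla L(c)(x_0)\}$ as $x\to x_0$ through $C$-regular points — this is exactly the content that makes the coordinates of Lemma~\ref{themetric} work, since there the vectors $\partial/\partial x_{n-j+1},\dots,\partial/\partial x_n$ that drop out are continuous and tangent to the $\{\nabla L(c)\}^\perp$ image. (4) Conclude that the whole set of generalised minima is a single $C^1$ embedded submanifold, embeddedness following from local finiteness and the fact that $\phi$ is injective on the complement of the pinching locus while no self-intersections are introduced along $M_0$ under the hypotheses.

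The main obstacle I expect is step (3), the gluing: showing that the dimension-$|C|$ sheet and the dimension-$(|C|-1)$ sheet $M_0$ fit together into a single manifold rather than, say, meeting like a half-space boundary or a cusp. Concretely one must rule out that $M_0$ is a genuine boundary of $\mathrm{Min}(C)$ in the naive sense, and this is where the hypothesis "linearly independent except at generalised minima, drop by exactly one" does real work — it forces the gradient configuration to pass smoothly through the degenerate locus rather than terminate there, so that points on both sides of $M_0$ (in weight space, the two sides of the hyperplane carrying the relation) map to genuine points of $\mathcal{T}_g$ with continuously-varying tangent planes. Making this precise amounts to a careful local model computation with the second-order behaviour of the $L(c)$ near $x_0$ (using strict convexity, Theorem~\ref{Thurstonprop}, and Akrout's non-degeneracy), but once the local model is pinned down the submanifold conclusion is routine.
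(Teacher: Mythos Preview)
Your proposal rests on a structural misreading of the setup. Under the hypotheses of the lemma, every point of $\mathrm{Min}(C)$ (indeed every generalised minimum) is by definition a place where a nontrivial linear relation $\sum_i \lambda_i \nabla L(c_i)=0$ holds, hence a place where the gradients are \emph{not} linearly independent. So $\mathrm{Min}(C)\subset M_0$, and there is no ``$|C|$-dimensional sheet away from $M_0$'' to glue onto anything: your steps (1), (3) and (4) are addressing a phantom. The entire content of the lemma is your step (2) --- showing that the locus $M_0$ where the span drops from $|C|$ to $|C|-1$ is a smooth $(|C|-1)$-dimensional embedded submanifold. Your step (2) could in principle be made to work via a constant-rank argument, but the positivity claim you make there is also wrong: the Lagrange-multiplier relation $\sum\lambda_i\nabla L(c_i)=0$ at a generalised minimum has coefficients summing to a nonzero constant, but they need not all be positive (positivity singles out $\mathrm{Min}(C)$, which as the paper notes is strictly smaller than the set of generalised minima even for filling $C$). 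Convexity of the restricted length function guarantees that critical equals minimum on each locus, not that the multipliers are positive.

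The paper's argument is both different and much shorter. It uses the transverse foliation of $\mathcal{T}_g$ by the loci $E(C,d)$, $d\in\mathbb{R}^{|C|-1}$: the linear-independence hypothesis makes the $|C|-1$ functions $d_i=L(c_i)-L(c_{|C|})$ a submersion, so each $E(C,d)$ is a smooth codimension-$(|C|-1)$ submanifold by the preimage lemma; since $C$ fills, the collar lemma forces $L(c_1)|_{E(C,d)}$ to be proper, and strict convexity then gives a unique minimum on each connected component of $E(C,d)$. The assignment $d\mapsto(\text{this minimum})$ inverts the map $x\mapsto (d_1(x),\dots,d_{|C|-1}(x))$ restricted to the minimum set, and the latter is a local diffeomorphism, so the set of generalised minima is a smooth $(|C|-1)$-dimensional embedded submanifold. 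No gluing, no analysis of $\phi$ on admissible boundary weights, no charts from Lemma~\ref{themetric} are needed.
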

\begin{proof}
The loci $E(C,d)$ with varying $d$ are parameterised by the $|C|$-tuple $d$ whose entries sum to one. By assumption, the first $|C|-1$ entries are analytic functions $d_{1}, \ldots, d_{|C|-1}$ whose values on $\mathcal{T}_{g}$ are all regular and whose level sets intersect transversely. It follows from multiple applications of the pre-image lemma that loci $E(C,d)$ are smooth, embedded submanifolds without boundary. The same applies to level sets of $L(c)|_{E(C,d)}$ for every $c\in C$. The linear independence assumptions also imply that the only critical points of $L(c)|_{E(C,d)}$ for $c\in C$ are generalised minima; consequently each connected component of $E(C,d)$ has at most one generalised minimum of $C$. Since $C$ fills, it follows from the collar lemma that level sets of $L(c)|_{E(C,d)}$ for $c\in C$ are compact. Moreover, the level sets of $L(c)|_{E(C,d)}$ above the minima are codimension 1 spheres centered on the minimum.\\

Putting all these observations together, when $C$ fills, a generalised minimum of $C$ on $E(C,d)$ is a point. The functions $d_{1}, \ldots, d_{|C|-1}$ then give a local diffeomorphism from an open subset of $\mathbb{R}^{|C|-1}$ to a subset of $\mathcal{T}_{g}$ consisting of the generalised minima of $C$.
\end{proof}

The following is an extension of Lemma \ref{14} to generalised minima.

\begin{lemma}
\label{14generalised}
For $C'\subset C$, a generalised minimum of $C'$ is a generalised minimum of $C$.
\end{lemma}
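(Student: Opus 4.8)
The plan is to produce, from the shift tuple witnessing that $x$ is a generalised minimum of $C'$, a compatible shift tuple for the larger set $C$, and then to observe that passing to the larger curve set only shrinks the relevant locus, so that the local-minimum property is inherited by restriction. So let $x$ be a generalised minimum of $C'$, witnessed by a tuple $d'=(d'_c)_{c\in C'}$ with $x\in E(C',d')$ and $L(c')|_{E(C',d')}$ attaining a local minimum at $x$ for some (equivalently every) $c'\in C'$.

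First I would extend $d'$ to a tuple $d=(d_c)_{c\in C}$ indexed by all of $C$: put $\kappa:=L(c')(x)+d'_{c'}$ (independent of the choice of $c'\in C'$ because $x\in E(C',d')$) and define $d_c:=\kappa-L(c)(x)$ for every $c\in C$. By construction $d$ agrees with $d'$ on $C'$ up to the overall additive constant that does not affect the locus, and $x\in E(C,d)$. Second, I would note that the linear system defining $E(C,d)$ contains, as the sub-system indexed by pairs of curves in $C'$, exactly the system defining $E(C',d')$; hence $E(C,d)\subseteq E(C',d')$. Since $x$ lies in this smaller set, any neighbourhood of $x$ in $E(C',d')$ on which $L(c')$ is bounded below by $L(c')(x)$ intersects $E(C,d)$ in a neighbourhood of $x$ on which the same inequality holds, so $L(c')|_{E(C,d)}$ has a local minimum at $x$ for each $c'\in C'\subseteq C$. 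Finally, because all the functions $L(c)+d_c$ with $c\in C$ coincide on $E(C,d)$, a local minimum of one of them at $x$ is a local minimum of all of them; thus $L(c)|_{E(C,d)}$ has a local minimum at $x$ for every $c\in C$, which is precisely the statement that $x$ is a generalised minimum of $C$.

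I do not expect a genuine obstacle here: the argument is purely formal, resting on the canonicity of the shift tuple $d(x)$ up to an additive constant (so the extension of $d'$ to $d$ is well defined as a locus) and on the trivial persistence of a local minimum to a subspace still containing the point. The only spots warranting a sentence of care are the two bookkeeping points just mentioned, namely that adding a constant to every entry of a shift tuple leaves the locus unchanged, and that the "for some (and hence every)" clause in the definition of a generalised minimum is respected because the shifted length functions agree on $E(C,d)$; both are handled in the steps above.
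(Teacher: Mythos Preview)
Your argument is correct and follows essentially the same route as the paper: both proofs hinge on the containment $E(C,d)\subseteq E(C',d')$ for the extended shift tuple, so that a local minimum of $L(c')$ on the larger locus restricts to one on the smaller locus, and then the equality of the shifted lengths on $E(C,d)$ propagates this to every $c\in C$. Your write-up is in fact more explicit than the paper's (which phrases the minimum condition as ``$v_{C'}$ is zero''), but the underlying idea is identical.
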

\begin{proof}
Loci $E(C,d)$ for varying $d$ with $d|_{C'}=d'$ are contained in $E(C', d')$. A minimum $x$ of $C'$ in $E(C', d')$ is a point at which $v_{C'}$ is zero, and is therefore also a point at which the lengths of curves in $C$ cannot locally be decreased inside of the locus $E(C,d(x))$ passing through $x$.
\end{proof}

When discussing loci, the notation $v_{C}$ will always be used for a vector that gives a direction in which the lengths of curves in $C$ are all increasing at the same rate. Whenever possible, $v_{C}$ will be chosen to be nonzero and in the convex hull of $\{\nabla L(c)\ |\ c\in C\}$.\\

A locus $E(C,d)$ will be called 
\begin{itemize}
\item{balanced at $x\in E(C,d)$ if $x\in \mathrm{Min}(C)$ or there exists a nonzero $v_{C}(x)$ in the tangent cone to $E(C,d)$ at $x$ contained in the convex hull of $\{\nabla L(c)(x)\ |\ c\in C\}$.}
\item{semi-balanced at $x$ if it is not balanced but $x\in\mathrm{Min}(C')$ for some $C'\subset C$ or there is a nonzero $v_{C}(x)$ in the tangent cone to $E(C,d)$ on the boundary of the convex hull of $\{\nabla L(c)(x)\ |\ c\in C\}$.}
\item{unbalanced at $x$ if there does not exist a nonzero $v_{C}(x)$ in the tangent cone to $E(C,d)$ in or on the boundary of the convex hull of $\{\nabla L(c)(x)\ |\ c\in C\}$, or $x$ is an isolated generalised minimum of $C$ on $E(C,d)$ for which every neighbourhood of $x$ contains points in $E(C,d)$ that are unbalanced. }
\end{itemize}
It was shown in \cite{MS} that the property of being balanced, semi-balanced or unbalanced is independent of the choice of metric. A stratum $\mathrm{Sys}(C)$ is balanced (respectively semi-balanced or unbalanced) at $x$ if $E(C)$ is balanced (respectively semi-balanced or unbalanced) at $x$. \\

\begin{remark}
\label{balancedeverywhere}
Although the property of being balanced/semibalanced/unbalanced was defined for points of a locus $E(C,d)$, at least on connected open subsets of loci on which $\{\nabla L(c)\ |\ c\in C\}$ are linearly independent, these properties pass to loci, and hence to strata. In the case of balanced strata, this follows from an argument given in the proof of Proposition 3.13 of \cite{MS} and for unbalanced strata this was shown in the proof of Proposition 3.20 of \cite{MS}.
\end{remark}

\section{The horizon map}
\label{horizonsec}
Recall that for $\delta<\epsilon_{M}$, the boundary of the $\delta$-thick part of $\mathcal{T}_{g}$, $\partial\mathcal{T}_{g}^{\delta}$, is homotopy equivalent to Harvey's curve complex $\mathcal{C}_{g}$. The purpose of this section is to define the horizon map and determine its image. In the next section, this model of the thin part of $\mathrm{Min}(C)$ will be used to describe the structure of $\mathrm{Min}(C)$ under regularity assumptions.\\

Throughout this section, $C$ will be assumed to be a filling set of curves with pairwise intersection at most one.\\

In what follows, it is tempting to think of objects such as the horizon map defined below as having image in a dual complex to $\mathcal{C}_{g}$. However, as $\mathcal{C}_{g}$ is not locally finite, such an object is not a simplicial complex. For this reason, the image of the horizon map will be defined to be in the barycentric subdivision $\mathcal{C}^{\circ}_{g}$ of $\mathcal{C}_{g}$. The simplicial complex $\mathcal{C}^{\circ}_{g}$ has vertices labelled by multicurves $m_{1}<\ldots < m_{n}$, where the definition is made that $m_{i}<m_{j}$ if $m_{i}$ is a subset of $m_{j}$. The interior of the simplex is labelled by the smallest multicurve in this ordering. \\

\begin{definition}[Horizon map $h:\mathrm{Min}(C)\rightarrow \mathcal{C}^{\circ}_{g}$]
The horizon map $h:\mathrm{Min}(C)\rightarrow \mathcal{C}^{\circ}_{g}$ takes the set of minima $\mathrm{Min}(C)$ to the subcomplex of $\mathcal{C}^{\circ}_{g}$ with vertices labelled by multicurves on $\mathcal{S}_{g}$ that become arbitrarily short on $\mathrm{Min}(C)$.
\end{definition}

\begin{remark}
Although a set of minima $\mathrm{Min}(C)$ might not be uniquely labelled by the set of curves $C$, it will follow that the horizon map does not depend on the choice of $C$ but only on the set of minima. This is illustrated in Example \ref{schmutzexample}.
\end{remark}

Recall from Lemma \ref{14} that a point $x\in \partial \mathrm{Min}(C)$ is contained in $\mathrm{Min}(C')$ for some filling subset $C'\subset C$. In what follows, Lemma \ref{14} will be generalised by defining sets of minima in the Weil-Petersson metric completion $\overline{\mathcal{T}}_{g}$ of $\mathcal{T}_{g}$ for nonfilling subsets of $C$, and showing how these sets of minima for nonfilling subsets describe the intersection of $\mathrm{Min}(C)$ with the thin part of $\mathcal{T}_{g}$.These should not be confused with the generalised minima for nonfilling sets defined earlier, which are contained in $\mathcal{T}_{g}$.\\

Recall that by Theorem \ref{Thurstonprop}, if $C'\subset C$ does not fill, $\mathrm{Min}(C)$ is empty. However, for any fixed $A\in \mathbb{R}_{+}^{|C'|}$, it is still possible to find sequences of points in $\mathcal{T}_{g}$ on which $L(A, C')$ approaches its infimum.

\begin{definition}[$\mathrm{Min}(C')^{\infty}$ for nonfilling $C'$]
$\mathrm{Min}(C')^{\infty}$ is the set of all limits in $\overline{\mathcal{T}}_{g}$ of sequences $\{x_{i}\}$ in $\mathcal{T}_{g}$ for which there exists an $A\in \mathbb{R}_{+}^{|C'|}$ such that $\lim_{i\rightarrow \infty}L(A,C')(x_{i})$ approaches its infimum on $\mathcal{T}_{g}$.
\end{definition}

Choose and fix a nonfilling $C'\subset C$. Suppose $A$ is a $|C|$-tuple with all entries positive except for the entries corresponding to curves in $C\setminus C'$, which are zero. Let $A_{i}$, $i=1, 2, \ldots$ be a sequence of $|C|$-tuples with strictly positive entries, such that $A_{i}\rightarrow A$ as $i\rightarrow \infty$. For each value of $i$ there is a point $x_{i}$ in $\mathrm{Min}(C)$ at which the infimum of $L(A_{i}, C)$ is realised. As $i\rightarrow \infty$ the infimum of $L(A, C)$ is realised as the limit of $L(A_{i}, C)(x_{i})$. The set $\mathrm{Min}(C')^{\infty}$ will be said to be \textit{adherent} to $\mathrm{Min}(C)^{\infty}$ (in case of nonfilling $C$) or adherent to $\mathrm{Min}(C)$ (in case of filling $C$).\\

A sequence $C_{1}\subsetneq C_{2} \subsetneq \ldots C_{k}$ of nonfilling subsets of $C$ will be called \textit{maximal} if no subset can be inserted to obtain a longer sequence. For a nonfilling $C'\subset C$, $m(C')$ will be used to denote the multicurve consisting of the set of curves on the boundary of the subsurface of $\mathcal{S}_{g}$ filled by $C'$. Note that curves are defined up to isotopy; while a given isotopy class might be realised more than once on the boundary of a subsurface filled by $C'$, each isotopy class has at most one representative in $m(C')$. For this reason, $m(C')$ might be nonseparating.\\

The next corollary determines the image of $\mathrm{Min}(C)$ under the horizon map.

\begin{corollary}
\label{defncor1}
Suppose $C_{1}\subsetneq C_{2} \subsetneq \ldots C_{n-1} \subsetneq C_{k}$ is maximal. It follows that
\begin{enumerate}
\item{Every point in the Weil-Petersson metric completion of $\mathrm{Min}(C)$ is either in $\mathrm{Min}(C')$ for some filling $C'\subset C$ or in $\mathrm{Min}(C')^{\infty}$ for some nonfilling $C'\subset C$.}
\item{$\mathrm{Min}(C_{k})^{\infty}\cap \overline{\mathrm{Min}(C)}\subset \mathrm{Sys}(m(C_{k}))^{\infty}$}
\item{The multicurve $m(C_{i})$ is pinched on $\mathrm{Min}(C_{i})^{\infty}$ for every $i=1, \ldots, k$}
\item{$\mathrm{Min}(C_{i})^{\infty}$ is adherent to $\mathrm{Min}(C')$ or $\mathrm{Min}(C')^{\infty}$ for any $C'$ that contains $C_{i}$.}
\item{The sequence $C_{1}\subsetneq C_{2} \subsetneq \ldots C_{k-1} \subsetneq C_{k}$ determines a simplex in the image of $\mathrm{Min}(C)$ under the horizon map.}
\end{enumerate}
\end{corollary}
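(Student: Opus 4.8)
The plan is to trace limits of minimizers of the weighted length functions $L(A,C)$, recording which subsurfaces degenerate, with the surjection $\phi\colon\mathbb{R}_{+}^{|C|}\to\mathrm{Min}(C)$ of Lemma~\ref{lemma1} as the main tool: every point of $\mathrm{Min}(C)$ is $\phi(A)$ for some weight $A$, and every point of the Weil--Petersson closure $\overline{\mathrm{Min}(C)}$ in $\overline{\mathcal{T}}_{g}$ is a limit of such points $\phi(A_{i})$. For part~(1) I would split according to whether $x\in\overline{\mathrm{Min}(C)}$ lies in $\mathcal{T}_{g}$ or is noded. If $x\in\mathcal{T}_{g}$ then $x$ lies in the closed set $\mathrm{Min}(C)\cup\partial\mathrm{Min}(C)$, so Lemma~\ref{14} gives $x\in\mathrm{Min}(C')$ for some $C'\subseteq C$, with $C'$ filling since $\mathrm{Min}(C')\ne\emptyset$. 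If $x$ is noded, write $x=\lim_{i}\phi(A_{i})$, normalize the $A_{i}$, pass to a subsequence with $A_{i}\to A\ge 0$, and set $C'=\{c\in C:A_{c}>0\}$. Two elementary bounds finish it: substituting a near-minimizer of $L(A|_{C'},C')$ into $L(A_{i},C)$ gives $\limsup_{i}L(A_{i},C)(\phi(A_{i}))\le\inf L(A|_{C'},C')$, while for large $i$ one has $L(A_{i},C)(\phi(A_{i}))\ge(1-o(1))L(A|_{C'},C')(\phi(A_{i}))$. If $C'$ filled, $L(A|_{C'},C')$ would be proper, so the $\phi(A_{i})$ would lie in a fixed compact set and converge to the unique minimizer in $\mathcal{T}_{g}$, contradicting that $x$ is noded; hence $C'$ is nonfilling and the same bounds give $L(A|_{C'},C')(\phi(A_{i}))\to\inf L(A|_{C'},C')$, i.e.\ $x\in\mathrm{Min}(C')^{\infty}$.

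Parts~(2) and~(3) have essentially the same proof. Since $C_{i}$ does not fill, Theorem~\ref{Thurstonprop} shows $L(A,C_{i})$ has no critical point in $\mathcal{T}_{g}$, so any sequence realizing its infimum leaves every compact set; along such a sequence the lengths of the curves of $C_{i}$ stay bounded, so only a multicurve disjoint from $C_{i}$ can be pinched, and such a multicurve is supported in the complement of the subsurface $\Sigma_{i}$ filled by $C_{i}$ or on $\partial\Sigma_{i}=m(C_{i})$. Comparing the value of $L(A,C_{i})$ along the sequence with the minimum of $L(A,C_{i})$ over the Teichm\"uller space of $\Sigma_{i}$ with prescribed boundary lengths --- which is monotone in those lengths --- forces every component of $m(C_{i})$ to be pinched, so $\mathrm{Min}(C_{i})^{\infty}\subseteq\mathrm{Sys}(m(C_{i}))^{\infty}$. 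Part~(2) is the special case $i=k$ intersected with $\overline{\mathrm{Min}(C)}$; maximality of the chain enters only to note that $C_{k}$ is a maximal nonfilling subset of $C$.

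For part~(4) I would run the adherence construction of Section~\ref{subminima} on the pair $C_{i}\subseteq C'$: take a weight vector on $C'$ supported on $C_{i}$, approximate it by strictly positive weights $A_{j}\to A$, and pass to limits of the corresponding (approximate) minimizers; the definition of adherence, together with the coercivity estimate of part~(1) that locates where such limits land, shows $\mathrm{Min}(C_{i})^{\infty}$ is adherent to $\mathrm{Min}(C')$ when $C'$ fills and to $\mathrm{Min}(C')^{\infty}$ otherwise. For part~(5) I would iterate part~(4) along the chain --- $\mathrm{Min}(C_{i})^{\infty}$ is adherent to $\mathrm{Min}(C_{i+1})^{\infty}$, and $\mathrm{Min}(C_{k})^{\infty}$ is adherent to $\mathrm{Min}(C)$ --- and run a diagonal argument over the successive adherences: for each $j$ this produces a single sequence in $\mathrm{Min}(C)$ converging to a noded surface on which $\nu_{j}:=m(C_{k})\cup m(C_{k-1})\cup\cdots\cup m(C_{k-j+1})$ is pinched, using part~(3) at each stage and the topological fact that the boundaries $m(C_{\ell})$ of the nested subsurfaces $\Sigma_{\ell}$ can be realized simultaneously disjointly, so that each $\nu_{j}$ is a multicurve and $\nu_{1}\subseteq\nu_{2}\subseteq\cdots\subseteq\nu_{k}$. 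Each $\nu_{j}$ thus becomes arbitrarily short on $\mathrm{Min}(C)$ and so labels a vertex of the image of the horizon map, whence the chain $\nu_{1}\subseteq\cdots\subseteq\nu_{k}$ determines a simplex of $\mathcal{C}_{g}^{\circ}$ in that image.

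The main obstacle I anticipate is the analytic content behind parts~(1) and~(4): controlling the limits of minimizers of $L(A_{i},C)$ when the weights degenerate so that the filling property is lost in the limit, i.e.\ making the coercivity and semicontinuity estimates rigorous uniformly in $i$. A second, more geometric point is the claim used in parts~(2)--(3) that the infimum of a nonfilling length function is realized by pinching exactly $m(C_{i})$ and nothing in the interior of $\Sigma_{i}$, which rests on the monotonicity of $\min L(A,C_{i})$ in the boundary lengths of $\Sigma_{i}$. With these settled, the diagonal argument in part~(5) is routine, apart from the bookkeeping that the $m(C_{\ell})$ assemble into the nested family $\nu_{1}\subseteq\cdots\subseteq\nu_{k}$.
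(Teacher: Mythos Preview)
Your approach is broadly aligned with the paper's, but two points deserve comment.

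\textbf{Parts (2)--(3): a different technical input.} The paper does not argue via monotonicity of $\min L(A,C_i)$ in the boundary lengths of $\Sigma_i$. Instead it invokes Riera's theorem: the Weil--Petersson gradient of $L(c)$ for each $c\in m(C_k)$ is a direction in which the length of every curve in $C_k$ strictly increases. This immediately forces all of $m(C_k)$ to be pinched at any infimizing limit, and the collar lemma rules out pinching anything that meets $\Sigma_k$. Your monotonicity argument is a reasonable substitute and essentially equivalent in content, but you would need to justify the monotonicity claim (it is exactly what Riera's theorem supplies).

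\textbf{A gap in your Part (2).} From ``every component of $m(C_i)$ is pinched'' you conclude $\mathrm{Min}(C_i)^{\infty}\subseteq\mathrm{Sys}(m(C_i))^{\infty}$. That inference is wrong: your argument only shows $m(C_i)$ is among the pinched curves, not that it is the \emph{entire} pinched multicurve. Once $m(C_i)$ is pinched, one may pinch further curves in the complement of $\Sigma_i$ without changing any $L(c)$ for $c\in C_i$, so $\mathrm{Min}(C_i)^{\infty}$ is in general strictly larger than $\mathrm{Sys}(m(C_i))^{\infty}$. For Part~(2) the intersection with $\overline{\mathrm{Min}(C)}$ is essential, and the missing step is the one the paper supplies: since $C$ fills, any additional curve $c$ pinched in the complement of $\Sigma_k$ is crossed by some $c'\in C\setminus C_k$, whose length then blows up; such a noded surface cannot be approached from $\mathrm{Min}(C)$ by minimizers with bounded weighted length. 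You flag that maximality of $C_k$ enters here but do not actually use it --- in fact the relevant fact is just that $C$ fills (maximality of the chain is used in the paper's phrasing to ensure the curves in $C\setminus C_k$ witness this).

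The remaining parts are handled as in the paper (Parts (1) and (4) are essentially ``Lemma~\ref{14} and the definitions''; your weight-limit argument fleshes this out). For Part~(5), your formulation via the nested unions $\nu_j=m(C_k)\cup\cdots\cup m(C_{k-j+1})$ is arguably more careful than the paper's, which simply lists $m(C_1),\ldots,m(C_{k-1})$ as vertices without checking they form a chain in $\mathcal{C}_g^{\circ}$.
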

\begin{proof}
Part (1) follows from Lemma \ref{14} and the definitions. \\

To prove parts (2) and (3), Riera's theorem, \cite{Riera}, implies that the Weil-Petersson gradient of $L(c)$, for every $c\in m(C_{k})$, is a direction in which the length of every curve in $C_{k}$ is strictly increasing. It follows from the collar lemma that no curve intersecting the subsurface filled by $C_{k}$ is pinched in $\mathrm{Min}(C_{k})^{\infty}$. Once $m(C_{k})$ is pinched, pinching any other curves disjoint from the subsurface filled by $C_{k}$ does not shorten the curves in $C_{k}$ any further, and the assumption of maximality implies it lengthens one or more curves in $C\setminus C_{k}$. Consequently, such noded surfaces do not represent points in $\mathrm{Min}(C_{k})^{\infty}\cap \overline{\mathrm{Min}(C)}$. \\
Part (4) also follows from the definitions.\\

For part (5), the sequence $C_{1}\subsetneq C_{2} \subsetneq \ldots C_{k-1} \subsetneq C_{k}$ determines the simplex in $\mathcal{C}_{g}^{\circ}$ with vertices labelled by $m(C_{1}), m(C_{2}), \ldots, m(C_{k-1})$; no claim is being made that these multicurves are all distinct. Note that two different sequences of subsets could determine the same simplex.
\end{proof}

For a set of minima $\mathrm{Min}(C)$, define $C_{\mathrm{max}}$ to be the largest set of curves containing $C$ for which every nonfilling subset $C'$ of $C_{\mathrm{max}}$ gives $\mathrm{Min}(C')^{\infty}$ adherent to $\mathrm{Min}(C)$. $C_{\mathrm{max}}$ is not assumed to have the property that curves in this set intersect pairwise at most once.\\

Intuitively, a set of minima behaves like a convex hull of the minima on its boundary. As the horizon map determines this boundary, this suggests the next conjecture. 

 \begin{conjecture}
\label{uniquelydetermines}
The image of the horizon map uniquely determines the set of minima $\mathrm{Min}(C)=\mathrm{Min}(C_{\mathrm{max}})$.
\end{conjecture}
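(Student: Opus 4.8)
The plan is to reconstruct $\mathrm{Min}(C_{\mathrm{max}})$ from its horizon image by induction on the number of curves in $C_{\mathrm{max}}$, or equivalently on $\dim\mathrm{Min}(C)$, using the ``convex hull'' heuristic mentioned before the statement. The base case is Remark \ref{minfilling}: when $C$ is filling but contains no proper filling subset, $\partial\mathrm{Min}(C)$ is empty, so the horizon map image is a single simplex $\sigma_0$ labelled by the maximal chains of nonfilling subsets of $C_{\mathrm{max}}$, and the claim is that $\sigma_0$ together with the combinatorial type of those chains pins down $\mathrm{Min}(C)$ uniquely. Here one would argue that the nonfilling subsets $C'\subset C_{\mathrm{max}}$, the multicurves $m(C')$, and the adherence relations of Corollary \ref{defncor1} are all recoverable from $\sigma_0$, and that $\mathrm{Min}(C)$ is then the unique minimal (in the sense of Lemma \ref{lemma1} and the map $\phi$) pinched cell in $\overline{\mathcal{T}}_g$ whose Weil--Petersson-completion boundary strata, organized by the stratification into $\mathrm{Min}(C')^{\infty}$, realize exactly these adherences and pinched multicurves.

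For the inductive step, suppose $\mathrm{Min}(C)$ has nonempty boundary. By Lemma \ref{14}, $\partial\mathrm{Min}(C)$ is covered by the sets $\mathrm{Min}(C')$ for $C'\subsetneq C$ filling, together with the adherent $\mathrm{Min}(C')^{\infty}$ for nonfilling $C'\subset C$ coming from Corollary \ref{defncor1}. The key point is that the horizon image of $\mathrm{Min}(C)$ \emph{contains} the horizon images of all these boundary pieces as subcomplexes, labelled so that one can read off which $C'$ occur and which $m(C')$ get pinched; by induction each $\mathrm{Min}(C')$ and $\mathrm{Min}(C')^{\infty}$ is then determined by its horizon image. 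One then wants to say: $\mathrm{Min}(C)$ is the unique pinched cell in $\mathcal{T}_g$ filling in this prescribed boundary configuration subject to the eutaxy/convexity constraints defining sets of minima, i.e. $\mathrm{Min}(C)=\overline{\phi(\mathbb{R}_+^{|C|})}$ for the essentially unique $C$ (equivalently $C_{\mathrm{max}}$) whose length-function gradients interpolate these boundary gradients. Uniqueness of the filling should follow from strict convexity of length functions with respect to the Weil--Petersson metric (cited from \cite{Bestvina}, \cite{Kerckhoff}, \cite{Wolpert}): a pinched cell of minima is determined by its boundary behavior because the interior is swept out by gradient flows of the convex combinations $L(A,C)$, whose minima vary continuously with $A$.

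The main obstacle is recovering $C_{\mathrm{max}}$ itself, and hence the gradients $\{\nabla L(c)\mid c\in C_{\mathrm{max}}\}$ whose span is $T_x\mathrm{Min}(C)$, from purely combinatorial boundary-at-infinity data. The horizon image records only which multicurves become short, i.e. the $m(C')$ for nonfilling $C'$; it does not obviously record the \emph{filling} curves in $C_{\mathrm{max}}$ nor their pairwise geometric intersections, and indeed $C_{\mathrm{max}}$ need not even have pairwise intersection at most one. The resolution I would attempt is that the filling curves are forced: a curve $c$ lies in $C_{\mathrm{max}}$ precisely when adjoining it does not destroy adherence of any $\mathrm{Min}(C')^{\infty}$ already present, which by Corollary \ref{defncor1}(2)--(4) and the collar-lemma/Riera argument in its proof is a condition visible from the pinched multicurves on the boundary simplices. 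A second, more technical obstacle is controlling the non-$C$-regular (pinched) locus in the interior: the homotopy/collapse structure there is not determined by the boundary alone in an obvious way, so one may need to invoke Lemma \ref{differentminima}-type transversality together with the description in the proof of Lemma \ref{themetric} to show the pinching submanifolds are themselves determined by the adherence combinatorics. I expect these two points — pinning down the filling part of $C_{\mathrm{max}}$ and the interior pinched locus — to be where the real work lies, and it is plausible the conjecture requires an additional genericity or polytopal-regularity hypothesis to hold in full.
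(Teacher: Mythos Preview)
This statement is labelled a \emph{conjecture} in the paper, and the paper does not supply a proof. The sentence immediately preceding it says only that the ``convex hull'' intuition \emph{suggests} the statement; no argument is given, and in the introduction the paper remarks merely that ``by analogy with punctured surfaces, it would seem that the horizon map uniquely determines a set of minima.'' So there is nothing to compare your proposal against: you are attempting to prove an open conjecture.

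Your outline is a reasonable heuristic sketch of why one might believe the conjecture, but as you yourself note, it is not a proof. The two obstacles you flag are genuine and, as far as I can tell, unresolved. First, the horizon image only records the multicurves $m(C')$ that get pinched, and your proposed criterion for recovering the filling curves of $C_{\mathrm{max}}$ --- ``$c\in C_{\mathrm{max}}$ iff adjoining $c$ does not destroy adherence of any $\mathrm{Min}(C')^{\infty}$ already present'' --- is not obviously equivalent to the paper's definition of $C_{\mathrm{max}}$, and you would need to show that distinct filling sets with the same adherence pattern cannot give distinct sets of minima (Example~\ref{schmutzexample} shows different $C$ can give the \emph{same} $\mathrm{Min}(C)$, but not that the horizon image distinguishes distinct $\mathrm{Min}(C)$). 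Second, the inductive step assumes that a pinched cell is determined by its boundary via convexity of length functions, but convexity gives uniqueness of the minimum of a \emph{fixed} $L(A,C)$, not uniqueness of the cell swept out as $A$ varies once the boundary configuration is prescribed; the pinching locus where $C$-regularity fails is precisely where this sweep is not a homeomorphism, and nothing in the paper controls it from boundary data alone. Your closing remark that an extra regularity hypothesis may be needed is consistent with the paper's own stance: polytopal regularity is assumed throughout Section~\ref{twocases} precisely because the unregulated case is not understood.
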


\section{Faces and strata}
\label{twocases}
A combinatorial model for a set of minima $\mathrm{Min}(C)$ containing a critical point $p\in \mathrm{Sys}(C)$ will be discussed. Under regularity assumptions, $\mathrm{Min}(C)$ will be described as the image of a map $\psi$ from a polytope with vertices corresponding to the vectors $\{-\nabla L(c)(p)\ |\ c\in C\}$, and the faces of $\mathrm{Min}(C)$ will be related to strata in $\mathcal{P}_{g}$ adjacent to $p$. The key idea is the recurring theme that the decomposition of the tangent space $T_{p}\mathrm{Min}(C)$ at a critical point $p\in \mathrm{Sys}(C)$ given by a Voronoi-like decomposition $\mathcal{V}(p)$ gives a surprising accurate approximation of the global combinatorial structure of $\mathrm{Min}(C)$. \\

\subsection{Petals}
\label{corollaries}

Much of the combinatorial information about sets of minima comes from studying the top-dimensional strata in $\mathcal{T}_{g}$ adjacent to $p$; the ``petals''. A number of useful corollaries and basic properties of petals are presented in this subsection.\\

A minimal filling set of curves is a set of curves, intersecting pairwise at most once, that does not contain any filling subsets. When $C$ is a minimal filling set, for every $c\in C$ there is a curve $c^{*}$ with the property that $c^{*}$ intersects $c$ but is disjoint from every curve in $C\setminus \{c\}$.\\

Many results about petals are based on the following lemma. 

\begin{lemma}[Proposition 3.14 of \cite{MS}]
\label{mycor}
Suppose $c_{1}$ and $c_{2}$ are curves on $\mathcal{S}_{g}$ that intersect at most once, where $c_{1}$ is nonseparating. Suppose also that $v_{1,2}(x)$ denotes a vector in $T_{p}\mathcal{T}_{g}$ representing a direction in which $L(c_{1})$ and $L(c_{2})$ are increasing at the same rate. Then at every point $x$ for which $L(c_{1})(x)=L(c_{2})(x)$ there is a vector $v_{1,2}(x)\neq 0$, contained in the interior of the convex hull of $\nabla L(c_{1})(x)$ and $\nabla L(c_{2})(x)$.\\

Alternatively, the lemma holds with the set $\{c_{1}, c_{2}\}$ replaced by a subset of a minimal filling set of curves, or a multicurve.
\end{lemma}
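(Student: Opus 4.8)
The plan is to prove Lemma~\ref{mycor} by a direct hyperbolic-geometry argument, producing the vector $v_{1,2}(x)$ as a Weil--Petersson gradient of a cleverly chosen auxiliary length function and then checking it lies in the open convex hull of $\nabla L(c_1)(x)$ and $\nabla L(c_2)(x)$. First I would set up coordinates: fix $x$ with $L(c_1)(x)=L(c_2)(x)$, and recall that the convex hull of $\nabla L(c_1)(x)$ and $\nabla L(c_2)(x)$ in $T_x\mathcal{T}_g$ (with the Weil--Petersson metric) is a line segment, whose interior consists of the vectors $t\nabla L(c_1)(x)+(1-t)\nabla L(c_2)(x)$ with $t\in(0,1)$. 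Since $L(c_1)$ and $L(c_2)$ are analytic and strictly convex along WP-geodesics (the convexity results of \cite{Wolpert}, \cite{Kerckhoff}, \cite{Bestvina} cited in the excerpt), the gradients are nonzero; the only thing that can go wrong is $\nabla L(c_1)(x)$ and $\nabla L(c_2)(x)$ being antiparallel, in which case every nonzero element of their convex hull would have to be $0$. So the crux is to rule out $\nabla L(c_1)(x)=-\lambda\nabla L(c_2)(x)$ for $\lambda>0$; once that is excluded, $v_{1,2}(x):=\tfrac12(\nabla L(c_1)(x)+\nabla L(c_2)(x))$ is a nonzero element of the interior of the convex hull, and by construction $v_{1,2}(x)L(c_1)=v_{1,2}(x)L(c_2)$ follows after a symmetrization/rescaling using that the two gradients have equal WP-length at a point where $L(c_1)=L(c_2)$ is not automatic — so more carefully, I would take $v_{1,2}(x)=a\nabla L(c_1)(x)+b\nabla L(c_2)(x)$ and solve the linear equation $v_{1,2}L(c_1)=v_{1,2}L(c_2)$, i.e. $a\|\nabla L(c_1)\|^2+b\langle\nabla L(c_1),\nabla L(c_2)\rangle=a\langle\nabla L(c_1),\nabla L(c_2)\rangle+b\|\nabla L(c_2)\|^2$, which has a solution with $a,b>0$ precisely when the gradients are not antiparallel.

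The heart of the argument is therefore the claim that if $c_1$ is nonseparating and $\operatorname{i}(c_1,c_2)\le 1$, then $\nabla L(c_1)(x)$ and $\nabla L(c_2)(x)$ are never negatively proportional. I would prove this by contradiction: suppose $\nabla L(c_1)(x)=-\lambda\nabla L(c_2)(x)$ with $\lambda>0$. Then the direction $-\nabla L(c_1)(x)$ simultaneously decreases $L(c_1)$ and increases $L(c_2)$ maximally, i.e. $c_1$ and $c_2$ have ``opposite'' first-order behavior to an extreme degree. The tool to contradict this is Riera's formula (\cite{Riera}, used elsewhere in the excerpt, e.g. in the proof of Corollary~\ref{defncor1}) for the WP-inner product $\langle\nabla L(c_1),\nabla L(c_2)\rangle$, which is a sum of an explicitly positive ``diagonal'' term (present because $c_1,c_2$ share the surface) plus a sum over pairs of lifts that is controlled in size; when $\operatorname{i}(c_1,c_2)\le 1$ the off-diagonal contribution is small enough that $\langle\nabla L(c_1),\nabla L(c_2)\rangle>-\|\nabla L(c_1)\|\,\|\nabla L(c_2)\|$, strictly, ruling out antiparallelism. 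The nonseparating hypothesis on $c_1$ enters to guarantee $c_1$ is not contained in a proper subsurface disjoint from $c_2$ in a way that would decouple the two gradients — concretely, it forces every essential subsurface to meet $c_1$, so $c_2$ cannot be ``hidden'' from $c_1$'s deformation. (This is exactly the mechanism behind Proposition~1 of \cite{Thurston}/Theorem~\ref{Thurstonprop}, which I would also invoke: since $\{c_1,c_2\}$ with $c_1$ nonseparating and intersecting $c_2$ is genuinely two-dimensional in its first-order effect — or more precisely, apply the theorem to see $\nabla L(c_1)$, $\nabla L(c_2)$ are not negatively dependent because otherwise no common increasing direction exists, contradicting the non-filling part of the statement.) Actually the cleanest route: the set $\{c_1,c_2\}$ does not fill $\mathcal{S}_g$ (two curves never fill a genus $\ge 2$ surface), so Theorem~\ref{Thurstonprop} gives a derivation $w$ with $wL(c_1)(x)>0$ and $wL(c_2)(x)>0$; this immediately forbids $\nabla L(c_1)(x)=-\lambda\nabla L(c_2)(x)$ with $\lambda>0$, since pairing such $w$ against both would force $\langle w,\nabla L(c_1)\rangle>0$ and $\langle w,\nabla L(c_1)\rangle=-\lambda\langle w,\nabla L(c_2)\rangle<0$.

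With antiparallelism excluded, I would conclude: solve for $a,b>0$ as above to get $v_{1,2}(x)$ in the open convex hull with $v_{1,2}(x)L(c_1)=v_{1,2}(x)L(c_2)$, and note $v_{1,2}(x)\ne 0$ since $a,b>0$ and the two gradients are linearly independent (again by the non-antiparallel, and in fact non-proportional, conclusion — positive proportionality is also excluded because $c_1\ne c_2$ have distinct geodesic representatives so their length functions are not scalar multiples, which one can see from behavior near a pinching of $c_1$). For the ``Alternatively'' clause, the identical argument applies verbatim with $\{c_1,c_2\}$ replaced by any subset $C'$ of a minimal filling set or by a multicurve: $C'$ (being a proper subset of a filling set, or a multicurve, hence non-filling) satisfies the hypothesis of Theorem~\ref{Thurstonprop}, giving $w$ with $wL(c)(x)>0$ for all $c\in C'$; then $v_{C'}(x):=\sum_{c\in C'}a_c\nabla L(c)(x)$ with the $a_c>0$ chosen so that all the directional derivatives $v_{C'}(x)L(c)$ coincide — this is a solvable positive linear system exactly because the Gram-type matrix of the gradients is, after the Theorem~\ref{Thurstonprop} input, not degenerate in the bad direction — is nonzero and in the interior of the convex hull. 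The main obstacle I expect is making the last ``solvable positive linear system'' step clean in the multicurve/subset case: one must argue that the linear system forcing all $v_{C'}(x)L(c)$ equal has a strictly positive solution, which requires knowing the cone generated by $\{\nabla L(c)(x)\}$ is pointed and its dual contains the Theorem~\ref{Thurstonprop} vector $w$ in its interior; I would handle this with a Gordan/Stiemke-type alternative (Farkas lemma) applied to the gradient vectors, using the existence of $w$ as the certificate that rules out the degenerate alternative, and this is precisely the content already extracted from Theorem~\ref{Thurstonprop}.
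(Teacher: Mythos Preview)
First, a framing remark: in this paper Lemma~\ref{mycor} is quoted from \cite{MS} without proof, so there is no proof in the present paper to compare your attempt against. That said, your proposal has a genuine gap that is worth naming.

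Your strategy is to set $v_{1,2}(x)=a\nabla L(c_1)(x)+b\nabla L(c_2)(x)$ with $a,b>0$ and solve the linear condition $v_{1,2}L(c_1)=v_{1,2}L(c_2)$. You assert this system has a strictly positive solution ``precisely when the gradients are not antiparallel,'' and then use Theorem~\ref{Thurstonprop} to rule out antiparallelism. The first assertion is false. Writing $g_i=\nabla L(c_i)(x)$ and $p=\langle g_1,g_2\rangle$, the equal-rate hyperplane $(g_1-g_2)^\perp$ meets the open segment $[g_1,g_2]$ if and only if $(\|g_1\|^2-p)(\|g_2\|^2-p)>0$; by Cauchy--Schwarz this forces $p<\min(\|g_1\|^2,\|g_2\|^2)$. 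That is strictly stronger than ``not antiparallel.'' For a concrete obstruction, take $g_1=(1,0)$ and $g_2=(5,5\sqrt{3})$ in $\mathbb{R}^2$: these are linearly independent, there certainly exists $w$ with $\langle w,g_i\rangle>0$ for both $i$, yet $p=5>\|g_1\|^2=1$, and one checks the unique equal-rate point on the line through $g_1,g_2$ lies outside the segment. So the input from Theorem~\ref{Thurstonprop} (a common strictly increasing direction) does not by itself yield a common \emph{equal-rate} increasing direction inside the open convex hull. Your Farkas/Gordan heuristic at the end has the same defect: the certificate $w$ produced by Theorem~\ref{Thurstonprop} witnesses that the cone generated by the $g_i$ is pointed, but says nothing about whether the affine subspace $\{v:\langle v,g_i\rangle\ \text{all equal}\}$ meets its relative interior.

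What is actually needed is the inequality $\langle\nabla L(c_1),\nabla L(c_2)\rangle<\|\nabla L(c_i)\|^2$ for each $i$; equivalently, moving in the direction $\nabla L(c_i)$ increases $L(c_i)$ strictly faster than it increases $L(c_j)$. This is where the hypotheses ``$c_1$ nonseparating'' and ``$i(c_1,c_2)\le 1$'' (and, in the alternative clause, ``subset of a minimal filling set'' or ``multicurve'') do real work: they guarantee, for each $c_i$, a curve $c_i^{*}$ meeting $c_i$ but disjoint from the other curves, so one can produce tangent directions that increase $L(c_i)$ while leaving $L(c_j)$ first-order fixed, and leverage this to get the required strict inequality. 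Your write-up gestures at this mechanism (you mention $c_1$ nonseparating prevents $c_2$ from being ``hidden''), but you never use it---the argument you actually run only extracts non-antiparallelism and then overreaches. The parenthetical ``two curves never fill a genus $\ge 2$ surface'' is also false as stated (two curves can fill $\mathcal{S}_2$), though under the standing hypothesis $i(c_1,c_2)\le 1$ the non-filling conclusion you want is correct.
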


Note that a multicurve could contain separating curves, so is not necessarily contained in a minimal filling set.

\begin{remark}
It follows from Proposition 3.19 of \cite{MS} that Lemma \ref{mycor} holds for any choice of $\Gamma_{g}$-equivariant metric used to define the gradients.
\end{remark}

\begin{lemma}
\label{subset}
Suppose $c_{1}$, $c_{2}$ and $c_{3}$ are nonseparating, pairwise nonhomotopic, intersecting pairwise at most once and not contained in a genus 1 surface with 1 boundary component. Then $\{c_{1}, c_{2}, c_{3}\}$ is contained in a minimal filling set of curves.
\end{lemma}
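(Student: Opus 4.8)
The plan is to build by hand a minimal filling set $C$ with $\{c_{1},c_{2},c_{3}\}\subseteq C$. The organising principle is the following converse to the property quoted above: if $C$ fills $\mathcal{S}_{g}$, its curves pairwise intersect at most once, and every $c\in C$ admits \emph{some} curve $c^{*}$ that intersects $c$ and is disjoint from $C\setminus\{c\}$, then $C$ is a minimal filling set. Indeed, for such a $c$ the essential curve $c^{*}$ is disjoint from $C\setminus\{c\}$, so $C\setminus\{c\}$ does not fill; hence no single curve can be removed from $C$ without destroying the filling property, and since every proper subset of $C$ sits inside some $C\setminus\{c\}$, no proper subset of $C$ fills. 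So it is enough to produce a filling set $C\supseteq\{c_{1},c_{2},c_{3}\}$ with pairwise intersections at most one in which every curve has such a disjoint witness; the witnesses themselves need not lie in $C$.

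Step 1 (witnesses for $c_{1},c_{2},c_{3}$). For each $i$, writing $\{j,k\}=\{1,2,3\}\setminus\{i\}$, I would first produce a curve $d_{i}$ that intersects $c_{i}$ and is disjoint from $c_{j}$ and $c_{k}$. This is where the hypotheses enter. A regular neighbourhood $N$ of $c_{j}\cup c_{k}$ is a subsurface of low complexity (an annulus pair, or a one-holed torus, since $c_{j},c_{k}$ are nonseparating, nonhomotopic and meet at most once), and $c_{i}$ meets the complement of $N$ in a small number of arcs or, if $c_{i}$ is disjoint from both $c_{j}$ and $c_{k}$, in all of $c_{i}$. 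A curve $d_{i}$ disjoint from $c_{j},c_{k}$ and meeting $c_{i}$ fails to exist only if every essential curve in the complement of $N$ misses $c_{i}$, and unwinding this --- using that the curves are nonseparating and pairwise nonhomotopic --- forces $c_{i}$, $c_{j}$ and $c_{k}$ all to lie in a genus-one surface with one boundary component, against the hypothesis. I expect this step to split into the subcases given by how many of the three pairs $\{c_{i},c_{j}\}$ intersect (none, one, two, or all three), the one-holed-torus obstruction appearing in more than one of these subcases.

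Step 2 (enlarging, then pruning). Starting from $\{c_{1},c_{2},c_{3}\}$, repeatedly adjoin a curve that strictly enlarges the subsurface filled so far while meeting each curve already chosen at most once --- take an essential, non-boundary-parallel curve inside a non-disc complementary region, or, if every non-disc complementary region is an annulus, a curve obtained by closing up an arc crossing one such annulus through the adjacent disc regions --- and choose it disjoint from $d_{1},d_{2},d_{3}$. This produces, after finitely many steps, a filling set $C'\supseteq\{c_{1},c_{2},c_{3}\}$ with pairwise intersections at most one for which each $d_{i}$ is disjoint from $C'\setminus\{c_{i}\}$. Now remove curves from $C'$ one at a time, as long as the result still fills; the curves $c_{1},c_{2},c_{3}$ are never removed, because $d_{i}$ witnesses that $C'\setminus\{c_{i}\}$ does not fill, so the process ends at a minimal filling set containing $\{c_{1},c_{2},c_{3}\}$. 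The degenerate case in which $\{c_{1},c_{2},c_{3}\}$ already fills $\mathcal{S}_{g}$ --- which an Euler characteristic count on a neighbourhood of $c_{1}\cup c_{2}\cup c_{3}$ forces to happen only when $g=2$ --- is handled directly: a neighbourhood of one or two of the curves has Euler characteristic at least $-1$, too large to fill $\mathcal{S}_{2}$, so no proper subset fills and $\{c_{1},c_{2},c_{3}\}$ is itself minimal filling.

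I expect Step 1 to be the main obstacle: the existence of the witnesses $d_{i}$ is exactly where the genus-one-one-holed-surface hypothesis is needed, and it seems to require a short case analysis on the intersection pattern of $c_{1},c_{2},c_{3}$ together with some control of the topological types of the complementary pieces of $c_{j}\cup c_{k}$. A secondary point requiring care is the bookkeeping in Step 2: one must check that the successive enlargements can actually be performed while keeping all pairwise intersections at most one and keeping every added curve disjoint from $d_{1},d_{2},d_{3}$.
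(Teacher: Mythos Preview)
Your route is quite different from the paper's. The paper argues that the property of extending to a minimal filling set is $\Gamma_{g}$-invariant, so it suffices to treat one representative from each of the finitely many mapping class group orbits of triples $\{c_{1},c_{2},c_{3}\}$ satisfying the hypotheses; it then asserts that explicit minimal filling extensions can be written down orbit-by-orbit in genus $2$ and $3$ and that these generalise to higher genus, with details left to the reader. Your witness-and-prune strategy is more constructive and, if it goes through, gives a uniform argument avoiding the orbit enumeration.

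The gap is Step~2, and it is not secondary. Once $d_{1},d_{2},d_{3}$ are fixed, every curve you adjoin must avoid all three; but when you look inside a non-disc complementary region $R$ of the curves chosen so far, the arcs $d_{i}\cap R$ may already cut $R$ into discs, leaving no essential curve in $R$ disjoint from the $d_{i}$, and your annulus-crossing move has the same problem. In genus $2$ this is sharp: three pairwise disjoint $d_{i}$ can form a pants decomposition, and then \emph{every} simple closed curve disjoint from $d_{1}\cup d_{2}\cup d_{3}$ is isotopic to one of the $d_{i}$, so the only candidates to adjoin are the $d_{i}$ themselves --- which were chosen with no control on their mutual intersections or on whether $d_{i}$ meets $c_{i}$ exactly once. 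A repair would require building the $d_{i}$ far more carefully up front (pairwise disjoint, each meeting its $c_{i}$ exactly once, together with $c_{1},c_{2},c_{3}$ already close to filling), and that folds the real work back into Step~1, amounting to a case analysis not so far from the paper's. So the outline is reasonable, but the claim that the enlargement can always be performed disjoint from fixed witnesses is where the content lies, and it is not yet justified.
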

\begin{proof}
The property of being contained within a minimal filling set of curves passes to $\Gamma_{g}$-orbits of sets of curves. It follows from the classification of surfaces that for each genus there are only finitely many orbits to consider. In genus 2 and 3 it is possible construct examples that prove the lemma in all of the finitely many different orbits. These examples generalise in a straightforward way to higher genus. The details are left to the reader. 
\end{proof}

Lemma \ref{mycor} will be used to show that $L(c_{2})$ is not decreasing as fast as $L(c_{1})$ in the direction of $-\nabla L(c_{1})$ at points close to where $L(c_{1})=L(c_{2})$. This has a couple of consequences, which will be stated as corollaries.

\begin{corollary}
\label{staysin}
Suppose $C$ is a filling set, and $c_{1}\in C$. Denote by $\gamma:[0,T]\rightarrow \mathcal{T}_{g}$ a smooth path with $\gamma(0)$ in $\mathrm{Sys}(\{c_{1}\})\cap \mathrm{Min}(C)$ and $\dot{\gamma}(t)=-\nabla L(c_{1})$ for all $t\in (0,T)$, where gradients are defined with respect to the metric $\mathbf{g}$ from Lemma \ref{themetric}. The path $\gamma$ is contained in $\mathrm{Sys}(\{c_{1}\})\cap \mathrm{Min}(C)$
\end{corollary}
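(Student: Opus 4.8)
The statement to prove is Corollary \ref{staysin}: starting at a point in $\mathrm{Sys}(\{c_1\})\cap\mathrm{Min}(C)$ and flowing along $-\nabla L(c_1)$ (with respect to the metric $\mathbf{g}$), one stays inside $\mathrm{Sys}(\{c_1\})\cap\mathrm{Min}(C)$. There are two things to show along the flow: that $c_1$ remains the unique systole (staying in $\mathrm{Sys}(\{c_1\})$), and that one remains in $\mathrm{Min}(C)$.

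First I would handle membership in $\mathrm{Min}(C)$. At $\gamma(0)\in\mathrm{Min}(C)$ with $c_1$ the systole, eutaxy for $C$ holds; in particular the length function $L(A,C)$ attains its minimum at $\gamma(0)$ for some weight vector $A$. Since $\gamma(0)$ is a generalised minimum and the relevant tangent-space description from Lemma \ref{themetric} identifies $T_{\gamma(0)}\mathrm{Min}(C)$ with $\mathrm{Span}\{\nabla L(c)(\gamma(0))\mid c\in C\}$, and $-\nabla L(c_1)(\gamma(0))$ lies in this span, the flow direction is tangent to $\mathrm{Min}(C)$ at $\gamma(0)$. The substantive point is that this tangency persists: I would argue that the curve $\mathrm{Sys}(\{c_1\})\cap\mathrm{Min}(C)$ is exactly an integral curve of $-\nabla L(c_1)$. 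Indeed, on $\mathrm{Sys}(\{c_1\})$ (a top-dimensional stratum) $f_{\mathrm{sys}}=L(c_1)$ is smooth, and the set of minima of a length function supported on $C$ restricted to this stratum is governed by the gradient of $L(c_1)$ transverse to level sets; the metric $\mathbf{g}$ was constructed precisely so that $\{\nabla L(c)(x)\mid c\in C\}$ spans $T_x\mathrm{Min}(C)$, so $\mathrm{Min}(C)\cap\mathrm{Sys}(\{c_1\})$ is a $1$-dimensional submanifold whose tangent line at each point is spanned by $\nabla L(c_1)$ (this is the only element of $C$ whose gradient is nonzero in a direction tangent to $\mathrm{Sys}(\{c_1\})$ and not killed by eutaxy, since all other curves in $C$ are strictly longer nearby). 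Hence by uniqueness of integral curves, $\gamma$ coincides with this submanifold as long as it is defined.

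Second, staying in $\mathrm{Sys}(\{c_1\})$: I must rule out some other curve $c'$ catching up to $c_1$ in length as we flow. Here is where Lemma \ref{mycor} enters. If at some time $t_0$ a curve $c'\neq c_1$ achieves $L(c')(\gamma(t_0))=L(c_1)(\gamma(t_0))$ for the first time, then at earlier times near $t_0$ we had $L(c')>L(c_1)$ and along $\gamma$ the quantity $L(c')-L(c_1)$ was decreasing to $0$. But flowing along $-\nabla L(c_1)$, the derivative of $L(c_1)$ is $-|\nabla L(c_1)|^2<0$, while the derivative of $L(c')$ is $-\langle\nabla L(c'),\nabla L(c_1)\rangle$. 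The claim is that near $\{L(c_1)=L(c')\}$, Lemma \ref{mycor} forces $L(c')$ to decrease \emph{strictly slower} than $L(c_1)$ along $-\nabla L(c_1)$: the existence of a nonzero $v_{1,c'}$ in the interior of the convex hull of $\nabla L(c_1),\nabla L(c')$ (applicable since either $c_1$ is nonseparating, or we replace the pair by a suitable subset/multicurve as in the alternative form of Lemma \ref{mycor}) means the two gradients are not antiparallel and the angle between them is constrained so that $\langle\nabla L(c'),\nabla L(c_1)\rangle<|\nabla L(c_1)|^2$, i.e.\ $\frac{d}{dt}(L(c')-L(c_1))>0$ at such a point. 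This contradicts $L(c')-L(c_1)$ reaching $0$ from above. Therefore no such $c'$ exists and $c_1$ stays the strict systole.

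\textbf{Main obstacle.} The delicate step is the quantitative comparison of decay rates, i.e.\ converting the qualitative conclusion of Lemma \ref{mycor} (there exists $v_{1,2}$ in the \emph{interior} of the convex hull) into the strict inequality $\langle\nabla L(c'),\nabla L(c_1)\rangle < |\nabla L(c_1)|^2$ exactly on the locus $L(c_1)=L(c')$, and then propagating it to a neighbourhood to get monotonicity of $L(c')-L(c_1)$. One must be careful that Lemma \ref{mycor} is stated for the pair, or a subset of a minimal filling set, or a multicurve, so if $c'$ is separating and not in such a configuration a short additional argument (or appeal to local finiteness, which bounds how many curves can be near-systolic, together with the collar lemma) is needed to reduce to a case Lemma \ref{mycor} covers. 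The uniqueness-of-integral-curves argument for the $\mathrm{Min}(C)$ part is routine ODE theory once the $1$-dimensionality of $\mathrm{Min}(C)\cap\mathrm{Sys}(\{c_1\})$ and the tangent-line identification are in hand from Lemma \ref{themetric} and Lemma \ref{regularitylem}.
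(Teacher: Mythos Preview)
Your treatment of the $\mathrm{Sys}(\{c_1\})$ part is essentially the paper's argument: if some $c'$ were to catch up, at the first contact time Lemma~\ref{mycor} forces $L(c_1)$ to be decreasing strictly faster than $L(c')$ along $-\nabla L(c_1)$, contradicting $L(c')-L(c_1)\searrow 0$. Your worry about the hypotheses of Lemma~\ref{mycor} is reasonable but not a real obstacle, since any curve that could become a systole has pairwise intersection at most one with $c_1$ and the alternative forms of the lemma cover the remaining cases.

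The $\mathrm{Min}(C)$ part, however, contains a genuine error. You assert that $\mathrm{Min}(C)\cap\mathrm{Sys}(\{c_1\})$ is a \emph{one}-dimensional submanifold with tangent line spanned by $\nabla L(c_1)$, and then invoke uniqueness of integral curves. This is false in general: $\mathrm{Sys}(\{c_1\})$ is an open, top-dimensional stratum of $\mathcal{T}_g$, so $\mathrm{Min}(C)\cap\mathrm{Sys}(\{c_1\})$ has the same dimension as $\mathrm{Min}(C)$, which can be as large as $6g-6$ (e.g.\ when $C$ is the systole set at a local maximum of $f_{\mathrm{sys}}$). Your justification---that the other curves in $C$ are ``strictly longer nearby'' and hence their gradients are ``killed by eutaxy''---conflates two unrelated things: the gradients $\nabla L(c)$ for $c\in C$ all contribute to $T_x\mathrm{Min}(C)$ regardless of whether $c$ is the current systole.

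The fix is much simpler than what you attempted, and in fact you already wrote down the key observation before taking the wrong turn: by the very construction of $\mathbf{g}$ in Lemma~\ref{themetric}, $-\nabla L(c_1)(x)$ lies in $T_x\mathrm{Min}(C)$ at \emph{every} point $x\in\mathrm{Min}(C)$, not just at $\gamma(0)$. Hence the tangent vector to $\gamma$ is always in the tangent cone to $\mathrm{Min}(C)$, and by Lemma~\ref{14} (which describes what $\partial\mathrm{Min}(C)$ looks like) the path cannot exit. This is exactly the paper's argument, and it requires no uniqueness-of-ODE step or dimension count.
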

\begin{proof}
The path $\gamma$ cannot leave $\mathrm{Sys}(\{c_{1}\})$, because if it did, it would have to pass through a point $\gamma(t)$ at which $L(c_{1})(\gamma(t))=L(c_{2})(\gamma(t))$ for a curve $c_{2}$ satisfying the conditions of Lemma \ref{mycor}. However, this is not possible, because $L(c_{1})(\gamma(0))<L(c_{2})(\gamma(0))$ and by lemma \ref{mycor}, $L(c_{1})$ is decreasing faster along $\gamma$ than $L(c_{2})$ near where equality occurs.\\

It follows from Lemma \ref{14} that $\gamma$ cannot leave $\mathrm{Min}(C)$, because the tangent vector to $\gamma$ is always in the tangent cone to $\mathrm{Min}(C)$.
\end{proof}

It is a consequence of local finiteness and the definition of $\mathrm{Min}(C)$ that on a neighbourhood of a critical point $p\in \mathrm{Sys}(C)\cap \mathrm{Min}(C)$, $f_{\mathrm{sys}}$ is decreasing in $\mathrm{Min}(C)$ radially away from $p$, where the dimension of $\mathrm{Min}(C)$ at $p$ is equal to the index of $p$. In the case of a local maximum, the index of $p$ is equal to the dimension of $\mathcal{T}_{g}$.\\

Denote by $\mathcal{B}$ a ball in $\mathrm{Min}(C)$ centered on $p$. This ball is chosen small enough so that local finiteness ensures that the systoles at every point within it are contained in $C$, and $f_{\mathrm{sys}}$ is decreasing radially away from $p$ within $\mathcal{B}$.\\

The set $\mathrm{Min}(C)$ contains what will be called \textit{petals}. Each petal corresponds to a curve $c\in C$, and consists of all the points in the intersection $\mathrm{Sys}(\{c\})\cap \mathcal{B}$ as well as the images of all paths of the form $\gamma: [0,a]\rightarrow \mathrm{Min}(C)\cap Sys(\{c\})$, $a\in \mathbb{R}$ with $\gamma(0)\in Sys(\{c\})\cap \mathrm{Min}(C)\cap \mathcal{B}$ and $\dot{\gamma}=-\nabla L(c)$ from Corollary \ref{staysin}. Since gradients are defined here with respect to $\mathbf{g}$, a petal labelled by $c\in C$ is contained in $\mathrm{Sys}(\{c\})\cap \mathrm{Min}(C)$ by Corollary \ref{staysin}.

\begin{corollary}
\label{connectedcor}
$\mathrm{Sys}(\{c\})$ is connected.
\end{corollary}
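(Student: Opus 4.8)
The plan is to show that $\mathrm{Sys}(\{c\})$ is path-connected by connecting any two points to a common reference configuration using the length-decreasing flow along $-\nabla L(c)$ together with a deformation of the remaining hyperbolic structure. First I would recall that $\mathrm{Sys}(\{c\})$ is the set of marked hyperbolic structures on which $c$ is the unique systole; in particular $c$ is shorter than every other curve, so there is an open condition here and the stratum is a (codimension-zero) open subset of $\mathcal{T}_g$. The natural strategy is to fibre $\mathcal{T}_g$ over the $\mathbb{R}_+$-parameter given by $L(c)$: cutting $\mathcal{S}_g$ along $c$ and recording the Fenchel--Nielsen-type data on the complement, together with the length and twist at $c$, gives coordinates in which $\mathrm{Sys}(\{c\})$ is cut out by the inequalities $L(c)<L(c')$ for all $c'\neq c$.

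The key steps, in order, would be: (i) given $x\in \mathrm{Sys}(\{c\})$, run the flow $\dot\gamma=-\nabla L(c)$ (with respect to the metric $\mathbf{g}$ of Lemma \ref{themetric}, or indeed any $\Gamma_g$-equivariant metric — by the Remark after Lemma \ref{mycor} the relevant inequalities are metric-independent); by the argument in Corollary \ref{staysin}, or rather its essential mechanism via Lemma \ref{mycor}, $L(c)$ decreases strictly faster than any competitor $L(c')$ near a would-be equality, so the flow stays inside $\mathrm{Sys}(\{c\})$ and we may decrease $L(c)$ as much as we like; (ii) hence every point of $\mathrm{Sys}(\{c\})$ is connected within the stratum to a point where $L(c)$ is below any prescribed threshold $\varepsilon$; (iii) once $L(c)<\varepsilon$ with $\varepsilon$ below the Margulis constant, by the collar lemma $c$ has an embedded collar of width growing like $\log(1/L(c))$, and the thin-part geometry decouples: the hyperbolic structure is essentially free to vary on $\mathcal{S}_g\setminus c$ subject only to the (open, convex) constraint that $c$ remain the shortest curve, which is automatic once $L(c)$ is small enough because every other curve either crosses the collar (hence is long by the collar lemma) or lies in the complement where lengths are bounded below by a function of the complementary moduli; (iv) the set of such "deep in the $c$-thin part" configurations is connected — it is a product of a Teichm\"uller space of the (possibly disconnected) surface obtained by cutting along $c$ with an interval and a circle of twist/length parameters, all connected — so any two points of $\mathrm{Sys}(\{c\})$ can be flowed into this connected region and joined there.

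The main obstacle I expect is step (iii): making precise that once $L(c)$ is small, $c$ really is the systole for an \emph{open and connected} set of complementary structures, uniformly. One has to rule out the possibility that shrinking $L(c)$ forces some curve in the complement (not crossing the collar) to become even shorter; this is handled by first decreasing $L(c)$ along the flow (which cannot create a shorter curve, since along the flow $c$ stays the systole by construction), and only \emph{then} varying the complementary structure while monitoring lengths. A secondary subtlety is that cutting along $c$ may disconnect $\mathcal{S}_g$ (if $c$ is separating) or produce a once- or twice-punctured surface (if $c$ is nonseparating), so one should phrase the complementary Teichm\"uller space uniformly, e.g. as the Teichm\"uller space of the (possibly disconnected) bordered surface $\mathcal{S}_g \setminus c$ with a boundary-length constraint; connectedness of Teichm\"uller spaces of bordered surfaces of fixed topological type is standard, so this is a bookkeeping issue rather than a genuine difficulty. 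I would present steps (i)--(ii) as the substantive content, citing Lemma \ref{mycor} and Corollary \ref{staysin}, and treat (iii)--(iv) as a short appeal to collar-lemma geometry and connectedness of Teichm\"uller space.
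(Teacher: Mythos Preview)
Your proposal is correct and follows essentially the same approach as the paper: flow along $-\nabla L(c)$ from any point of $\mathrm{Sys}(\{c\})$, use the mechanism of Lemma~\ref{mycor}/Corollary~\ref{staysin} to stay inside the stratum, and then invoke connectedness of the region where $L(c)$ is small. The only difference is packaging: the paper sends the flow all the way to the Weil--Petersson boundary stratum $\mathrm{Min}(\{c\})^{\infty}$ and simply cites its connectedness (it is a Teichm\"uller space of a noded surface), whereas you stop at finite depth in the $c$-thin part and argue connectedness there via the collar lemma and Fenchel--Nielsen coordinates---this makes your steps (iii)--(iv) do by hand what the paper gets for free from the completion, but the content is the same.
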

\begin{proof}
Construct a path $\gamma$ as in Corollary \ref{staysin}. In the limit as $t\rightarrow \infty$, $\gamma(t)$ approaches a point in $\mathrm{Min}(\{c\})^{\infty}$ in the metric completion of $\mathcal{T}_{g}$ with respect to the Weil-Petersson metric. The corollary then follows from the fact that $\mathrm{Min}(\{c\})^{\infty}$ is connected.
\end{proof}

\begin{corollary}[Corollary 3.18 of \cite{MS}]
\label{flowercor}
$\mathrm{Sys}(C)$ is on the boundary of $\mathrm{Sys}(\{c\})$ for every $c\in C$.
\end{corollary}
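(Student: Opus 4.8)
\textit{Proof strategy.} I would prove the stronger statement $\mathrm{Sys}(C)\subseteq\overline{\mathrm{Sys}(\{c\})}$ (note $|C|\geq 2$, since no single curve fills $\mathcal{S}_{g}$, so $\mathrm{Sys}(\{c\})$ and $\mathrm{Sys}(C)$ are distinct strata). Fix $c\in C$ and an arbitrary $x\in\mathrm{Sys}(C)$, work in any $\Gamma_{g}$-equivariant metric (harmless by the remark after Lemma \ref{mycor}), and put $g_{c'}:=\nabla L(c')(x)$ for $c'\in C$; at $x$ all curves of $C$ have the common length $\ell=f_{\mathrm{sys}}(x)$. The idea is to follow the path $\gamma$ with $\gamma(0)=x$, $\dot\gamma(0)=-g_{c}$, and show it enters $\mathrm{Sys}(\{c\})$ immediately; for this I need $L(c)$ to decrease strictly faster than each $L(c')$, $c'\in C\setminus\{c\}$, along $\gamma$ at $t=0$. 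Since $L(c)(x)=L(c')(x)$ and since $\{c,c'\}$ is either a two-component multicurve (when $i(c,c')=0$) or a pair of nonseparating curves (when $i(c,c')=1$, as a separating curve is null-homologous and meets every curve an even number of times), Lemma \ref{mycor} applies and furnishes a nonzero $v=\alpha g_{c}+\beta g_{c'}$ with $\alpha,\beta>0$ and $v\cdot g_{c}=v\cdot g_{c'}>0$.

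Equating and cancelling yields $\alpha(|g_{c}|^{2}-g_{c}\cdot g_{c'})=\beta(|g_{c'}|^{2}-g_{c}\cdot g_{c'})$, so the two sides have a common sign; they cannot both be negative (that would give $g_{c}\cdot g_{c'}>\max(|g_{c}|^{2},|g_{c'}|^{2})$, against Cauchy--Schwarz), and if they both vanish then $g_{c}\cdot g_{c'}=|g_{c}|^{2}=|g_{c'}|^{2}$, forcing $g_{c}=g_{c'}$. Setting that coincidence aside, one gets $g_{c}\cdot g_{c'}<\min(|g_{c}|^{2},|g_{c'}|^{2})$, hence $(-g_{c})\cdot g_{c}<(-g_{c})\cdot g_{c'}$, i.e.\ $\tfrac{d}{dt}\big(L(c')-L(c)\big)(\gamma(t))\big|_{t=0}>0$. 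As $C$ is finite this holds for all $c'\in C\setminus\{c\}$ simultaneously, so $L(c)(\gamma(t))<L(c')(\gamma(t))$ for all such $c'$ and all small $t>0$; meanwhile local finiteness gives a neighbourhood of $x$ on which every curve not in $C$ is longer than $\ell$ by a fixed amount, while $L(c)(\gamma(t))\to\ell$. Therefore $c$ is the unique systole at $\gamma(t)$ for small $t>0$, i.e.\ $\gamma(t)\in\mathrm{Sys}(\{c\})$, and letting $t\to 0^{+}$ shows $x\in\overline{\mathrm{Sys}(\{c\})}$. Since $x$ and $c$ were arbitrary, this is the claim.

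The step I expect to be the real obstacle is disposing of the excluded coincidence $\nabla L(c)(x)=\nabla L(c')(x)$ for distinct $c\neq c'$, where the first-order argument collapses. I would rule it out by showing that the differentials of distinct length functions are never proportional: for any $c\neq c'$ there is a curve $c''$ with $i(c'',c)\geq 1$ and $i(c'',c')=0$ (for $i(c,c')=1$ one may even take $c''=c'$), and along the Fenchel--Nielsen twist about $c''$ the length $L(c')$ is constant while $L(c)$ is strictly convex (\cite{Kerckhoff}, \cite{Wolpert}), so $dL(c)(x)$ and $dL(c')(x)$ have different kernels unless $x$ is critical for that one-parameter restriction of $L(c)$ --- a degeneracy one removes by varying $c''$. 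Pinning this down cleanly is the one genuinely technical point; granted it, the construction above gives $\mathrm{Sys}(C)\subseteq\overline{\mathrm{Sys}(\{c\})}$ for every $c\in C$, recovering Corollary 3.18 of \cite{MS}.
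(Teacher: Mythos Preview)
Your argument is correct and follows essentially the same route as the paper: move from $x\in\mathrm{Sys}(C)$ in the direction $-\nabla L(c)(x)$, use Lemma~\ref{mycor} to see that $L(c)$ drops strictly faster than every other $L(c')$, and invoke local finiteness to rule out new systoles. The paper compresses all of this into a single sentence (``$-\nabla L(c)(x)$ is in the tangent cone to $\mathrm{Sys}(\{c\})$''), whereas you spell out the linear-algebra step extracting $g_c\cdot g_{c'}<|g_c|^2$ from the convex-hull conclusion of Lemma~\ref{mycor}; that computation is sound.

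The one place you flag as the ``real obstacle'' --- the coincidence $\nabla L(c)(x)=\nabla L(c')(x)$ --- is already dealt with in the paper's toolkit: Remark~\ref{correspondencerem} records that $\nabla L(c_1)$ and $\nabla L(c_2)$ are everywhere linearly independent for distinct $c_1,c_2$, citing Lemma~3.3 of \cite{MS}. So you need not build your own twist argument here; you can simply quote that fact and the degenerate case never arises.
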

\begin{proof}
It follows from local finiteness and Lemma \ref{mycor} that for a point $x$ in $\mathrm{Sys}(C)$,  $-\nabla L(c)(x)$ is in the tangent cone to $\mathrm{Sys}(\{c\})$ for every $c\in C$.
\end{proof}

Recall from Corollary \ref{defncor1} that $\mathrm{Min}(\{c\})^{\infty}$ is adherent to $\mathrm{Min}(C)$ for every $c\in C$. Since $L(c)$ is a convex function with respect to the Weil-Petersson metric, $L(c)$ increases with Weil-Petersson distance from $\mathrm{Min}(\{c\})^{\infty}$. There are estimates due to Wolpert, for example in \cite{Wo3}, of how fast $L(c)$ increases with Weil-Petersson distance from $\mathrm{Min}(\{c\})^{\infty}$. Such estimates suggest that a decomposition of $\mathrm{Min}(C)$ into regions on which each of the curves in $C$ are shortest is analogous to a Voronoi decomposition of $\mathrm{Min}(C)$ centered around $\{\mathrm{Min}(\{c\})^{\infty}\ |\  c\in C\}$. This is illustrated in Figure \ref{starlike}.\\

\begin{figure}
\centering
\includegraphics[width=6cm]{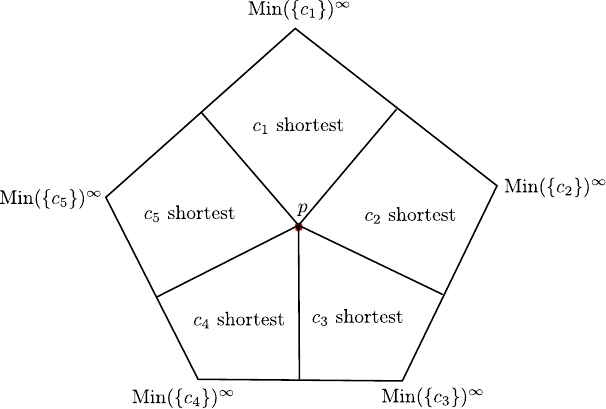}
\caption{A cartoon version of $\mathrm{Min}(C)$, showing how the decomposition into regions on which each of the curves in $C$ are the shortest elements of the set $C$ behaves like a Voronoi decomposition of $\mathrm{Min}(C)$ centered around $\{\mathrm{Min}(\{c\})^{\infty}\ |\  c\in C\}$.}
\label{starlike}
\end{figure}

The next proposition will be used in the proof of Lemma \ref{neededforduality}. It gives a geometric condition that can be used to characterise a subset of a set $C$ that fills the same subsurface of $\mathcal{S}_{g}$ as $C$.

\begin{proposition}
\label{29}
Suppose $C$ is a set of curves and $x\in \mathcal{T}_{g}$ is not a generalised minimum of $C$. Suppose also that a nonzero $v_{C}(x)$ exists that cannot be chosen to be in the convex hull of $\{\nabla L(c)(x)\ |\ c\in C\}$. Then the vertices of a face $f$ of the convex hull of $\{\nabla L(c)(x)\ |\ c\in C\}$ closest to $v_{C}$ are the gradients of lengths of curves in a set $C(f)$ that fill the same subsurface of $\mathcal{S}_{g}$ as $C$.
\end{proposition}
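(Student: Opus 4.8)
The plan is to first turn the statement into a piece of convex geometry inside $W:=\mathrm{Span}\{\nabla L(c)(x)\ |\ c\in C\}$, and then to use Theorem \ref{Thurstonprop} together with Riera's theorem to rule out the bad case. Fix the metric used for the gradients, let $K$ be the convex hull of $\{\nabla L(c)(x)\ |\ c\in C\}$ (so $K\subset W$), and let $v_{C}^{\parallel}$ be the orthogonal projection of $v_{C}(x)$ onto $W$; then $\langle\nabla L(c)(x),v_{C}^{\parallel}\rangle=v_{C}(L(c))(x)=r>0$ for all $c\in C$. The hypothesis that no admissible $v_{C}(x)$ lies in $K$ forces $v_{C}^{\parallel}\notin K$, and the existence of a nonzero $v_{C}(x)$ forces $0\notin K$ (a relation $0=\sum a_{c}\nabla L(c)(x)$ with $a_{c}\ge 0$, $\sum a_{c}=1$ would give $0=\sum a_{c}v_{C}(L(c))(x)>0$). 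Let $w\in K$ be the nearest point to $v_{C}^{\parallel}$, write $w=\sum_{c}a_{c}\nabla L(c)(x)$ with $a_{c}\ge 0$, $\sum a_{c}=1$, put $u:=v_{C}^{\parallel}-w\neq 0$ and $h:=\langle u,w\rangle$. The variational property of the nearest point gives $\langle u,\nabla L(c)(x)\rangle\le h$ for all $c$, with equality on the support $C(f):=\{c:a_{c}>0\}$ of $w$; this $C(f)$ is the vertex set of the minimal face of $K$ through $w$ and is contained in the vertex set of every face of $K$ closest to $v_{C}$, so since the filling property passes to larger subsets of $C$ it suffices to show $C(f)$ fills $\Sigma_{C}$, the subsurface filled by $C$. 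Dualising the inequality, $\langle w,\nabla L(c)(x)\rangle=r-h$ for $c\in C(f)$ and $\ge r-h$ for all $c\in C$, and $\|w\|^{2}=\sum_{c\in C(f)}a_{c}\langle w,\nabla L(c)(x)\rangle=r-h>0$. Hence $w=\nabla\big(\sum_{c\in C(f)}a_{c}L(c)\big)(x)$ is the gradient of a length function supported on $C(f)$, and $w$ is a direction in which every curve of $C$ strictly increases at $x$.

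Suppose, for contradiction, that $C(f)$ fills only a proper subsurface $\Sigma'\subsetneq\Sigma_{C}$, with boundary multicurve $m':=m(C(f))$; in particular $C(f)$ does not fill $\mathcal{S}_{g}$. I would then argue, in the spirit of the proof of Corollary \ref{defncor1} and of Propositions 3.13, 3.14 and 3.20 of \cite{MS}, combining three ingredients: Theorem \ref{Thurstonprop} applied to the non-filling set $C(f)$; Riera's theorem \cite{Riera}, which makes the Weil--Petersson gradient of $L(c_{\partial})$ for $c_{\partial}\in m'$ a direction increasing every curve of $C(f)$, so that $L(A,C(f))$ attains its infimum over $\overline{\mathcal{T}}_{g}$ exactly where $m'$ is pinched; and the collar lemma, controlling the curves of $C\setminus C(f)$ that cross $m'$. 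The target is to correct the direction $w\in K$ --- which increases all of $C$, but at merely comparable rates --- to a convex combination $z=\sum_{c}b_{c}\nabla L(c)(x)\in K$ with $\langle z,\nabla L(c)(x)\rangle$ equal to a common positive value for all $c\in C$: such a $z$ is an admissible $v_{C}(x)$ lying in $K$, contradicting the hypothesis. The correction should remove the excess rate of $w$ on the curves of $C\setminus C(f)$ that point out of $\Sigma'$ by adding to $w$ vectors tangent to the common level sets of the $C(f)$-lengths (which fix the supporting length function, hence $w$'s rates on $C(f)$), while staying inside $K$ by keeping the expression a convex combination of the $\nabla L(c)(x)$. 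Equivalently, this is the assertion that the locus $E(C,d(x))$ through $x$ would be \emph{balanced} at $x$ in the sense of Subsection \ref{subminima} (using $x\notin\mathrm{Min}(C)$, which follows from $0\notin K$), whereas the hypothesis forces it to be semi-balanced or unbalanced there.

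I expect this correction step to be the main obstacle: it is precisely where the topological data (filling of subsurfaces, the collar lemma near $m'$) has to be reconciled with the convex geometry of the gradients. Two things need care. First, the correction must not disturb the equal rates on the curves of $C$ already lying inside $\Sigma'$; handling this will likely require a version of Lemma \ref{mycor} for the multicurve $m'$, which may be separating and need not be part of a minimal filling set. Second, Riera's theorem is a Weil--Petersson statement while $K$ is formed in the metric $\mathbf g$ of Lemma \ref{themetric} (or an arbitrary $\Gamma_{g}$-equivariant metric), so one must pass between metrics using the metric-independence of the balanced/semi-balanced/unbalanced trichotomy (Remark \ref{balancedeverywhere} and Proposition 3.19 of \cite{MS}). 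A cleaner route, if it can be made to work, would bypass the correction entirely: show directly that whenever the vertex set $C(g)$ of a face $g$ of $K$ fails to fill $\Sigma_{C}$ there is a face $g'$ of $K$ with $C(g')$ filling $\Sigma_{C}$ and at least as close to $v_{C}$ as $g$, so that a closest face automatically has the filling property; the topological input then collapses to a single application of Theorem \ref{Thurstonprop}.
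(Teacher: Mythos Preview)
Your convex-geometric setup is correct and clean: the projection $v_{C}^{\parallel}$, the nearest point $w\in K$, the identification of $C(f)$ as the support of $w$, and the inequality $\langle w,\nabla L(c)(x)\rangle\ge r-h>0$ for all $c\in C$ are all fine. The gap is exactly where you say it is, and I do not think it can be closed along the line you propose. The vectors $z\in W$ with $\langle z,\nabla L(c)(x)\rangle$ independent of $c$ form the orthogonal complement in $W$ of $\mathrm{Span}\{\nabla L(c_{i})(x)-\nabla L(c_{j})(x)\}$; in the generic (linearly independent) case this is the line $\mathbb{R}v_{C}^{\parallel}$, which meets the affine hyperplane $\{\langle v_{C}^{\parallel},\cdot\rangle=r\}\supset K$ in a single point, and that point is excluded from $K$ by hypothesis. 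The assumption that $C(f)$ fails to fill $\Sigma_{C}$ is information about isotopy classes of curves, not about the configuration of gradients at the fixed point $x$; none of the ingredients you list (Theorem~\ref{Thurstonprop}, Riera, the collar lemma) produces a vector that is simultaneously a convex combination of the $\nabla L(c)(x)$ and orthogonal to all their differences. Your ``cleaner route'' at the end has the same defect: comparing distances of faces of $K$ to $v_{C}$ is again a statement at $x$, and you give no mechanism by which the topological failure of $C(f)$ to fill forces a filling face to be at least as close.

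The paper's argument is quite different in character. It invokes Proposition~3.20 of \cite{MS} for filling $C$ and then indicates the adaptation for non-filling $C$: one works not at $x$ but along the locus, flowing toward $\mathrm{Min}(C(f))$ (replaced by $\mathrm{Min}(C(f))^{\infty}$ when $C(f)$ does not fill $\mathcal{S}_{g}$), and uses the estimate from Riera's formula in \cite{Wo3} that $|v_{C(f)}|\to 0$ there. It is in this limit that the topological hypothesis becomes geometric --- curves of $C$ crossing $m(C(f))$ get long by the collar lemma while curves of $C(f)$ get short --- so the contradiction with $C(f)$ not filling $\Sigma_{C}$ is obtained asymptotically, not at $x$. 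If you want to make your write-up self-contained, the missing idea is this passage to the boundary stratum, together with the persistence of the (un)balanced property along the locus from Remark~\ref{balancedeverywhere}.
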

\begin{proof}
When $C$ is a filling set, this is Proposition 3.20 of \cite{MS}. When $C$ does not fill, the proof of Proposition 3.20 of \cite{MS} also applies with $\mathrm{Min}(C(f))$ replaced by $\mathrm{Min}(C(f))^{\infty}$ and by observing that the length of $v_{C(f)}$ approaches zero near $\mathrm{Min}(C(f))^{\infty}$ by an estimate derived from Riera's formula given in \cite{Wo3}.
\end{proof}

Some polytopes will now be defined. These polytopes will be used to encode adjacency information between petals in the model for $\mathrm{Min}(C)$ that will be discussed in the next subsection.

\begin{definition}[$\mathcal{V}(x)$ and $\mathcal{D}(x)$]
Suppose $C$ is a filling set, and $x\in \mathrm{Sys}(C)$. A ``Voronoi-like'' cell decomposition $\mathcal{V}(x)$ of the unit tangent space to $\mathcal{T}_{g}$ at $x$ has top-dimensional cells labelled by different curves in $C$. The interior of the cell labelled by $c\in C$ consists of all the unit vectors in $T_{x}\mathcal{T}_{g}$ for which the inner product with $-\nabla L(c)$ is greater than the inner product with $-\nabla L(c')$ for every $c'\in C\setminus \{c\}$. Lower dimensional cells of $\mathcal{V}(x)$ are labelled by subsets of $C$, where $v\in T_{x}\mathcal{T}_{g}$ is in the cell labelled by $C'$ if the inner product of $v$ with every element of $\{-\nabla L(c)\ |\ c\in C'\}$ is equal and greater than the inner product of every element of $\{-\nabla L(c)\ |\ c\in C\setminus C'\}$.\\

A ``Delaunay-like'' cell decomposition $\mathcal{D}(x)$ of the unit tangent space to $\mathcal{T}_{g}$ at $x$ is the dual to $\mathcal{D}(x)$. The vertices of $\mathcal{D}(x)$ are labelled by curves in $C$.\\
\end{definition}

Although $\mathcal{V}(x)$ and $\mathcal{D}(x)$ are topological duals, they might not always be geometric duals. When not, it can be shown that this implies the existence of unbalanced or semi-balanced strata adjacent to $x$.


\begin{remark}[Cells of $\mathcal{V}(p)$ and strata adjacent to a local maximum $p\in \mathrm{Sys}(C)$ of $f_{\mathrm{sys}}$]
\label{correspondencerem}
Suppose $\{c_{1}, c_{2}\}\subset C$. The vector fields $\nabla L(c_{1})$ and $\nabla L(c_{2})$ on $\mathcal{T}_{g}$ are everywhere linearly independent as a consequence of Lemma 3.3 of \cite{MS}. Linear independence together with the pre-image lemma guarantee the existence of an embedded submanifold $E(\{c_{1}, c_{2}\})$ containing $p$. The codimension of $E(\{c_{1}, c_{2}\})$ is one. Consequently, $\{c_{1}, c_{2}\}\subset C$ is the label of a stratum adjacent to $p$ iff $\{c_{1}, c_{2}\}$ is the label of a cell of $\mathcal{V}(p)$. An analogous statement holds when $\{c_{1}, c_{2}\}$ is replaced by any subset $C'$ of $C$ for which the gradients of the lengths of curves are linearly independent at $p$. \\

When the vertices of a cell of $\mathcal{V}(p)$ do not represent linearly independent vectors, a cell of $\mathbf{\mathcal{D}(p)}$ labelled by a subset $C'$ of $C$ might not determine a stratum, because equality of curve lengths might only be achievable to first order.\\

It is known from the proof of Theorem 4.5 of \cite{MS} that every vector in the cell of $\mathcal{V}(p)$ is in the tangent cone to a stratum labelled by a set of curves that fill the same subsurface of $\mathcal{S}_{g}$ as the set of curves labelling the cell. For sufficiently small $\epsilon$, a cell decomposition of a codimension 1 sphere of radius $\epsilon$ centered on $p$ inherits a cell decomposition from the stratification. This cell decomposition is very similar to the cell decomposition $\mathcal{V}(p)$ of the unit tangent bundle, the only potential difference is that some cells of the former may be collapsed in the limit as $\epsilon$ approaches zero.
\end{remark}


\subsection{Polytopal regularity, folding and equivalence classes}
\label{backgroundforthm}

The concepts needed for Theorem \ref{onlyfaces} will now be explained and motivated one by one. \\

\textbf{Polytopal Regularity.} Recall from Lemma \ref{14} that a point in $\mathcal{T}_{g}$ on $\partial \mathrm{Min}(C)$ is in $\mathrm{Min}(C')$ for some $C'\subsetneq C$. \\

\begin{theorem}[Theorem 15 of \cite{SchmutzMorse}]
\label{15}
Suppose $x\in \mathcal{T}_{g}$ is in $\mathrm{Min}(C'')\subset \overline{\mathrm{Min}(C)}$ and $C''\subset C$ is not contained in any other subset of $C$ satisfying this condition. Define 
\begin{equation*}
\mathcal{G}=\{C\}\cup\{C'\ |\ C''\subset C'\subset C \text{ and }\exists v\in T_{x}\mathcal{T}_{g}\text{ such that }
vL(c)(x)=0 
\end{equation*}
\begin{equation*}
\forall c\in C'\text{ and }vL(c)(x)>0 \ \forall c\in C\setminus C'\}
\end{equation*}
If $x$ is $C'$-regular for every $C'\in \mathcal{G}$, there exists a $C$-regular neighbourhood $U$ of $x$ in $\mathrm{Conv}(C)$ such that, in $U$
\begin{equation*}
\mathrm{Min}(C)\cup\partial \mathrm{Min}(C)=\cup_{C'\in \mathcal{G}}\mathrm{Min}(C')
\end{equation*}
where this is a disjoint union.
\end{theorem}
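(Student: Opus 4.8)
The statement is a local normal form for $\mathrm{Min}(C)\cup\partial\mathrm{Min}(C)$ near the boundary point $x$, and the plan is to extract it from the convex geometry of the configuration $\{\nabla L(c)(y)\mid c\in C\}$ as $y$ varies over a small neighbourhood of $x$, feeding in Lemma \ref{14} to control the passage to the boundary and the regularity hypotheses to tame the map $\phi$. First I would record the dictionary implicit in the definitions: $y\in\mathrm{Min}(C')$ exactly when $0$ lies in the relative interior of the convex hull of $\{\nabla L(c)(y)\mid c\in C'\}$ — equivalently, some strictly positive combination of those gradients vanishes at $y$ — while $y\in\mathrm{Min}(C')\cup\partial\mathrm{Min}(C')$ exactly when $0$ lies in that convex hull. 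Since positive vanishing combinations may be averaged, every $y\in\mathrm{Conv}(C)$ has a well-defined \emph{largest} subset $S(y)\subseteq C$ supporting such a combination; this $S(y)$ is the largest subset of $C$ with $y\in\mathrm{Min}(S(y))$ — essentially Lemma \ref{14} in reverse — and the assignment $y\mapsto S(y)$ partitions a neighbourhood of $x$ in $\mathrm{Min}(C)\cup\partial\mathrm{Min}(C)$ into pieces, the piece $\{S=C'\}$ lying inside $\mathrm{Min}(C')$.

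Second I would determine which subsets occur. Using the stability of the relative-interior condition under the $C''$-regularity hypothesis (note $C''\in\mathcal{G}$, realised by the derivation of Lemma \ref{14}), one checks that $C''\subseteq S(y)\subseteq C$ for every $y$ near $x$, so only $C'$ with $C''\subseteq C'\subseteq C$ can appear. To see that the occurring ones are exactly the members of $\mathcal{G}$, note that ``$S(y)=C'$'' means $0$ lies in the relative interior of the hull of the $C'$-gradients and no curve of $C\setminus C'$ enters any positive vanishing combination; by the elementary fact that a finite point set misses $0$ in its convex hull iff some hyperplane through the origin has the whole set strictly on one side, the second clause is equivalent — after projecting onto the orthogonal complement of $\operatorname{span}\{\nabla L(c)(y)\mid c\in C'\}$ — to the existence of a derivation $v\in T_{y}\mathcal{T}_{g}$ that annihilates the $C'$-gradients and is strictly positive on the $(C\setminus C')$-gradients. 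This is the condition defining $\mathcal{G}$, now read at $y$; a limiting argument (along a sequence $y_{k}\to x$ with $S(y_{k})=C'$, pass to a limit of the derivations and use the regularity at $x$ to preserve strict positivity; conversely, follow such a derivation off $x$ as in Lemma \ref{14} and Corollary \ref{staysin}) shows the condition is realisable arbitrarily near $x$ precisely when it holds at $x$. This identifies the index set as $\mathcal{G}$, with $C$ itself always present.

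Third I would invoke $C'$-regularity of $x$ for every $C'\in\mathcal{G}$ to upgrade the partition to the stated decomposition and to produce the $C$-regular neighbourhood $U$. Local constancy of the fibre dimension of the analogue of $\phi$ for $C'$ makes each $\mathrm{Min}(C')$ a $C^{1}$ embedded submanifold near $x$ (Corollary 13 of \cite{SchmutzMorse}, i.e.\ Lemma \ref{regularitylem}), of dimension $\dim\operatorname{span}\{\nabla L(c)(x)\mid c\in C'\}$; this is also what rules out the only obstruction to disjointness, namely a point of $\mathrm{Min}(C')$ whose maximal support strictly contains $C'$ — near such a point that fibre dimension would have to jump. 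Shrinking $U$ so the finitely many relevant fibre dimensions are each constant on $U$ then makes $U$ itself $C$-regular.

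The step I expect to be the main obstacle is this last one: keeping straight which notion of regularity is being used where (constancy of $\dim\phi_{C'}^{-1}$ \emph{on} $\mathrm{Conv}(C')$ is far weaker than constancy on an open subset of $\mathcal{T}_{g}$) and verifying that $C'$-regularity for all $C'\in\mathcal{G}$ is exactly enough to make the pieces assemble into a genuine stratified neighbourhood — closures of pieces picking up only strata of larger support — while not being stronger than needed. Concretely this amounts to the book-keeping of how the faces of the orthant $\mathbb{R}_{+}^{|C|}$, indexed by subsets of $C$, push forward under $\phi$ to glue $\mathrm{Min}(C)$ to its boundary strata.
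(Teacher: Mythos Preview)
The paper does not prove this statement: Theorem~\ref{15} is quoted verbatim from \cite{SchmutzMorse} (as the label ``Theorem~15 of \cite{SchmutzMorse}'' indicates) and is used in this paper only as background to formulate the notion of polytopal regularity. There is therefore no proof in the paper against which to compare your attempt.

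That said, your plan is the natural one and aligns with Schmutz Schaller's setup as summarised in Section~\ref{subminima}: translating membership in $\mathrm{Min}(C')$ into ``$0$ lies in the relative interior of the convex hull of $\{\nabla L(c)(y)\mid c\in C'\}$'', introducing the maximal support $S(y)$ via averaging of positive vanishing combinations (this is exactly the content of Lemma~\ref{14}), and then reading off which $C'$ occur near $x$ as the separating-hyperplane condition defining $\mathcal{G}$. Your identification of the delicate point is also accurate: disjointness of the $\mathrm{Min}(C')$'s in $U$ amounts to showing that, near $x$, a point of $\mathrm{Min}(C')$ cannot have $S(y)\supsetneq C'$, and this is precisely where the hypothesis that $x$ is $C'$-regular for every $C'\in\mathcal{G}$ enters --- a jump in $S$ would force a jump in the fibre dimension of $\phi_{C'}$. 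One small thing to be careful about in your limiting argument for the index set: passing to a limit of derivations $v_{k}$ with $v_{k}L(c)>0$ on $C\setminus C'$ only gives $vL(c)\ge 0$ at $x$, so you do need the regularity hypothesis (not just compactness) to retain strict positivity, as you note.
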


Theorem \ref{15} is used to ensure that $\mathrm{Min}(C)$ behaves like a polytope, with faces given by sets of minima of subsets of $C$. Even when $\mathrm{Min}(C)$ is pinched, the author is not aware of any examples in which regularity breaks down so badly that it is not possible to make sense of the notion of ``faces'' of $\mathrm{Min}(C)$. Perhaps such examples exist in Teichm\"uller spaces of large genus surfaces. \\

The set of minima $\mathrm{Min}(C)$ will be said to satisfy \textit{polytopal regularity} if the conditions of Theorem \ref{15} are satisfied at every point of $\overline{\mathrm{Min}(C)}$. In addition, for every filling $C'\subset C$, the dimension of the span of $\{\nabla L(c)\ |\ c\in C'\}$ is required to be constant on $\mathrm{Min}(C)$ away from generalised minima of $C'$, and constant over the set of generalised minima of $C'$.\\

\textbf{Equivalence classes of strata.} For a critical point $p\in \mathrm{Sys}(C)$, it can happen that the number of elements of $C$ is much larger than the dimension of $\mathrm{Min}(C)$. Unlike strata, sets of minima are not uniquely labelled by sets of curves. An example is given by the set of curves in Figure \ref{genus2example1}. For this reason, an equivalence relation $\sim$ on strata is defined, where $\mathrm{Sys}(C_{1})\sim \mathrm{Sys}(C_{2})$ if $\mathrm{Min}(C_{1})=\mathrm{Min}(C_{2})$.\\

\begin{example}
\label{schmutzexample}
Of the six curves shown in Figure \ref{genus2example1}, any filling subset determines the same set of minima. Note that any filling subset gives the same horizon map. It is also known \cite{SchmutzMorse} that this set of minima is a cell of dimension 3 on the boundary of the set of minima labelled by the set of systoles of the Bolza surface. As discussed in the last section of \cite{MS}, every filling subset of this set of curves determines a stratum adjacent to the maximum of $f_{\mathrm{sys}}$ given by the Bolza surface.

\begin{figure}
\centering
\includegraphics[width=8cm]{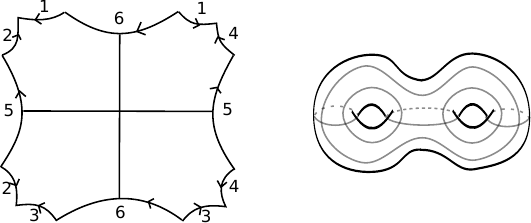}
\caption{This figure is taken from \cite{MS}, and the example was first mentioned in \cite{SchmutzMorse}. The edges on the fundamental domain on the left lie along the six curves shown on the right. The numbers on the edges of the fundamental domain show the edge pairings. A critical point of $f_{\mathrm{sys}}$ occurs when the six curves shown have the same length and intersect in right angles.}
\label{genus2example1}
\end{figure}

\end{example}

\textbf{Folding.} Recall from Remark \ref{correspondencerem} that $\mathcal{V}(p)$ determines the set of strata adjacent to a critical point $p$, except when a cell is labelled by a set of curves $C'$ whose gradients at $p$ are linearly dependent. In the linear dependent case, it is possible that equality of the lengths of curves in $C'$ only holds to first order in the direction of $v_{C'}(p)$. In this case, there is a point $x$ near $p$, the plane containing the tips of the vectors $\{-\nabla L(c)(p)\ | \ c\in C'\}$ needs to be folded to contain the tips of the vectors $\{-\nabla L(c)(x)\ | \ c\in C'\}$. This is illustrated in Figure \ref{folding1} on the left. The right of Figure \ref{folding1} shows the two resulting strata with the same tangent cone at $p$.\\

\begin{figure}
\centering
\includegraphics[width=10cm]{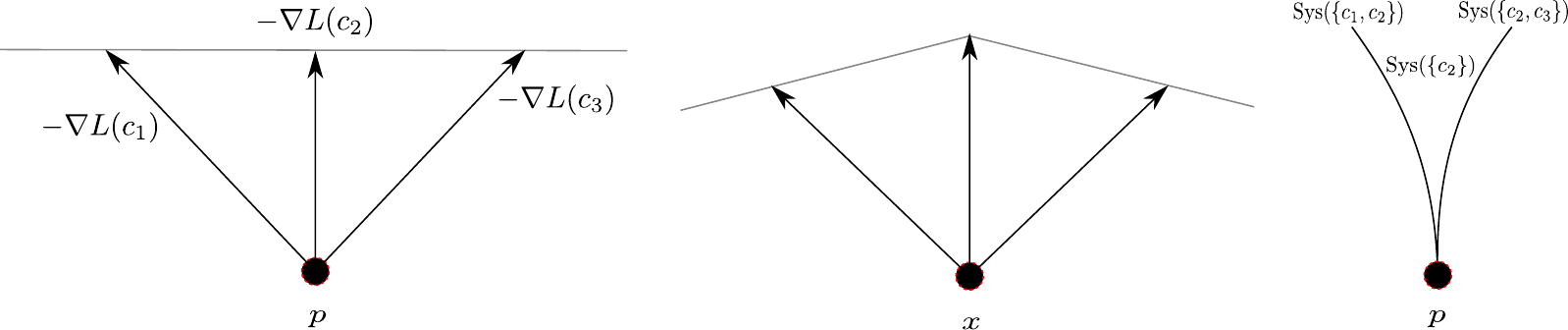}
\caption{The left part of the figure shows a set of linearly dependent gradients labelling a cell of $\mathcal{V}(p)$. At a point $x$ near $p$, these gradients are shown in the middle, and the corresponding strata are shown on the right.}
\label{folding1}
\end{figure}

At $p$, one might have linearly dependent gradients as in Figure \ref{folding1} that describe a facet of $\mathcal{D}(p)$. The set of curves labelling this facet however split into subsets that label different strata with the same tangent cone at $p$. Suppose $C_{1}$ and $C_{2}$ are subsets of $C'$ that label 1-dimensional strata adjacent to $p$. In such a case, the face of $\mathcal{D}(p)$ labelled by $C'$ will be said to be \textit{folded} along $C_{1}\cap C_{2}$, with \textit{folded faces} labelled by $C_{1}$ and $C_{2}$.\\

The point $p$ cannot be a minimum of $C_{1}$ or $C_{2}$ because the vectors $-v_{C_{1}}(p)$ and $-v_{C_{2}}(p)$ are in the tangent cones to the strata $\mathrm{Sys}(C_{1})$ and $\mathrm{Sys}(C_{2})$ respectively. Since $\mathrm{Sys}(\{c_{1}\})$ and $\mathrm{Sys}(\{c_{2}\})$ are 1-dimensional at $p$, polytopal regularity then ensures that $\{\nabla L(c)(x)\ | \ c\in C_{1}\}$ spans $T_{x}\mathcal{T}_{g}$ at every point of $\mathrm{Min}(C)$ that is not a minimum of $C_{1}$. Similarly for $C_{2}$. However, the point $p$ \textit{is} a minimum of $C_{1}\cup C_{2}$ or of $C'$. When assuming polytopal regularity, it follows that at a minimum $x$ of $C'$ in $\mathrm{Min}(C)$, $\{\nabla L(c)(x)\ | \ c\in C'\}$ spans $T_{x}\mathcal{T}_{g}$, so $\mathrm{Min}(C')$ has dimension equal to that of $\mathcal{T}_{g}$ or $\mathrm{Min}(C)$. It will be shown that the set of minima $\mathrm{Min}(C')$ has a pair of facets, $\mathrm{Min}(C_{1})$ and $\mathrm{Min}(C_{2})$ with common face $\mathrm{Min}(C_{1}\cap C_{2})$. \\

\textbf{The map }$\boldsymbol{\psi}$. Suppose $p$ is a local maximum of $f_{\mathrm{sys}}$. Then $T_{p}\mathrm{Min}(C)\simeq \mathbb{R}^{5g-6}$. By abuse of notation, $\mathcal{D}(p)$ will also be used to refer to a polytope in $\mathbb{R}^{6g-6}\simeq T_{p}\mathrm{Min}(C)$ with vertices given by the negative gradients $\{-\nabla L(c)(p)\ |\ c\in C\}$. Assuming $\mathrm{Min}(C)$ satisfies polytopal regularity, a map $\psi$ from the polytope $\mathcal{D}(p)$ to a bordification of $\mathrm{Min}(C)$ will be constructed. It will follow from Theorem \ref{onlyfaces} that the map $\psi$ is a homeomorphism in the interior of $\mathcal{D}(p)$.\\

First of all, $\psi(0)=p$. To define the map $\psi$ on other points of $\mathcal{D}(p)$, specific choices will be made for writing vectors as linear combinations of the vectors making up the vertices, and flowlines constructed.\\

Every point in a facet of $\mathcal{D}(p)$ with vertices $\{-\nabla L(c)(p)\ |\ c\in C'\subsetneq C\}$ can be written as a nonnegative linear combination of $\{-\nabla L(c)(p)\ |\ c\in C'\subsetneq C\}$. When the vertices $\{-\nabla L(c)(p)\ |\ c\in C'\subsetneq C\}$ are not linearly independent, make a continuous choice of how to write vectors as linear combinations of the vertices. When the facet is folded along $C''\subset C'$, choices are made that respect the folding. This means that the convex hull of $\{-\nabla L(c)(p)\ |\ c\in C'\}$ is cut into two pieces along the convex hull of $\{-\nabla L(c)(p)\ |\ c\in C''\}$; suppose one piece is the convex hull of $\{-\nabla L(c)(p)\ |\ c\in C_{1}\subset C'\}$ and the other piece is the convex hull of $\{-\nabla L(c)(p)\ |\ c\in C_{2}\subset C'\}$. Then a vector in $\mathcal{D}(p)$ in the convex hull of $\{-\nabla L(c)(p)\ |\ c\in C_{1}\subset C'\}$ is written as a nonnegative linear combination of $\{-\nabla L(c)(p)\ |\ c\in C_{1}\subset C'\}$. Similarly for $C''$ and $C_{2}$.\\

Every vector in the interior of the facet labelled by $C'$ is the negative gradient of a length function $L(A,C')$ for some $A$ with all entries nonnegative.\\

\textbf{Flowcones.} A \textit{flowcone} labelled by $C'\subset C$ corresponding to a folded face of a facet of $\mathcal{D}(p)$ consists of all the points in the image of paths of the form $\alpha:\mathbb{R}_{\geq 0}\rightarrow \mathrm{Min}(C)$ with $\alpha(0)=p$ and $\dot{\alpha}(t)=-\nabla L(A,C')$. Here $A$ is a tuple with the property that one of the vectors in the facet labelled by $C'$ is written as the linear combination of $\{-\nabla L(c)(p)\ |\ c\in C'\}$ given by $-\nabla L(A,C')(p)$. Let $C''\subset C'$ be the subset of $C'$ for which the corresponding entries in $A$ are nonzero. The endpoint of the path $\alpha$ is either in $\mathrm{Min}(C'')$ if $C''$ fills, or in $\mathrm{Min}(C'')^{\infty}$ if it does not.\\

As the metric $\mathbf{g}$ from Lemma \ref{themetric} is assumed, it follows from Lemma \ref{14} that the flowcone labelled by $C'$ is contained in $\overline{\mathrm{Min}(C)}$. \\

The map $\psi$ takes $\mathcal{D}(p)$ to a union of flowcones, with the choices described above.\\

In Theorem \ref{onlyfaces} it will be seen that for filling $C'$, any remaining choices in defining flowcones are not important. When $C'$ does not fill, for any choice of $C''$, $p$ is not a minimum of $C''$. This is because $-v_{C''}$ is in the tangent cone to $E(C')\subset E(C'')$ (this uses the assumption that the face labelled by $C'$ is not folded). Consequently Lemma 3.7 of \cite{MS} implies that any choice of $C''\subset C'$ for which the span of $\{-\nabla L(c)(p)\ |\ c\in C''\}$ is equal to the span of $\{-\nabla L(c)(p)\ |\ c\in C'\}$ satisfies $m(C')=m(C'')$. In the model of the thin part of $\mathrm{Min}(C)$ from Section \ref{horizonsec} one sees that the paths end in the same  minima independent of choices. \\

Suppose a vector in a face of $\mathcal{D}(p)$ labelled by $C'\subset C$ can be written as a positive linear combination of $\{-\nabla L(c)(p)\ |\ c\in C_{1}\subset C'\}$ and as a positive linear combination of $\{-\nabla L(c)(p)\ |\ c\in C_{2}\subset C'\}$ where $\{-\nabla L(c)(p)\ |\ c\in C_{1}\}$ and $\{-\nabla L(c)(p)\ |\ c\in C_{2}\}$ have the same span. When the face is not folded, $p$ is not a minimum of $C'$, because $-v_{C'}(p)\neq 0$ is in the tangent cone to $\mathrm{Sys}(C')$. It then follows from polytopal regularity that any point $x\in \overline{\mathrm{Min}(C)}$ at which $C_{1}$ is eutactic, $C_{1}\cup C_{2}$ is also eutactic because $\{-\nabla L(c)(x)\ |\ c\in C_{2}\}$ is in the span of $\{-\nabla L(c)(x)\ |\ c\in C_{1}\}$. Consequently, by Lemma \ref{14}, $\mathrm{Min}(C_{1})\subset \mathrm{Min}(C')$, and $\mathrm{Min}(C_{1})$ and $\mathrm{Min}(C')$ have the same dimension.\\

\begin{example}[The map $\psi$ is not always a homeomorphism on the boundary of $\mathcal{D}(p)$]
\label{nonhomeo}
When $p$ is the Bolza surface, the six curves shown in Figure \ref{genus2example1} label a stratum of dimension 1 adjacent to $p$. The gradients of these six curves are linearly independent along this stratum except at the critical point shown in Figure \ref{genus2example1}, where the dimension of the span of the gradients of the lengths drops to three. Any filling subset of this set of curves also determines a stratum adjacent to $p$ and this 1-dimensional stratum. The six curves determine the same equivalence class of strata as the four and five-dimensional strata adjacent to it. Although the flowcone is six dimensional, the ``base'' of the cone, i.e. the image of the facet of $\mathcal{D}(p)$ labelled by the six curves, is collapsed to a three dimensional cell.
\end{example}
\subsection{Proof of the theorem}
\label{proofthmonlyfaces}
Now that all the necessary concepts have been introduced, the purpose of this subsection is to prove the next theorem.

\begin{theorem}[Theorem \ref{onlyfaces} of the Introduction]
Suppose $C$ is a set of curves for which $\mathrm{Min}(C)$ has polytopal regularity and $p$ is a local maximum of $f_{\mathrm{sys}}$ with set of systoles $C$. Then the sets of minima making up the faces of $\mathrm{Min}(C)$ are in 1-1 correspondence with equivalence classes of strata adjacent to $p$ in $\mathcal{P}_{g}$.
\end{theorem}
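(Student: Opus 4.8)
The plan is to construct the correspondence explicitly via the map $\psi:\mathcal{D}(p)\to\overline{\mathrm{Min}(C)}$, with faces of $\mathrm{Min}(C)$ coming from flowcones over faces of $\mathcal{D}(p)$, and then match these flowcones to equivalence classes of adjacent strata using the material assembled in Subsection \ref{corollaries}. Since $p$ is a local maximum, $T_p\mathrm{Min}(C)=T_p\mathcal{T}_g$, so $\mathcal{D}(p)$ is a genuine polytope in $T_p\mathcal{T}_g$ whose vertices are $\{-\nabla L(c)(p)\mid c\in C\}$. The first step is to show $\psi$ is a well-defined continuous surjection onto a bordification of $\mathrm{Min}(C)$ and a homeomorphism on the interior: for interiors this uses Corollary \ref{staysin} (flowlines $\dot\gamma=-\nabla L(A,C')$ stay in $\mathrm{Sys}$ of the right subsurface and in $\mathrm{Min}(C)$), the convexity of length functions (so flowlines from $p$ do not recross), and polytopal regularity (Theorem \ref{15}) to glue local pictures. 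The choices in defining $\psi$ on folded/linearly dependent faces were already shown in Subsection \ref{backgroundforthm} not to affect the endpoints (via Lemma 3.7 of \cite{MS}), so $\psi$ is canonical enough.

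\medskip

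The second step is the face-to-stratum map. A face $f$ of $\mathcal{D}(p)$ is (after folding) the convex hull of $\{-\nabla L(c)(p)\mid c\in C'\}$ for a subset $C'\subset C$; by Remark \ref{correspondencerem}, every vector in the interior of the corresponding cell of $\mathcal{V}(p)$ lies in the tangent cone to a stratum labelled by a set of curves filling the same subsurface as $C'$, and when the gradients $\{\nabla L(c)(p)\mid c\in C'\}$ are linearly independent this stratum is exactly $\mathrm{Sys}(C')$, which is adjacent to $p$ by Corollary \ref{flowercor}. I would assign to $f$ the equivalence class of $\mathrm{Sys}(C')$ (or, in the folded case, split $f$ into its folded faces $C_1,C_2$ and assign the classes of $\mathrm{Sys}(C_1),\mathrm{Sys}(C_2)$; the discussion after the definition of folding shows $\mathrm{Min}(C_1)$ and $\mathrm{Min}(C_2)$ are then genuine facets of $\mathrm{Min}(C')$). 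The flowcone $\psi(f)$ lands in $\mathrm{Min}(C'')$ (or $\mathrm{Min}(C'')^\infty$) for $C''\subset C'$ the support of the weights, and Theorem \ref{15} together with the polytopal-regularity dimension hypothesis forces $\psi(f)$ to be exactly the face $\mathrm{Min}(C')$ of $\mathrm{Min}(C)$ of the correct dimension. Conversely, every face of $\mathrm{Min}(C)$ is $\mathrm{Min}(C')$ for some $C'\subset C$ with $p\in\overline{\mathrm{Min}(C')}$ by Lemma \ref{14}, and by Corollary \ref{flowercor} and Remark \ref{correspondencerem} this $C'$ (or its folded pieces) labels a cell of $\mathcal{V}(p)$, hence a stratum adjacent to $p$; this gives surjectivity.

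\medskip

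The third step is well-definedness and injectivity of the correspondence \emph{on equivalence classes}. Two faces $\mathrm{Min}(C_1)=\mathrm{Min}(C_2)$ of $\mathrm{Min}(C)$ must be assigned the same class; and $\mathrm{Sys}(C_1)\sim\mathrm{Sys}(C_2)$ means precisely $\mathrm{Min}(C_1)=\mathrm{Min}(C_2)$, so the map descends to a bijection between faces-as-subsets-of-minima and equivalence classes of adjacent strata. The subtle point, which must be handled by the folding discussion in Subsection \ref{backgroundforthm}, is that a single face of $\mathcal{D}(p)$ labelled by a linearly dependent $C'$ may correspond to \emph{several} strata ($\mathrm{Sys}(C_1)$, $\mathrm{Sys}(C_2)$) with the same tangent cone at $p$; one checks these fall into distinct equivalence classes exactly when the face is folded (so $\mathrm{Min}(C_1)\neq\mathrm{Min}(C_2)$ are distinct facets), and into the same class otherwise. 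One must also verify that non-filling $C'$ are not a source of spurious faces: by Lemma \ref{14} a face of $\mathrm{Min}(C)$ that is a set of minima in $\mathcal{T}_g$ is always $\mathrm{Min}(C')$ for a \emph{filling} $C'$, so non-filling faces of $\mathcal{D}(p)$ map to $\mathrm{Min}(C'')^\infty$ in the bordification and correspond to strata $\mathrm{Sys}(C'')$ adjacent to $p$ via the short multicurve $m(C')$ (Corollary \ref{defncor1}), which are still genuine strata of $\mathcal{P}_g$ incident to $p$.

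\medskip

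\textbf{Main obstacle.} The hard part is \emph{not} the combinatorial bookkeeping but controlling the geometry of $\psi$ on the boundary $\partial\mathcal{D}(p)$ when gradients degenerate: showing that $\psi$ restricted to a face $f$ has image of dimension exactly $\dim\mathrm{Span}\{\nabla L(c)(p)\mid c\in C'\}$ (possibly less than $\dim f$, as in Example \ref{nonhomeo}), that distinct faces with distinct labels-as-minima have distinct images, and that the flowcones do not interpenetrate. This requires invoking polytopal regularity (Theorem \ref{15}) crucially — it is exactly the hypothesis that rules out the pathologies — together with Proposition \ref{29} to identify which sub-faces fill which subsurfaces, and a careful limiting argument (as $\epsilon\to 0$ in Remark \ref{correspondencerem}) that some cells of $\mathcal{V}(p)$ may collapse but the collapsing is compatible with the identification of equivalence classes. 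I would isolate this as a lemma: under polytopal regularity, $\psi$ is a homeomorphism from $\mathcal{D}(p)$ modulo the equivalence ``$v\sim w$ iff $v,w$ lie in a common face $f$ and $-\nabla L(A,C')(p)$ represents both'' onto $\overline{\mathrm{Min}(C)}$, after which the theorem is a formal consequence.
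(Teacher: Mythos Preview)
Your overall framework matches the paper's: both use $\psi$ and the flowcone picture to link faces of $\mathcal{D}(p)$ to faces of $\mathrm{Min}(C)$ and to strata adjacent to $p$. However, there is a genuine gap at the heart of your second step, and your ``Main obstacle'' paragraph misidentifies where the difficulty lies.

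You assert that ``Theorem \ref{15} together with the polytopal-regularity dimension hypothesis forces $\psi(f)$ to be exactly the face $\mathrm{Min}(C')$ of $\mathrm{Min}(C)$ of the correct dimension.'' This is precisely the content that requires proof, and polytopal regularity alone does not supply it. The issue is not degeneracy of gradients on $\partial\mathcal{D}(p)$, nor whether $\psi$ collapses dimensions as in Example \ref{nonhomeo}. The issue is \emph{global persistence}: the facet structure of $\mathcal{D}(p)$ is defined by the configuration $\{-\nabla L(c)(p)\mid c\in C\}$ at the single point $p$, and you must show this configuration does not change qualitatively as you move through $\mathrm{Min}(C)$. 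Concretely, for $C'$ labelling a facet and $c''\in C\setminus C'$, you need that $-\nabla L(c'')(x)$ stays outside the convex hull of $\{-\nabla L(c)(x)\mid c\in C'\}$ for \emph{every} $x\in\mathrm{Min}(C)$, not just at $p$. Without this, $\mathrm{Min}(C')$ could lie in the interior of $\mathrm{Min}(C)$ with a petal passing through it (Figure \ref{petalpassingthrough}), and then it is not a face at all. Corollary \ref{staysin}, which you invoke, only handles a single curve and does not give this. Polytopal regularity constrains how dimensions of spans can change, but it does not on its own prevent $-\nabla L(c'')$ from crossing a facet of the convex hull of $\{-\nabla L(c)\mid c\in C'\}$ at a generalised minimum of $C(f)\cup\{c''\}$.

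The paper's proof devotes most of its length to exactly this point. It sets up a ``wedge'' bounded by the submanifolds of generalised minima of $C(f)\cup\{c''\}$ and of $C'$, runs a path $\beta$ with $\dot\beta=-v_{C(f)}$ from $p$ to the unique generalised minimum $q$ of $C(f)$ on the relevant component of $E(C(f))$, analyses a ``cone of decrease'' in $T_qE(C(f))$, and uses monotonicity of $L(c')-L(c_f)$ and $L(c'')-L(c_f)$ along $\beta$ to derive, in each case, a contradiction to $C'$ having been a facet of $\mathcal{D}(p)$. This argument (and its symmetry between $C'$ and $C(f)\cup\{c''\}$) is the substance of the theorem; your proposal contains no counterpart to it. Once this Claim is established, the paper's remaining steps---ruling out the possibilities that $\mathrm{Min}(C')$ sits inside a larger face or has full dimension, and the converse via the flowcones filling a $(6g-6)$-cell---are short, and your outline for those parts is adequate.
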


Let $p\in \mathrm{Min}(C)$ be a local maximum of $f_{\mathrm{sys}}$. Filling subsets of $C$ that are not the labels of equivalence classes of strata adjacent to $p$ give sets of minima contained in the interior of $\mathrm{Min}(C)$. In most cases this is intuitively clear, as such sets of minima usually have petals passing through them. This is illustrated in Figure \ref{petalpassingthrough}.  \\

\begin{figure}
\centering
\includegraphics[width=6cm]{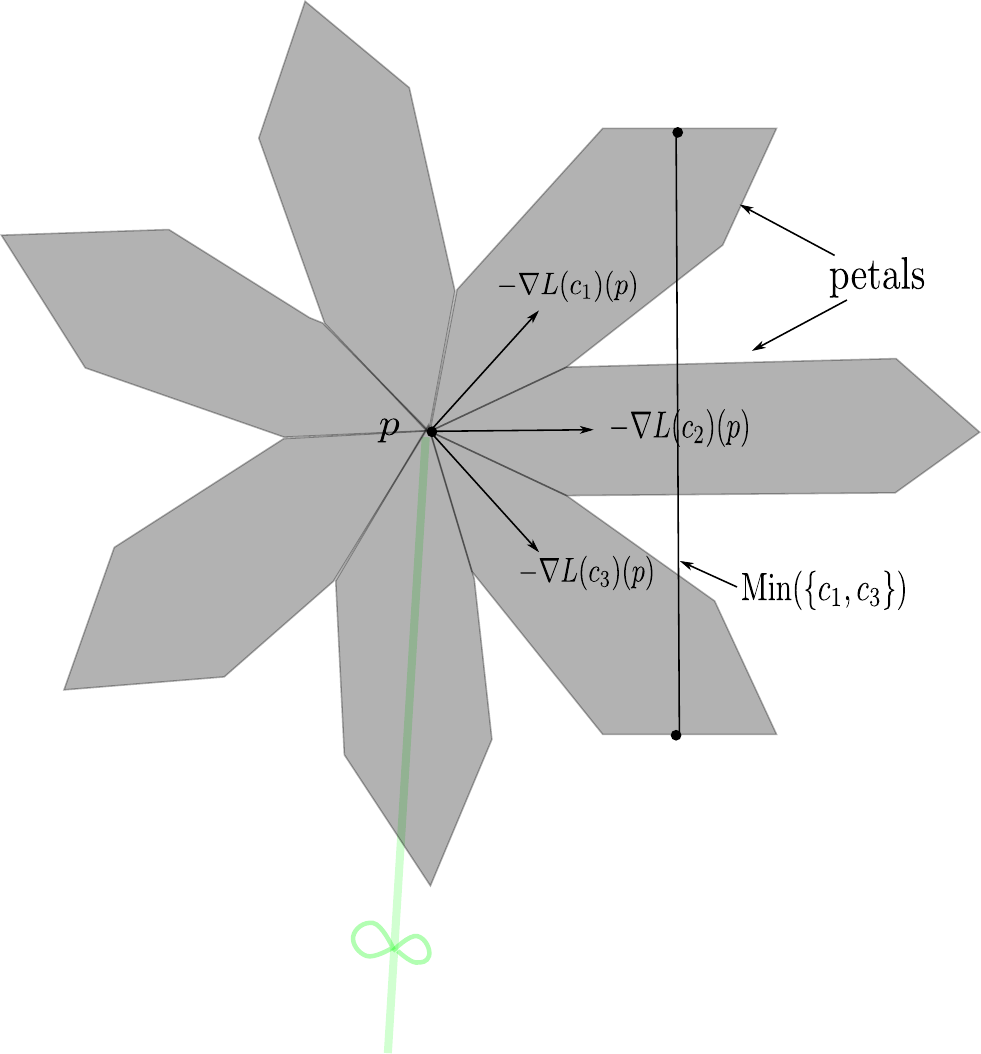}
\caption{Suppose $C$ contains $\{c_{1}, c_{2}, c_{3}\}$ whose negative gradients are shown. This figure shows a (lower dimensional representation of) a set of minima $\mathrm{Min}(\{c_{1}, c_{2}\})$ that does not determine a face of $\mathrm{Min}(C)$. It has the petal labelled by $c_{2}$ passing through it. }
\label{petalpassingthrough}
\end{figure}

\begin{proof}
Throughout this proof it will be assumed that faces of $\mathcal{D}(p)$ are not folded. This can be done without loss of generality, because if a face is folded, identical arguments apply with the faces replaced by the folded faces.\\

In examples such as Example \ref{nonhomeo}, adjacent strata in the same equivalence class will be treated as a single stratum with dimension given by that of the largest dimensional representative of the equivalence class. In Example \ref{nonhomeo}, this is a stratum of dimension three.\\

Choose a facet of $\mathcal{D}(p)$ with vertices $\{\nabla L(c)(p)\ |\ c\in C'\}$ and construct its flowcone. Assume that the stratum labelled by $C'$ is not in the same equivalence class as any higher dimensional strata adjacent to it at $p$. A stratum with this property must exist, because by Proposition 17 of \cite{SchmutzMorse} the union of the flowcones corresponding to the folded faces of $\mathcal{D}(p)$ is not all of $\mathcal{T}_{g}$, and hence the union has a codimension 1 boundary. These assumptions imply that $\mathrm{Min}(C')$ has codimension 1 in $\mathcal{T}_{g}$ and that by Lemma \ref{differentminima} and polytopal regularity, the set of points in $\mathrm{Min}(C)$ that are generalised minima of $C'$ is an embedded codimension 1 submanifold.\\

The first step is to prove the following claim.\\

Claim: the nonnegative linear combinations of vectors corresponding to the boundary of the facet of $\mathcal{D}(p)$ labelled by $C'$ are tangent to a set of paths emanating from $p$ in the flowcone labelled by $C'$. This set of paths are separating in $\mathrm{Min}(C)$. The sets of minima $\mathrm{Min}(C_{i})$ for $C_{i}\subset C'$ are all contained in or to the same side of these paths, whereas sets of minima $\mathrm{Min}(C_{j})$ for $C_{j}\subset C$ but $C_{j}\not\subset C'$ are either on the other side of the union of paths or intersect both sides.\\

Suppose $c''\in C\setminus C'$. By assumption, $-\nabla L(c'')(p)$ is not in the convex hull of $\{\nabla L(c)(p)\ |\ c\in C'\}$. If it can be shown that this holds for every $c''\in C\setminus C'$ at every point of $\mathrm{Min}(C)$, then $\mathrm{Min}(\{c''\})^{\infty}$ is on the opposite side of the paths to $\mathrm{Min}(C')$. The claim then follows because by Corollary \ref{defncor1}, $\mathrm{Min}(\{c''\})^{\infty}$ is adherent to $\mathrm{Min}(C_{j})$ whenever $C_{j}$ contains $\{c''\}$. \\

Polytopal regularity prevents $-\nabla L(c'')(x)$ from being in $\{-\nabla L(c)(x)\ |\ c\in C'\}$ at $x\in \mathrm{Min}(C)$ with one exception. For $-\nabla L(c'')$ to enter the convex hull of $\{-\nabla L(c)\ |\ c\in C'\}$ from outside, it must pass though the closure of a facet of the convex hull of $\{\nabla L(c)\ |\ c\in C'\}$. Denote (one such) facet by $f$, and the set of curves labelling it by $C(f)$. When $-\nabla L(c'')$ passes through the convex hull of $\{-\nabla L(c)\ |\ c\in C(f)\}$, the dimension of the span of $\{-\nabla L(c)\ |\ c\in C(f)\cup\{c''\}\}$ drops. This can only happen at a generalised minimum of $C(f)\cup\{c''\}$. This proves the claim in the case that the generalised minima of $C(f)\cup\{c''\}$ are not a separating set in $\mathrm{Min}(C)$, with $p$ on one side and points of $\mathrm{Min}(C')$ on the other side.\\

Figure \ref{thepolytope} illustrates a configuration that might be used to try to find a contradiction to the claim. It will be shown that if $-\nabla L(c'')$ is able to enter the convex hull of $\{-\nabla L(c)\ |\ c\in C'\}$, $C'$ could not have been the label of a facet of $\mathcal{D}(p)$. \\

In Figure \ref{thepolytope}, note the symmetry between $C(f)\cup \{c''\}$ and $C'$. If the claim breaks down for $C'$, it must also break down with $C(f)\cup \{c''\}$ in place of $C'$. This is because $p$ is a minimum of $C'\cup\{c''\}$, and $\{\nabla L(c)(p)\ |\ c\in C'\cup\{c''\}\}$ span $T_{p}\mathcal{T}_{g}$. Consequently, polytopal regularity implies $\mathrm{Min}(C'\cup\{c''\})$ has dimension $6g-6$. Also, if the claim breaks down, $\mathrm{Min}(C(f))$ is in the interior of $\mathrm{Min}(C)$, and hence by Lemma \ref{14} so are the faces $\mathrm{Min}(C')$ and $\mathrm{Min}(C(f)\cup\{c''\})$ of $\mathrm{Min}(C'\cup\{c''\})$ adjacent to $\mathrm{Min}(C(f))$ in $\mathrm{Min}(C'\cup\{c''\})$. If the claim breaks down and $f$ is the facet through which $-L(c'')$ passes, then $\mathrm{Min}(C(f))$ is in the interior of $\mathrm{Min}(C)$. This will therefore be assumed without loss of generality.\\

The symmetry between $C(f)\cup \{c''\}$ and $C'$ from Figure \ref{thepolytope} implies that if the claim breaks down, the point $p$ is contained in a region, called a ``wedge'', of $\mathrm{Min}(C)$. On one side this wedge is bounded by generalised minima of $C(f)\cup \{c''\}$ and on another side it is bounded by generalised minima of $C'$. The wedge diametrically opposite the one containing $p$ contains $\mathrm{Min}(C'\cup\{c''\})$.\\

\begin{figure}
\centering
\includegraphics[width=8cm]{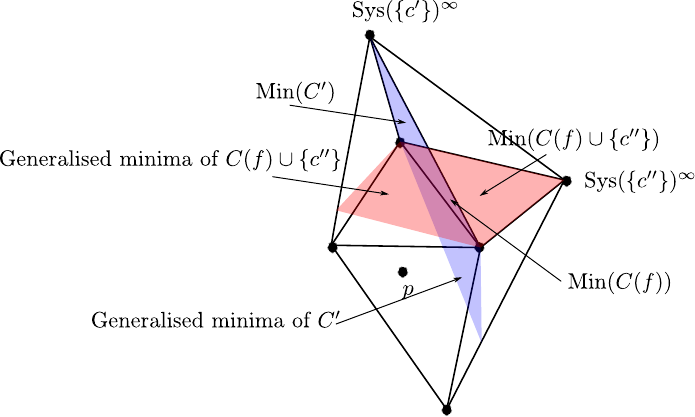}
\caption{An attempt at finding a contradiction to the claim. The generalised minima of $C'$ are shaded in blue, and the generalised minima of $C(f)\cup\{c''\}$ in red. The generalised minima of $C(f)$ are in the intersection of these. }
\label{thepolytope}
\end{figure}

It can and will be assumed without loss of generality that $\{-\nabla L(c)(p)\ |\ c\in C(f)\}$ is linearly independent, because otherwise $C(f)$ could be replaced by a subset whose gradients are linearly independent with the same span, and such that $-\nabla L(c'')$ passes through the convex hull of the gradients of this subset. Similarly it will also be assumed without loss of generality that $C'\setminus C(f)$ consists of a single curve $c'$ for which $\nabla L(c')(p)$ is linearly independent from $\{\nabla L(c)(p)\ |\ c\in C(f)\}$. If $C'\setminus C(f)$ consists of more than one curve, these do not affect the argument and merely complicate the notation. \\

Recall that by assumption $\mathrm{Sys}(C')$ is 1-dimensional, $\mathrm{Min}(C')$ has codimension 1 and $C(f)$ is a set of curves labelling a facet of the convex hull of $\{\nabla L(c)(p)\ |\ c\in C'\}$. It follows that $\mathrm{Min}(C(f))$ has codimension 2 in $\mathcal{T}_{g}$, and the dimension of the span of $\{\nabla L(c)\ |\ c\in C(f)\}$ only drops by one at the minima of $C(f)$. As in the proof of Lemma \ref{differentminima}, multiple applications of the pre-image theorem implies that every connected component of $E(C(f))\cap \mathrm{Min}(C)$ is a 2-dimensional embedded submanifold. Moreover, since $\mathrm{Min}(C(f))$ is in the interior of $\mathrm{Min}(C)$, this also gives that each connected component of $E(C(f))\cap\mathrm{Min}(C)$ has a unique generalised minimum $q$ of $C(f)$ for which $L(c_{f})|_{E(C(f))}$ is strictly convex for some (and hence every) curve $c_{f}\in C(f)$. This is shown in Figure \ref{rateofdecrease}.\\

\begin{figure}
\centering
\includegraphics[width=6cm]{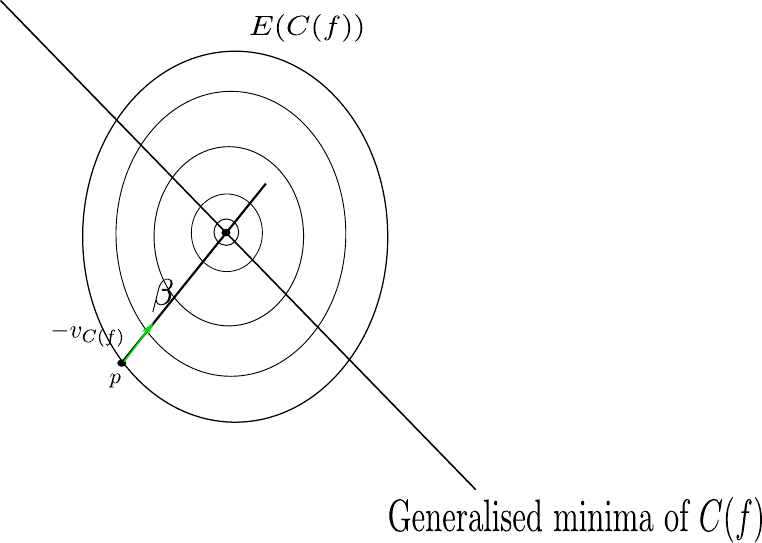}
\caption{The locus $E(C(f))$. The circles are level sets of $L(c_{f})|_{E(C(f))}$. The generalised minima of $C(f)$ intersect $E(C(f))$ only in the point $q$, and this intersection is transverse.}
\label{rateofdecrease}
\end{figure}


Within $\mathrm{Min}(C)$, the loci $E(C(f)\cup\{c'\}, d)$ are codimension 1 embedded submanifolds of $E(C(f))$ that foliate $E(C(f))$ for varying $d$. Similarly for $E(C(f)\cup\{c''\}, d)\subset E(C(f))$. There is a smooth path $\beta$ in $\mathrm{Min}(C)$ with $\beta(0)=p$ and $\dot{\beta}=-v_{C(f)}$ passing through the unique minimum $q$ of $C(f)$ on the connected component of $E(C(f))$. The segment of $\beta$ between $p$ and $q$ is contained in the same wedge as $p$ because (assuming the metric $\mathbf{g}$) the vector field $-v_{C(f)}$ is contained in the tangent space to the manifolds of generalised minima of $C(f)\cup\{c'\}$ and of $C(f)\cup\{c''\}$ on the boundary of the wedge.\\

Still assuming the metric $\mathbf{g}$, $-\nabla L(c')(q)$ is in the tangent space to the submanifold of generalised minima of $C(f)\cup\{c'\}$, with a nonvanishing component into the boundary of the wedge diametrically opposite the one containing $p$. Similarly for $-\nabla L(c'')(q)$. The same holds in the intersection with $E(C(f))$, because $E(C(f))$ intersects the submanifold of generalised minima of $C(f)$ at right angles with respect to the metric $\mathbf{g}$. The projection of $-\nabla L(c')(q)$ to $T_{q}E(C(f))$ is a direction in which $L(c')$ is decreasing and the lengths of curves in $C(f)$ are stationary. Similarly for $-\nabla L(c'')(q)$. It follows that there is a cone of directions in $T_{q}E(C(f))$ in which both $L(c')$ and $L(c'')$ are decreasing. This cone is shown in Figure \ref{orangeconea}, and will be called the \textit{cone of decrease}.\\

\begin{figure}
\centering
\includegraphics[width=7cm]{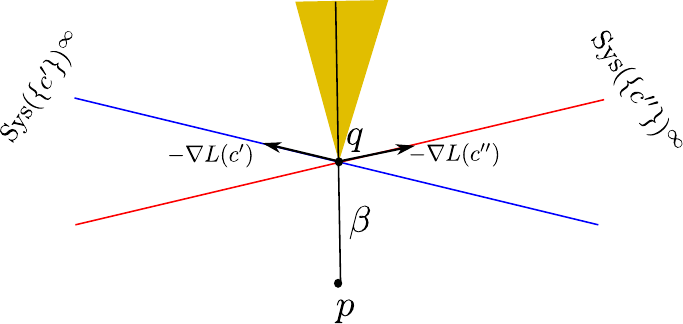}
\caption{The cone of decrease is shown in orange. In this figure, the tangent vector to $\beta$ at $q$ is in the cone of decrease. The red and blue lines are the sets of generalised minima that define the wedges.}
\label{orangeconea}
\end{figure}

Consider first the case shown in Figure \ref{orangeconea} in which the tangent to $\beta$ at $q$ is in the interior of the cone of decrease. Choose a $c_{f}\subset C_{f}$; it does not matter which. The function $d_{1}:=L(c')-L(c_{f})$ is strictly monotone on $\beta$ between $p$ and $q$, because otherwise there would be an $E(C(f)\cup\{c'\}, d^{*})$ intersecting $\beta$ on which $d_{1}|_{E(C(f))}$ is stationary or $v_{C(f)\cup\{c'\}}=v_{C(f)}$. By Remark \ref{balancedeverywhere}, the second option could only happen when $E(C(f)\cup\{c'\}, d^{*})$ is semi-balanced and contains $\beta$, which could only occur when the tangent to $\beta$ at $q$ is on the boundary of the cone of decrease. The first possibility contradicts polytopal regularity, because by polytopal regularity, linear independence of $\nabla L(c')$ from $\{\nabla L(c)\ |\ c\in C(f)\}$ holds everywhere on the 1-dimensional $E(C(f)\cup\{c'\}, d^{*})\cap \mathrm{Min}(C)$ except the point at which $E(C(f)\cup\{c'\}, d^{*})$ intersects the manifold of generalised minima of $C(f)\cup\{c'\}$. This shows $L(c')$ is decreasing faster along $\beta$ than $L(c_{f})$ at every point of the connected component of $\beta\setminus \{q\}$, and hence also at $p$.\\

Since $L(c')$ is increasing faster than $L(c_{f})$ in the direction of $v_{C(f)}(p)$, it follows that $v_{C(f)\cup\{c'\}}(p)$ is on the opposite side of the subspace of $T_{p}\mathcal{T}_{g}$ spanned by $\{\nabla L(c)(p)\ |\ c\in C(f)\}$ as $\nabla L(c')(p)$. By a symmetric argument, the same holds for $v_{C(f)\cup\{c''\}}(p)$ and the subspace of $T_{p}\mathcal{T}_{g}$ spanned by $\{\nabla L(c)(p)\ |\ c\in C(f)\}$. However, since $\nabla L(c')(p)$ and $\nabla L(c'')(p)$ are both on opposite sides of the subspace of $T_{p}\mathcal{T}_{g}$ spanned by $\{\nabla L(c)(p)\ |\ c\in C(f)\}$, a contradiction is obtained to the assumption that $C'$ is the label of a facet of $\mathcal{D}(p)$. This concludes the proof of the claim when the tangent to $\beta$ at $q$ is in the cone of decrease.\\

\begin{figure}
\centering
\includegraphics[width=5cm]{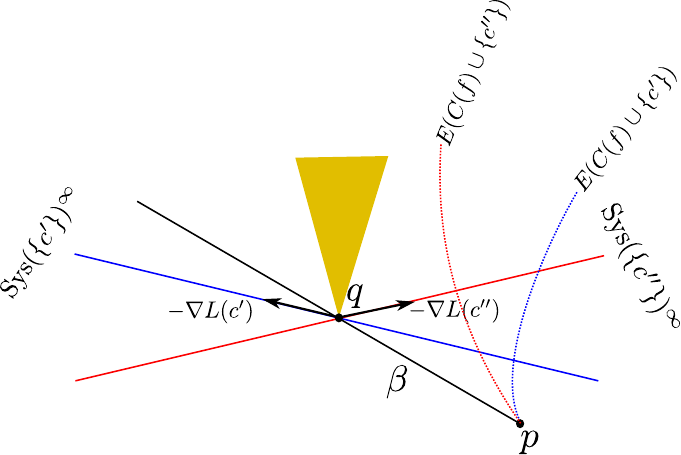}
\caption{}
\label{orangeconeb}
\end{figure}

A similar contradiction is obtained when the tangent to $\beta$ at $q$ is in the boundary of the cone of decrease. In this case, one of $E(C(f)\cup\{c'\})$ and $E(C(f)\cup\{c''\})$ is semi-balanced, and the other is balanced.\\

If the tangent to $\beta$ at $q$ is not in the cone of decrease, it gives a direction in which one of $L(c')$ or $L(c'')$ is increasing at $q$ and the other is decreasing at $q$. Suppose $L(c'')$ is increasing. As in the previous case, the function $d_{2}:=L(c'')-L(c_{f})$ is strictly monotone on $\beta$ between $p$ and $q$. Since $d_{2}$ is increasing at $q$ it is increasing at $p$. Similarly, $d_{1}$ is monotone decreasing along $\beta$ between $p$ and $q$. This implies the tangent vector $-v_{C(f)\cup\{c'\}}(p)$ to $E(C(f)\cup\{c'\})$ and the tangent vector $-v_{C(f)\cup\{c''\}}(p)$ to $E(C(f)\cup\{c''\})$ are both to the same side of the complement in $T_{p}E(C(f))$ of the subspace spanned by $v_{C(f)}$, as shown in Figure \ref{orangeconeb}.\\

Since $\mathrm{Sys}(C')$ is a 1-dimensional stratum, up to rescaling, $-v_{C(f)\cup\{c'\}}(p)$ is unique, and must be in the tangent cone to $\mathrm{Sys}(C')$ at $p$. However, $-v_{C(f)\cup\{c'\}}(p)$ is a direction in which the length of $L(c'')$ is decreasing faster than the lengths of curves in $C'$. This contradiction completes the proof of the claim.\\

It follows from the claim that $\mathrm{Min}(C')$ is separating in $\mathrm{Min}(C)$. It also follows from the claim that $\mathrm{Min}(C')$ cannot have larger dimensional sets of minima on the side away from $p$ because any such larger dimensional sets of minima would have to be labelled by subsets of $C$ strictly containing $C'$, which the claim showed is impossible.\\

Either
\begin{enumerate}
\item{$\mathrm{Min}(C')$ is a face of $\mathrm{Min}(C)$,  }
\item{$\mathrm{Min}(C')$ is contained in a larger face of $\mathrm{Min}(C)$ of the same dimension, or}
\item{$\mathrm{Min}(C')$ has dimension $6g-6$ and is therefore not a face.}
\end{enumerate}

Point (2) is not possible, because for any curve $c\in C\setminus C'$ for which $-\nabla L(c)(p)$ is not a vertex on the boundary of the facet of $\mathcal{D}(p)$ labelled by $C'$, $p$ is a minimum of $C'\cup \{c\}$. Loci $E(C'\cup \{c\}, d)$ in $\mathrm{Min}(C)$ are 0-dimensional, but by polytopal regularity would have to be at least 1-dimensional if $\mathrm{Min}(C'\cup \{c\})$ were a face. Point (3) is similarly ruled out by the fact that loci $E(C',d)$ in $\mathrm{Min}(C)$ are 1-dimensional (same as $\mathrm{Sys}(C')$) and hence polytopal regularity does not allow $\mathrm{Min}(C')$ to have dimension $6g-6$. This concludes the proof that 1-dimensional strata adjacent to $p$, that are not in the same equivalence class as higher dimensional strata, determine facets of $\mathrm{Min}(C)$. \\

The proof that higher dimensional strata adjacent to $p$ determine faces of $\mathrm{Min}(C)$ is similar. The argument is simpler, as the manifolds of generalised minima do not have high enough dimension to be separating in $\mathrm{Min}(C)$, so it is not necessary to consider the phenomenon illustrated in Figure \ref{thepolytope}.  Note that by the duality between $\mathcal{D}(p)$ and $\mathcal{V}(p)$, the folded faces of the facet of $\mathcal{D}(p)$ labelled by $C'$ correspond to strata adjacent to $\mathrm{Sys}(C')$ at $p$. This completes the proof that every equivalence classe of strata adjacent to $p$ in $\mathcal{P}_{g}$ determines a face of $\mathrm{Min}(C)$.\\


The converse follows from observing that the union of the flowcones corresponding to the facets of $\mathcal{D}(p)$ give a cell of dimension $6g-6$. The base of a flowcone labelled by $C''\subset C$ is either $\mathrm{Min}(C'')$ if $C''$ fills, or $\mathrm{Min}(C'')^{\infty}$ if $C''$ does not fill. There is no space to add in any extra faces that do not come from equivalence classes of strata. 
\end{proof}

\begin{remark}
\label{adjacencyinfo}
The claim in the proof of Theorem \ref{onlyfaces} ensures that the adjacency information between the petals, encoded in $\mathcal{D}(p)$, is preserved throughout $\mathrm{Min}(C)$ and agrees with the combinatorial information given by the horizon map.
\end{remark}

\begin{remark}
\label{onlyfacesrem}
For a critical point $p$ that is not a local maximum, Theorem \ref{onlyfaces} also applies to the strata adjacent to $p$ and intersecting $\mathrm{Min}(C)$, with identical proof.
\end{remark}



\section{Defining duality}
\label{dualitysec}
The purpose of this section is to explain what a dual to $\mathcal{P}_{g}$ should be, and to outline how to construct duals from sets of minima. As $\mathcal{P}_{g}$ is not an embedded submanifold, the definitions used here are not quite the standard definitions. In the process it will be made rigorous the sense in which Schmutz Schaller's sets of minima are dual to Thurston's $\mathcal{P}_{g}$. Theorem \ref{onlyfaces} is part of this story, but it remains to understand how the sets of minima intersect $\mathcal{P}_{g}$ and how duals are constructed from sets of minima. This makes it possible to label duals by sets of curves; an observation that will be exploited in a later paper to relate the homological span of the sets of curves to collapsibility properties of $\mathcal{P}_{g}$.

\subsection{Intersections}
\label{facesandintersections}
It was shown in Section 5 of \cite{MS} that near a critical point $p\in \mathrm{Sys}(C)$, the set of minima $\mathrm{Min}(C)$ behaves like the stable manifold of $p$. As a result of Corollary \ref{defncor1}, the same is true in the thin part of $\mathrm{Min}(C)$. This subsection shows that certain sets of minima have the right intersection properties with strata of $\mathcal{P}_{g}$ to define duals. \\

The model of $\mathcal{P}_{g}$ around a critical point $p$ from \cite{MS} shows that when $T_{p}\mathrm{Min}(C)$ does not have any vectors in the tangent cone to $\mathcal{P}_{g}$, $\mathcal{P}_{g}$ has a tangent space at $p$, and this is given by $\{\nabla L(c)(p)\ |\ c\in C\}^{\perp}$. In this case, the dimension of $\mathrm{Min}(C)$ at $p$ is equal to the codimension of $\mathcal{P}_{g}$ at $p$.

\begin{theorem}[Theorem \ref{neededforduality} of the Introduction]
Suppose $p$ is a critical point with set of systoles $C$ and for which $T_{p}\mathrm{Min}(C)$ does not have any vectors in the tangent cone to $\mathcal{P}_{g}$. Then $\mathrm{Min}(C)$ is a cell with empty boundary. In addition, there is a homotopy of $\mathrm{Min}(C)$ to a cell obtained as the pre-image of $p$ under Thurston's equivariant deformation retraction of $\mathcal{T}_{g}$ onto $\mathcal{P}_{g}$. This homotopy fixes $p$ and keeps the thin part of $\mathrm{Min}(C)$ in the thin part of $\mathcal{T}_{g}$.
\end{theorem}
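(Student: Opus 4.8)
The plan is to proceed in three stages: extract the local picture at $p$ from the hypothesis, rule out a finite boundary for $\mathrm{Min}(C)$, and then homotope $\mathrm{Min}(C)$ onto the fibre $\Pi:=r^{-1}(p)$ of Thurston's retraction $r$. For the local picture, recall from Theorem \ref{Akroutthm} that $p\in\mathrm{Min}(C)\cap\mathrm{Sys}(C)$ and from Corollary 12 of \cite{SchmutzMorse} that $T_p\mathrm{Min}(C)=\mathrm{Span}\{\nabla L(c)(p)\ |\ c\in C\}$. As observed after the statement, the hypothesis together with Proposition 1.3 of \cite{MS} forces the tangent cone to $\mathcal{P}_g$ at $p$ to equal the codimension-$k$ subspace $\{\nabla L(c)(p)\ |\ c\in C\}^{\perp}$, where $k=\dim\mathrm{Min}(C)$: by Theorem \ref{MSthm} this subspace is already contained in that cone and has dimension $6g-6-k$, while by the hypothesis no further direction of the cone can meet $T_p\mathrm{Min}(C)$, hence every such direction lies in the orthogonal complement. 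So near $p$, $\mathcal{P}_g$ is a $\mathcal{C}^1$ submanifold of dimension complementary to $\mathrm{Min}(C)$, transverse to $\mathrm{Min}(C)$ and, by the uniqueness clause of Theorem \ref{Akroutthm}, meeting it only at $p$; and $\mathrm{Min}(C)$ is a $\mathcal{C}^1$ cell near $p$ by Lemma \ref{regularitylem}.

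For the empty boundary assertion I would argue by contradiction. If $\mathrm{Min}(C)$ had a boundary point in $\mathcal{T}_g$ then, by Lemma \ref{14}, some $\mathrm{Min}(C')$ with $C'\subsetneq C$ filling would be a face of $\mathrm{Min}(C)$; in the petal and flowcone description of Section \ref{twocases} (applicable here by Remark \ref{onlyfacesrem}) this face is the base of a flowcone whose labelling facet of $\mathcal{D}(p)$ fills, equivalently $0$ fails to lie in the relative interior of $\mathrm{conv}\{-\nabla L(c)(p)\ |\ c\in C'\}$. Since $C'\subsetneq C$ we have $p\in\mathrm{Sys}(C)\subset\overline{\mathrm{Sys}(C')}$ while $p\notin\mathrm{Sys}(C')$, so by triangulability of $\mathrm{Sys}(C')$ (Lojasiewicz) the stratum accumulates at $p$ and has a nonzero tangent cone there, contained in the tangent cone to $\mathcal{P}_g$ at $p$. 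The remaining point is to show that, for a \emph{filling} facet label $C'$, the facet geometry of $\mathcal{D}(p)$ together with the balanced/semi-balanced/unbalanced trichotomy of Section \ref{defns} forces a representative of that tangent cone to lie in $T_p\mathrm{Min}(C)=\mathrm{Span}\{\nabla L(c)(p)\ |\ c\in C\}$, contradicting the hypothesis. Hence every facet of $\mathcal{D}(p)$ is labelled by a nonfilling set, and by Corollary \ref{defncor1} every face of $\mathrm{Min}(C)$ occurs at infinity as a locus $\mathrm{Min}(C')^{\infty}$ with $C'$ nonfilling. Combined with the radial structure of $\mathrm{Min}(C)$ about $p$ and Lemma \ref{regularitylem}, this shows $\mathrm{Min}(C)$ is a $k$-cell whose only boundary points are noded surfaces, which is exactly the first assertion.

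For the homotopy, note that $\Pi=r^{-1}(p)$ meets $\mathcal{P}_g$ only in $p$, is a valid stable manifold of $p$ for the $f_{\mathrm{sys}}$-increasing flow defining $r$, and is therefore a $k$-cell (dimension $6g-6-\dim_p\mathcal{P}_g=k$ by the hypothesis) with $p$ its unique $f_{\mathrm{sys}}$-maximum and $T_p\Pi=\mathrm{Span}\{\nabla L(c)(p)\ |\ c\in C\}=T_p\mathrm{Min}(C)$, since reversing the flow from $p$ decreases $L(c)$ for each $c\in C$. Both cells limit at infinity onto the same noded loci $\mathrm{Min}(C')^{\infty}$, $C'\subsetneq C$ nonfilling, because reversing Thurston's flow shortens precisely the systoles at $p$, i.e. the curves of $C$; consequently $\mathrm{Min}(C)$ and $\Pi$ carry the same combinatorial data, recorded by the horizon map into $\mathcal{C}^{\circ}_g$ of Section \ref{horizonsec}. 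To build the homotopy I would cut both cells along $f_{\mathrm{sys}}^{-1}(\delta)$ for $\delta<\epsilon_M$: the compact thick pieces $\mathrm{Min}(C)\cap\mathcal{T}_g^{\delta}$ and $\Pi\cap\mathcal{T}_g^{\delta}$ are $k$-cells with the same boundary pattern on $\partial\mathcal{T}_g^{\delta}$ by the correspondence of faces with strata adjacent to $p$ (Theorem \ref{onlyfaces} and Remark \ref{onlyfacesrem}), hence are homotopic rel that boundary and rel $p$; the thin pieces are both collars over the same subcomplex of $\partial\mathcal{T}_g^{\delta}\simeq\mathcal{C}^{\circ}_g$ (Theorem \ref{complexembedding}) running out to the noded loci, hence homotopic within the thin part via the collar product structure. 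Splicing the two homotopies along $f_{\mathrm{sys}}^{-1}(\delta)$ yields a homotopy that fixes $p$ and keeps the thin part of $\mathrm{Min}(C)$ in the thin part of $\mathcal{T}_g$.

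The main obstacle is the empty boundary step, specifically the claim that a facet of $\mathcal{D}(p)$ labelled by a filling $C'\subsetneq C$ produces a vector of $T_p\mathrm{Min}(C)$ lying in the tangent cone to $\mathcal{P}_g$ at $p$; this has to be teased out of the flowcone geometry and of the way the balanced/unbalanced trichotomy controls which flowcones remain in $\mathcal{T}_g$ and reach into $\mathcal{P}_g$, in the spirit of Proposition \ref{29} and the proof of Theorem \ref{onlyfaces}. A secondary difficulty is arranging the homotopy in the thin region to stay inside $\mathcal{T}_g$ while respecting the thin/thick splitting, which is precisely what the curve-complex model of the thin part from Section \ref{horizonsec} is designed to handle.
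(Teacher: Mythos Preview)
Your empty-boundary step is essentially the paper's argument, but you have inverted the difficulty. What you flag as the ``main obstacle''---producing a vector of $T_p\mathrm{Min}(C)$ in the tangent cone to $\mathcal{P}_g$ from a filling facet label $C'$---is immediate: any $-v_{C'}(p)$ is a convex combination of $\{-\nabla L(c)(p)\ |\ c\in C'\}$, hence lies in $\mathrm{Span}\{\nabla L(c)(p)\ |\ c\in C\}=T_p\mathrm{Min}(C)$ automatically, and it sits in the tangent cone to $\mathrm{Sys}(C')\subset\mathcal{P}_g$ at $p$. That is the whole argument (with the folded-facet case handled by the proof of Theorem~4.5 of \cite{MS}). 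You should also not appeal to Lemma~\ref{regularitylem} or Theorem~\ref{onlyfaces} here: both require regularity hypotheses that are not assumed in this theorem. The paper instead deduces that $\mathrm{Min}(C)$ is a $\mathcal{C}^1$ cell from the empty boundary via Theorem~10 of \cite{SchmutzMorse}.

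The genuine gap is in your homotopy. You assert that $\Pi=r^{-1}(p)$ limits onto the same noded loci as $\mathrm{Min}(C)$ ``because reversing Thurston's flow shortens precisely the systoles at $p$, i.e.\ the curves of $C$''. This is not justified and is the crux of the proof. Thurston's flow is glued from local vector fields $X_{C_i}$ supported on sets $\mathcal{U}_i$ determined by the \emph{locally} almost-shortest curves; once a reversed flowline $\tilde\alpha$ leaves a small neighbourhood of $p$, curves outside $C$ can enter the almost-shortest set and redirect $\tilde\alpha$ toward entirely different boundary strata. The paper devotes most of its proof to ruling this out: using Proposition~\ref{29} together with Wolpert's twist formula, it shows that along $\tilde\alpha$ no curve lying outside the subsurface filled by $C'$ can decrease fast enough to enter an $\mathcal{U}_i$. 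Only with that control does one know that $\alpha$ and $\tilde\alpha$ end in the same or adjacent strata, which is what allows the homotopy to keep the thin part in the thin part. Your cut-and-splice along $f_{\mathrm{sys}}^{-1}(\delta)$ presupposes exactly this matching of boundary patterns, so without an argument of this type the construction does not go through.
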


\begin{remark}
Under the assumptions of the theorem, a cell obtained as the pre-image of $p$ under Thurston's deformation retraction intersects $\mathcal{P}_{g}$ in the single point $p$. As explained below, a stronger statement appears to be true; one could choose a vector field in  Thurston's construction similar to the choice made by Thurston such that $\mathrm{Min}(C)$ is the pre-image of $p$ under the resulting equivariant deformation retraction.
\end{remark}
\begin{proof}
Suppose $C'$ is the label of a facet of $\mathcal{D}(p)$. Let $\alpha(t)$ be a path in $\mathrm{Min}(C)$ with $\alpha(0)=p$ and $\dot{\alpha}(t)$ a positive linear combination of $\{-\nabla L(c)\ |\ c\in C'\}$. Gradients are defined with respect to $\mathbf{g}$. Note that, unlike in Subsection \ref{twocases}, there are now no regularity assumptions being made.\\

Claim - The assumption that $T_{p}\mathrm{Min}(C)$ does not have any vectors in the tangent cone to $\mathcal{P}_{g}$ implies that $C'$ does not fill. \\

To prove the claim, suppose $C'$ is the label of a face of $\mathcal{D}(p)$ that is not folded. If $C'$ were a filling set,
this gives a stratum adjacent to $p$ with filling systoles and tangent cone at $p$ containing a vector $-v_{C'}$ in the cell of $\mathcal{V}(p)$ labelled by $C'$. This contradicts the assumption in the statement of the theorem. Similarly if the facet is folded, the argument given in the proof of Theorem 4.5 of \cite{MS} shows that the folded faces are also labelled by filling sets, each of which has a tangent cone at $p$ that contradicts the assumption in the statement of the theorem. It follows that for sufficiently small $t$, the set of systoles along $\alpha$ is a nonfilling subset $C''$ of $C'$. This concludes the proof of the claim.\\

Since $C'$ does not fill, $\alpha$ extends into the thin part of $\mathcal{T}_{g}$. Assuming the metric $\mathbf{g}$, by Lemma \ref{14}, $\alpha$ is contained in $\mathrm{Min}(C)$. Consequently, $\mathrm{Min}(C)$ has no boundary. Since $\mathrm{Min}(C)$ has no boundary, it follows from Theorem 10 of \cite{SchmutzMorse} that $\mathrm{Min}(C)$ is a continuously differentiable cell.\\

Define $\mathcal{P}_{g,\epsilon'}$ to be the set of all $x\in \mathcal{T}_{g}$ such that the set $C_{\mathrm{short}}(x)$ of curves with length at $x$ less than or equal to $f_{\mathrm{sys}}(x)+\epsilon'$ fill.\\

Let $\mathcal{N}(p)$ be a ball contained in $\mathrm{Min}(C)$ centered on $p$ and intersecting every choice of path $\alpha$ in an interval on which the systoles are contained in a nonfilling subset of $C$. This is possible by local finiteness. Choose $\epsilon>0$ needed in the definition of the vector field used in constructing Thurson's equivariant deformation retraction of $\mathcal{T}_{g}$ onto $\mathcal{P}_{g}$. Here $\mathcal{N}(p)$ is chosen sufficiently small such that the boundary of $\partial\mathcal{N}(p)$ is contained in $\mathcal{P}_{g,\epsilon'}$.\\

Define $\mathcal{B}(p)$ to be the union of $\mathcal{N}(p)$ and the pre-image of $\partial\mathcal{N}(p)$ under Thurston's deformation retraction. By construction, the cell $\mathcal{B}(p)$ has the same dimension as $\mathrm{Min}(C)$.\\

A path $\alpha$ in $\mathrm{Min}(C)$ will be made to correspond to a path $\tilde{\alpha}$ in $\mathcal{B}(p)$, with $\tilde{\alpha}(0)=\alpha(0)=p$, and $\dot{\tilde{\alpha}}(0)=\dot{\alpha}(0)$. The path $\tilde{\alpha}$ is a flowline of the $f_{\mathrm{sys}}$-increasing flow used in \cite{Thurston} to construct the deformation retraction. Some information about Thurston's construction will now be needed to argue that $\alpha$ stays close to $\tilde{\alpha}$. More details can be found in \cite{ThurstonSpineSurvey}.\\

An open cover of $\mathcal{T}_{g}$ is obtained as follows: A finite set of curves $C_{i}$ determines an open set $\mathcal{U}_{i}$ in the cover, where $\mathcal{U}_{i}$ is given by the set of points of $\mathcal{T}_{g}$ for which $C_{i}$ is the set of curves of length less than $f_{\mathrm{sys}}+|C_{i}|\epsilon$. By compactness of the $\delta$-thick part of $\mathcal{T}_{g}$ modulo the action of $\Gamma_{g}$, $\epsilon$ can and will be chosen small enough that every set $C_{i}$ corresponding to a nonempty open set $\mathcal{U}_{i}$ has the property that curves in $C_{i}$ intersect pairwise at most once.\\

A smooth vector field $X_{C_{i}}$ with support on $\mathcal{U}_{i}$ is then constructed. When $C_{i}$ does not fill, $X_{C_{i}}$ is nonvanishing on $\mathcal{U}_{i}$ and gives a direction in which the lengths of all the curves in $C_{i}$ are strictly increasing. Partitions of unity are used to glue together the vector fields with local support to obtain a smooth vector field $X$ on $\mathcal{T}_{g}$ that generates the flow. \\

Section 6 of \cite{MS} discusses possible choices for the vector fields $\{X_{C_{i}}\}$. All that will be needed here is that $X_{C_{i}}$ is in the convex hull of $\{\nabla L(c)\ |\ c\in C_{i}\}$. When $x_{1}:=\tilde{\alpha}(t_{1})$ is in the intersection of the open sets $\mathcal{U}_{1}, \ldots, \mathcal{U}_{k}$, it follows that $X(x_{1})$ is in the convex hull of $\{\nabla L(c)(x_{1})\ |\ c\in C_{1}\cup\ldots\cup C_{k}\}$. \\

If $c'$ is a curve that is not contained in the subsurface of $\mathcal{S}_{g}$ filled by $C_{i}$, for $c'$ to enter a set of shortest curves $C_{i}$ at a point $x_{2}:=\tilde{\alpha}(t_{2})$, $t_{2}>t_{1}$, for sufficiently small $\epsilon$, $L(c')$ must be decreasing along $\tilde{\alpha}$ at $x_{1}$ faster than the lengths of curves in $C_{i}$ for $x_{1}$ sufficiently close to $x_{2}$.\\

Claim: $\tilde{\alpha}$ only passes through open sets labelled by sets of curves contained in the subsurface of $\mathcal{S}_{g}$ filled by $C'$.\\

Local finiteness implies that for small $t_{2}$, $C_{1}\cup\ldots\cup C_{k}\subset C'$. The claim is proven by finding a contradiction to the statement that for $c'$ not contained in the subsurface filled by $C'$, $L(c')$ is decreasing along $\tilde{\alpha}$ at $x_{1}$ faster than the lengths of curves in $C_{1}\cup\ldots\cup C_{k}$. This proves the claim at the point $x_{2}$. The same argument can then be applied along $\alpha$ at larger and larger values of $t$ to prove the claim.\\
 
Outside of a set of measure 0, $-\nabla L(c')$ is not in or on the boundary of the convex hull of $\{\nabla L(c)\ |\ c\in C'\}$. The simplest way to see this is to note that by assumption, there exists a curve $c'*$ intersecting $c'$ and disjoint from every curve in $C'$. By Wolpert's twist formula, away from a set of measure 0, changing the twist parameter around $c'*$ gives a direction in which $L(c')$ is changing but the lengths of curves in $C'$ are not. When the loci $E(C'\cup\{c'\},d)$ have dimension greater than zero, a stronger statement follows from Lemma 3.3 of \cite{MS}.\\

It is possible to assume without loss of generality that at $x_{1}$, $-\nabla L(c')(x_{1})$ is not in or on the boundary of the convex hull of $\{\nabla L(c)(x_{1})\ |\ c\in C'\}$. Otherwise, if $c'$ were to become short enough to be contained in a set  labelling an element of the cover, this holds on an open set, so it would be possible to choose a different path in place of $\alpha$ or a different point in place of $x_{1}$ to derive the contradiction.\\

Consider the face $f$ of the convex hull $\{\nabla L(c)(x_{1})\ |\ c\in C'\}$ closest to $-\nabla L(c')(x)$, and denote by $C(f)$ the subset of $C'$ corresponding to $f$. By assumption, $-\nabla L(c')(x_{1})$ is linearly independent from $\{\nabla L(c)(x_{1})\ |\ c\in C(f)\}$.\\

For every subset $C'(f)$ of $C(f)$ for which $\{\nabla L(c)(x_{1})\ |\ c\in C'(f)\}$ are linearly independent, when $x_{1}$ is sufficiently close to $x_{2}$, for $L(c')$ to have length almost equal shortest, Proposition \ref{29} implies that in the set $\mathcal{US}(C'(f)\cup\{c\})$ of unit vectors in the span of $\{\nabla L(c)(x_{1})\ |\ c\in C'(f)\cup\{c'\}\}$, the vector $v_{C'(f)\cup\{c'\}}$ is in the same connected component of $\mathcal{US}(C'(f)\cup\{c\})\setminus \mathcal{US}(C'(f))$ as $\nabla L(c')$. Consequently, the vectors in the convex hull of $\{\-nabla L(c)(x_{1})\ |\ c\in C(f)\}$ are directions in which $L(c')$ is decreasing more slowly than the length of some curve in $C(f)$. The same therefore holds for the convex hull of $\{\nabla L(c)(x_{1})\ |\ c\in C'\}$. This contradiction completes the proof of the claim.\\

Since the systoles along $\tilde{\alpha}$ are all contained in the subsurface of $\mathcal{S}_{g}$ filled by $C'$, the endpoint of $\tilde{\alpha}$ is in a stratum $\mathrm{Sys}(m'(C'))$, where $m'(C')$ is a multicurve disjoint from $m(C')$. It follows that $\alpha$ and $\tilde{\alpha}$ have endpoints in either the same or adjacent strata. This is important, because it will mean that the homotopy does not move the points in the thin part very far.\\

The cell $\mathcal{B}(p)$ is parameterised as follows: Choose the parameterisation of the paths $\alpha$ and $\tilde{\alpha}$ such that $t$ varies over $[0,1]$, and varies continuously over nearby paths. Set $C=\{c_{1}, \ldots, c_{n}\}$, and make a choice of parameters $(a_{1}, \ldots, a_{n})$ varying continously over $\mathrm{Min}(C)$, where $x\in \mathrm{Min}(C)$ is the point at which the minimum of the length function $\sum_{i=1}^{n}a_{i}(x)L(c_{i})$ is realised. A point $\tilde{\alpha}(t)$ in $\mathcal{B}(p)$ is assigned the same value of the tuple $(a_{1}, \ldots, a_{n})$ as the point $\alpha(t)$ in $\mathrm{Min}(C)$. This determines a smooth vector field on $\mathcal{B}(p)$, with value at $x\in \mathcal{B}(p)$ given by $\nabla L(A(x), C)$. \\

A map $H'$ is obtained, that takes $(x,t)$ for $x$ in $\mathcal{B}(p)$ and $t\geq 0$ to the point $\eta(t)$, where $\eta$ is a flowline of the vector field $-\nabla L(A(x), C)$ with $\alpha(0)=x$. If necessary, the vector field on $\mathcal{B}(p)$ is scaled by a smooth function such that $H'$ takes $\mathcal{B}(p)$ to $\mathrm{Min}(C)$ in unit time. The required homotopy $H$ is obtained by smoothly stretching/compressing the image of $\mathcal{B}(p)$ under $H'_{t}$ for $t\in[0,1]$ so that the homotopy preserves the subset of the cell that maps into the thin part of $\mathcal{T}_{g}$.
\end{proof}

\subsection{Constructing duals}
\label{constructingduals}
This subsection explains how to construct duals to the Thurston spine using sets of minima; previously this had been done using pre-images of points under Thurston's deformation retraction. The advantage of this approach is that, with the help of the horizon map, it is possible to label a dual by a set of curves. One reason for being interested in duals is that they can be used to relate the topology of the boundary of the thick part of $\mathcal{T}_{g}$ -- an object that will be identified with Harvey's curve complex by Theorem \ref{complexembedding} -- with $\Gamma_{g}$-equivariant collapsibility properties of $\mathcal{P}_{g}$.\\

Recall from Theorem \ref{MSthm} that $\mathcal{P}_{g}$ equivariantly deformation retracts onto a subcomplex $\mathcal{P}_{g}^{X}$ consisting of a union of unstable manifolds of critical points of $f_{\mathrm{sys}}$. The cell complex $\mathcal{P}_{g}^{X}$ is obtained from $\mathcal{P}_{g}$ via a deformation retraction that arises from the flow of an $f_{\mathrm{sys}}$-increasing $\Gamma_{g}$-equivariant vector field $X$ on $\mathcal{P}_{g}$, analogous to the construction of $\mathcal{P}_{g}$ from $\mathcal{T}_{g}$.\\

The fact that the existence and uniqueness theorem of ODEs is only guaranteed to hold where $f_{\mathrm{sys}}$ is smooth, unstable manifolds of distinct critical points are not necessarily disjoint; flowlines can flow together, but it was shown in Section 3.2 of \cite{JustVCD} that they cannot flow apart. In \cite{JustVCD}, connected components of intersections of unstable manifolds of distinct critical points were called flaps. Flaps cannot contain critical points. By collapsing the segments of flowlines contained in flaps, $\mathcal{P}^{X}_{g}$ can be shown to be $\Gamma_{g}$-equivariantly homotopy equivalent to a cell complex without flaps. As there is no need to construct duals to flaps, the possibility of flaps will be ignored for the remainder of this exposition.\\

For topological Morse functions, the definition of stable and unstable manifolds of critical points also requires some discussion. This is given in \cite{JustVCD} and \cite{Morse}. It will be assumed here that these have been defined to have analogous properties to the stable and unstable manifolds in the smooth case. In particular, the unstable manifold of a critical point $p$ is a cell in $\mathcal{P}_{g}^{X}$, with boundary consisting of a union of unstable manifolds of critical points of larger index. Details of pathological cases that can arise are given in \cite{JustVCD}.\\
 
\textbf{What is a dual?} To begin with, let $p\in \mathrm{Sys}(C)$ be a critical point satisfying the assumptions of Theorem \ref{neededforduality}. It was shown in \cite{MS} that $\mathcal{P}_{g}$ has a tangent space at $p$, and under the assumptions of Theorem \ref{neededforduality} the index of $p$ is equal to the codimension of this tangent space. This definition is $\Gamma_{g}$-equivariant, and by Theorems \ref{Akroutthm} and \ref{MSthm}, the dimension of $\mathrm{Min}(C)$ is equal to the codimension of $\mathcal{P}_{g}$ at $p$. It follows that, by Theorem \ref{neededforduality}, $\mathrm{Min}(C)$ has all the properties one would expect of a dual intersecting $\mathcal{P}_{g}$ in the point $p$.\\

Let $p$ be a critical point of $f_{\mathrm{sys}}$ in $\mathcal{P}_{g}^{X}$ for which the only vectors in $\mathcal{V}(p)\cap T_{p}\mathrm{Min}(C)$ in cells labelled by filling sets of curves are in the tangent cone to $\mathcal{P}_{g}\setminus \mathcal{P}_{g}^{X}$. In this case, $\mathrm{Min}(C)$ will also be called a dual to $\mathcal{P}_{g}^{X}$, even though $\mathrm{Min}(C)$ can have nonempty boundary. Since the sets of minima representing the boundary of $\mathrm{Min}(C)$ are disjoint from $\mathcal{P}_{g}^{X}$, the corresponding cells are homotopic to cells in the thin part of $\mathcal{T}_{g}$. Such homotopies are constructed by reversing the flow in the construction of the deformation retraction of $\mathcal{T}_{g}$ onto $\mathcal{P}_{g}^{X}$. Such homotopies leave the thin part of a cell invariant and restrict to homotopies of cells in $\mathcal{T}_{g}\setminus \mathcal{P}_{g}^{X}$. Alternatively, the dual can be constructed by gluing 1-sided duals onto any boundary components of $\mathrm{Min}(C)$, as explained below.\\

When the dual $\mathrm{Min}(C)$ to $\mathcal{P}_{g}^{X}$ at $p$ has nonempty boundary, it is also possible that $C$-regularity breaks down at $p$. In this case, the dual is not uniquely defined; it is also possible to use $\mathrm{Min}(C')$ in place of $\mathrm{Min}(C)$, where $C'\subset C$ is chosen such that $p\in \mathrm{Min}(C')$ and $\mathrm{Span}\{\nabla L(c)(p)\ |\ c\in C\}=\mathrm{Span}\{\nabla L(c)(p)\ |\ c\in C'\}$. This is illustrated schematically in Figure \ref{3possibilities}. The topological properties of duals of interest here depend only on whether the intersection with $\partial\mathcal{T}_{g}^{\epsilon_{M}}$ represents a nontrivial homology class in $\mathcal{C}_{g}^{\circ}$; this is independent of the choices, and does not require $\mathrm{Min}(C)$ to be a cell.  \\ 

\begin{figure}
\centering
\includegraphics[width=7cm]{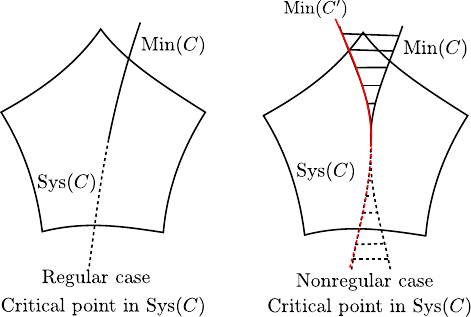}
\caption{A schematic representation of the different ways $\mathrm{Min}(C)$ can intersect $\mathrm{Sys}(C)$ at a critical point in a locally top-dimensional cell of $\mathcal{P}^{X}_{g}$. A set of minima $\mathrm{Min}(C')$ with $C'\subset C$ is shown in red.}
\label{3possibilities}
\end{figure}

\textbf{Duals and the embedding of Harvey's complex of curves.} Harvey's curve complex $\mathcal{C}_{g}$ has the homotopy type of an infinite wedge of spheres of dimension $2g-2$, \cite{Harer}. Since $\mathcal{C}_{g}$ is homotopy equivalent to $\partial\mathcal{T}_{g}^{\epsilon_{M}}$, \cite{Ivanov}, a dual cell of dimension less than $2g-1$ can always be homotoped into the thin part of $\mathcal{T}_{g}$ by a homotopy that fixes points in the intersection with the thin part of $\mathcal{T}_{g}$. As explained in \cite{JustVCD}, duals to cells of $\mathcal{P}^{X}_{g}$ are used in the construction of a $\Gamma_{g}$-equivariant deformation retraction of $\mathcal{P}^{X}_{g}$. This is done as follows:  Denote by $\mathcal{B}(p)$ a dual cell intersecting $\mathcal{P}^{X}_{g}$ in a point $p$ in a locally top-dimensional cell. If $\mathcal{B}(p)\cap \mathcal{T}_{g}^{\epsilon_{M}}$ can be homotoped into the thin part of $\mathcal{T}_{g}$ by a homotopy that keeps points in the thin part of $\mathcal{T}_{g}$ in the thin part, this homotopy moves $\mathcal{B}(p)$ off $\mathcal{P}^{X}_{g}$. This implies that $\mathcal{P}^{X}_{g}$ has nonempty boundary. This boundary is the starting point of a deformation retraction. $\Gamma_{g}$-equivariance of the deformation retraction is achieved by using the structure of the level sets of $f_{\mathrm{sys}}$ on the unstable manifolds to define the deformation retraction. Boundary cells will be assigned 1-sided duals as explained below, which are glued together to construct duals of locally top-dimensional cells as they appear in the deformation retraction.\\

\begin{figure}
\centering
\includegraphics[width=0.4\textwidth]{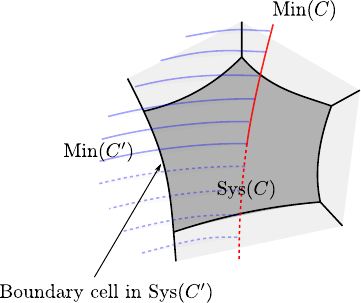}
\caption{A 1-sided dual to a boundary cell is indicated with blue stripes.}
\label{1-sided}
\end{figure}

Throughout this section, in pathological cases, the duals constructed could have quite complicated geometry, might not be submanifolds and might depend on choices. However, all that really matters are the intersection properties with the relevant subcomplexes of $\mathcal{P}_{g}^{X}$ and whether they determine trivial or nontrivial homology classes of $H_{2g-2}(\mathcal{C}_{g}; \mathbb{Z})$. These properties do not depend on choices or geometry.\\

\textbf{1-sided duals.} It might be the case that $\mathcal{P}_{g}^{X}$ has dimension larger than the lower bound given by the virtual cohomological dimension of $\Gamma_{g}$ given by $4g-5$. If so, there is an algorithm explained in \cite{JustVCD} how to obtain a series of $\Gamma_{g}$-equivariant deformation retractions of $\mathcal{P}_{g}^{X}$ onto a complex of dimension $4g-5$. Following the steps of this construction, it will now be explained how to define duals to cells that arise after performing deformation retractions on $\mathcal{P}_{g}^{X}$, using 1-sided duals. Choose and fix a $\Gamma_{g}$-orbit of boundary cells of a locally top-dimensional unstable manifold $\mathcal{M}(p)$ of a critical point $p\in \mathrm{Sys}(C)$ of dimension $d+1$. Denote by $\mathcal{B}(p)$ a dual to $\mathcal{P}_{g}^{X}$ at $p$. As discussed above, $\mathcal{B}(p)$ is either $\mathrm{Min}(C)$ or $\mathrm{Min}(C)$ with any boundary components homotoped into the thin part of $\mathcal{T}_{g}^{\epsilon_{M}}$.\\

The boundary cell is contained in the unstable manifold in $\mathcal{P}_{g}^{X}$ of a critical point $p_{1}$ with set of systoles $C_{1}$. Here $p_{1}$ has index one more than $p$. Usually $C_{1}\subsetneq C$, and so by Lemma \ref{14}, $\mathrm{Min}(C)\subset \overline{\mathrm{Min}(C_{1})}$, as illustrated schematicaly in Figure \ref{1-sided}. The set of minima $\mathrm{Min}(C_{1})$ is called the \textit{1-sided dual} of the boundary cell at $p_{1}$. \\

When $C_{1}$ is not a subset of $C$, the 1-sided dual is constructed by gluing together two or more sets of minima, as follows: Suppose $\gamma$ is the flowline from $p$ to $p_{1}$ of the flow generated by $X$ in $\mathcal{M}(p)$. If necessary, assume the $\Gamma_{g}$-orbit of the path $\gamma$ has been perturbed slightly so $\gamma$ does not pass through any strata at points where these strata have intersection with $\mathcal{M}(p)$ of codimension more than 1 in $\mathcal{M}(p)$. Then the 1-sided dual is constructed by gluing together the sets of minima labelled by the same sets of curves as the strata, in the order that they appear along $\gamma$. This is illustrated schematically in Figure \ref{relay}. To make this work, use is made of Lemma \ref{14} and the observation that the set of systoles at a point on the boundary of two strata $\mathrm{Sys}(C')$ and $\mathrm{Sys}(C'')$ contains $C'\cup C''$. The union of sets of minima are not necessarily disjoint unions, and the 1-sided duals might not be cells.\\

\begin{figure}
\centering
\includegraphics[width=0.6\textwidth]{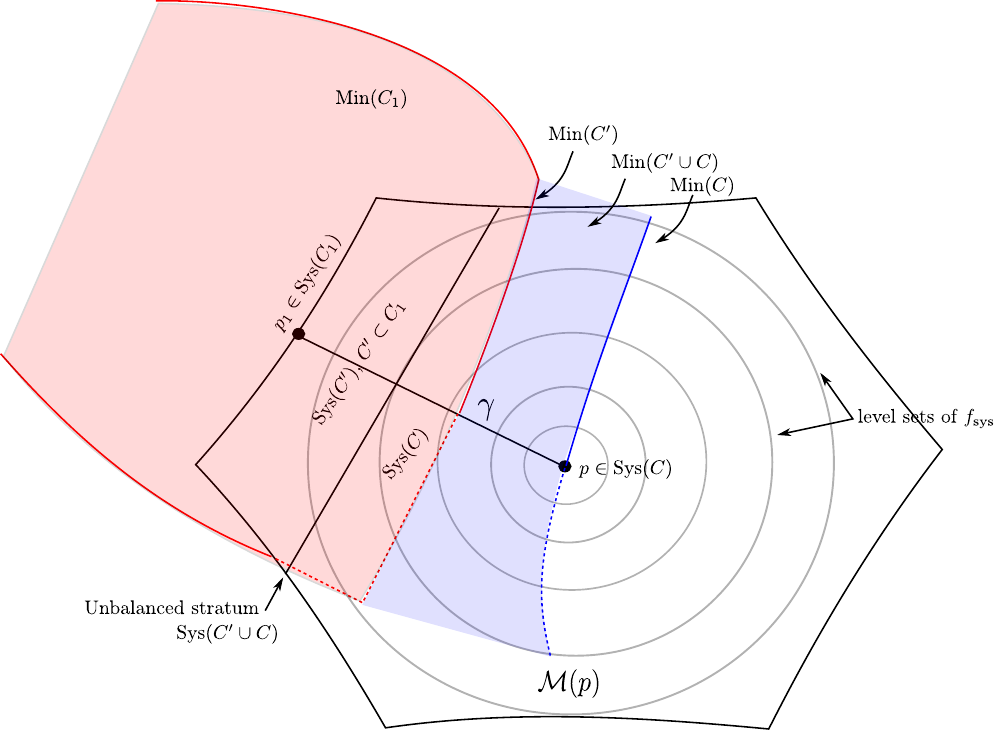}
\caption{Constructing a 1-sided dual. The path $\gamma$ from one critical point $p$ to a critical point on $\partial \mathcal{M}(p)$ with index 1 larger than the index of $p$ intersects an unbalanced stratum $\mathrm{Sys}(C'\cup C)$. The figure is intended to indicate that the shaded sets of minima intersect $\mathcal{M}(p)$ along the path $\gamma$.}
\label{relay}
\end{figure}

In order to perform the deformation retraction and to define the duals to any locally top-dimensional cells that emerge after the deformation retraction, as explained in \cite{JustVCD}, two cases are considered. The first case is that there is only one representative of the $\Gamma_{g}$-orbit of the boundary cell of the subcomplex on the boundary of the larger dimensional stable manifold, and the second case is that there is more than one such orbit. \\

\begin{figure}
\centering
\includegraphics[width=0.8\textwidth]{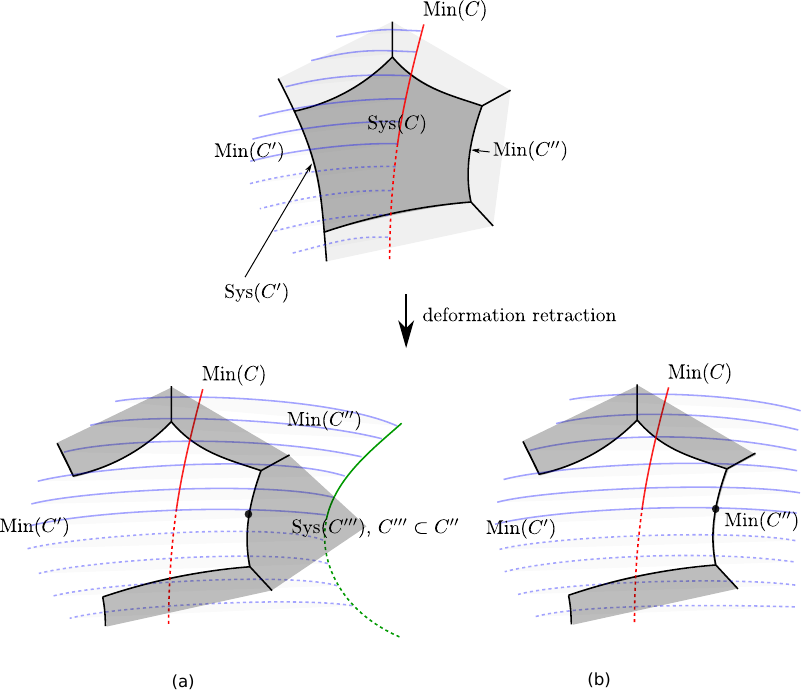}
\caption{In the case of a locally top-dimensional cell $\mathrm{Sys}(C)$ of $\mathcal{P}_{g}$ with a single boundary cell in the $\Gamma_{g}$-orbit containing $\mathrm{Sys}(C')$, part (a) illustrates the construction of a 1-sided dual intersecting the boundary in a critical point inside a boundary cell that appeared after the deformation retraction, and part (b) the construction of a dual to a locally top-dimensional cell that appeared after the deformation retraction. }
\label{simpledeformation}
\end{figure}

\textbf{Case 1. Only one representative of the $\Gamma_{g}$-orbit on the boundary of $\mathcal{M}(p)$.} In this case, the deformation retraction is obtained by deleting the $\Gamma_{g}$-orbits of $\mathcal{M}(p)$ and the boundary cell. For any boundary cells or locally top-dimensional cells that appear after this deformation retraction, the construction of duals and 1-sided duals is illustrated schematically in Figure \ref{simpledeformation}. In Figure \ref{simpledeformation} (a) it was assumed that $p\in \mathrm{Sys}(C)$, $p_{3}\in \mathrm{Sys}(C''')$ and the set of systoles at $p_{2}$ is a set $C''$ containing $C\cup C'''$. When this is not the case, the 1-sided dual through $p_{2}$ is constructed by attaching the 1-sided dual to through $p_{1}$ along $\mathrm{Min}(C)$ to a union of sets of minima. As in the construction above, this union of sets of minima correspond to the strata along the path $\gamma$, where $\gamma$ is a path in $\mathcal{P}_{g}^{X}$ obtained by concatenating the flowline from $p$ to $p_{2}$ followed by the reverse flowline from $p_{2}$ to $p_{3}$. \\

When the critical point $p_{2}$ is contained in a locally top-dimensional cell after the deformation retraction as illustrated in Figure \ref{simpledeformation} (b), the dual through $p_{2}$ is obtained by gluing the 1-sided duals through $p_{1}$ and $p_2$ along the dual through $p$ (recall this is $\mathrm{Min}(C)$). Note that $\mathrm{Min}(C)$ is contained in or on the boundary of each of the 1-sided duals.\\

\begin{figure}
\centering
\includegraphics[width=0.8\textwidth]{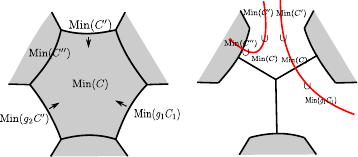}
\caption{A schematic illustration of the construction of duals and 1-sided duals, for a locally top-dimensional cell of $\mathcal{P}_{g}^{X}$ with three boundary cells in the same $\Gamma_{g}$-orbit on its boundary. The red lines indicate cross sections of duals and 1-sided duals, and the arrows on the left indicate the direction of the vector field $X$.}
\label{definingduals}
\end{figure}

\textbf{Case 2: more than one element of the $\Gamma_{g}$-orbit  on the boundary of $\mathcal{M}(p)$.} In this case, there is a finite subgroup $G(p)$ of $\Gamma_{g}$ that maps $\mathcal{M}(p)$ to itself setwise. The subgroup $G(p)$ fixes $p$, maps flaps to flaps, flowlines to flowlines, and critical points on the boundary of $\mathcal{M}(p)$ to critical points of the same index. As explained in detail in \cite{JustVCD}, this makes it possible to construct a fundamental domain $F(p)$ of the action of $G(p)$ on $\mathcal{M}(p)$. The boundary of the fundamental domain intersects the interior of $\mathcal{M}(p)$ along a set of flowlines emanating from $p$. When $G(p)$ does not have a subgroup that stabilises $p_{1}$, $F(p)$ has the boundary cell containing $p$ on its boundary. Otherwise, $\partial F(p)$ has a fundamental domain of the action of $G(p)$ on the boundary cell. This fundamental domain intersects the interior of $\mathcal{M}(p_{1})$ along a set of flowlines emanating from $p_{1}$. In the former case, the deformation retraction is obtained by deleting the interiors of the $\Gamma_{g}$-orbit of $F(p)$, as well as the $\Gamma_{g}$-orbit of the boundary cell. In the latter case, the deformation retraction is obtained by deleting the interiors of the $\Gamma_{g}$-orbit of $F(p)$, as well as the $\Gamma_{g}$-orbit of the interior of the fundamental domain of $\mathcal{M}(p_{1})$. This is followed by a further deformation retraction that collapses any remaining $\Gamma_{g}$-orbits of flowlines with an endpoint in the boundary.\\

The right hand side of Figure \ref{definingduals} indicates the construction of the duals to any locally top-dimensional cells that emerge after a deformation retraction. These locally top-dimensional cells have dimension 1  less than the dimension of $\mathcal{M}(p)$, and the duals are constructed by gluing 1-sided duals along the dual to $\mathcal{M}(p)$. The 1-sided duals through $p_{1}$ and $g_{1}p_{1}$, $g_{1}\in \Gamma_{g}$, taking $F(p)$ to an adjacent fundamental domain are glued together to form the dual of a new locally top-dimensional cell. When a new boundary cell is created by the deformation retraction, the 1-sided dual is obtained as indicated on the left of Figure \ref{definingduals}. As in earlier constructions, this might involve constructing a path $\gamma$ by concatenating two flowlines, and gluing together sets of minima defined by the sets of curves labelling the strata along $\gamma$.\\



The image $\mathcal{P}_{g,1}^{X}$ of this $\Gamma_{g}$-equivariant deformation retraction has a piecewise-smooth, $f_{\mathrm{sys}}$-increasing $\Gamma_{g}$-equivariant flow, obtained by restricting the flow on $\mathcal{P}_{g}^{X}$. It is therefore possible to iterate this construction, to obtain $\mathcal{P}_{g,2}^{X}$, $\mathcal{P}_{g,3}^{X}, \ldots$ until a $\mathcal{P}_{g,n}^{X}$ is obtained with empty boundary. \\

\begin{remark}
Theorems \ref{neededforduality} and \ref{onlyfaces} suggest that knowing the set of sets of systoles $\{C_{1}, C_{2}, \ldots\}$ at all the local maxima, $\{p_{1}, p_{2}, \ldots\}$, determines a $\Gamma_{g}$-equivariant ``pinched cell decomposition'' of $\mathcal{T}_{g}$ by the pinched cells $\{\mathrm{Min}(C_{1}), \mathrm{Min}(C_{2}), \ldots\}$. This is nearly true; as illustrated in Figure \ref{relay}, it is also necessary to know the systoles on the unbalanced strata in $\mathcal{P}_{g}$. The sets of systoles on such unbalanced strata give sets of minima that act as connector pieces between the sets of minima $\{\mathrm{Min}(C_{1}), \mathrm{Min}(C_{2}), \ldots\}$. The author is not aware of any examples of unbalanced strata in $\mathcal{P}_{g}$, although they presumably exist. An algorithm is given in \cite{SchmutzVoronoi} for calculating all the local extrema of $f_{\mathrm{sys}}$ and their systoles.
\end{remark}


\bibliography{spinebib2}
\bibliographystyle{plain}
\end{document}